\tikzset{->-/.style={decoration={  markings,  mark=at position .75 with {\arrow{latex}}},postaction={decorate}}}
\tikzset{-<-/.style={decoration={  markings,  mark=at position .75 with {\arrow{latex reversed}}},postaction={decorate}}}
\def\Id{\id}
\def\opp{\mathrm{opp}}
\let\cal\mathcal
\def\cA{{\mathtt A}}
\def\cB{{\mathtt B}}
\def\cF{{\cal F}}
\def\cI{{\cal I}}
\def\cM{{\cal M}}
\def\cP{{\cal P}}
\def\cR{{\cal R}}
\def\cT{{\cal T}}
\def\cZ{{\cal Z}}
\def\pZ{{\cal Z}}
\def\eps{{\epsilon}}
\let\blb\mathbb
\def\H{{\blb M}}
\def \TT{{\blb T}}
\def \Rl{{\blb R}}
\newcommand{\weg}[1]{}
\newcommand{\se}[1]{\begin{equation*}\begin{split}#1\end{split}\end{equation*}}
\newcommand{\lvect}[1]{\overleftarrow{#1}}
\newcommand{\del}[1]{{#1}^{-1}}
\newcommand{\C}{\mathbb{C}}
\newcommand{\N}{\mathbb{N}}
\newcommand{\Z}{\mathbb{Z}}
\newcommand{\R}{\mathbb{R}}
\newcommand{\Sf}{\mathbb{S}}
\newcommand{\RZ}{\zeta}
\newcommand{\rest}{\mathrm{rest}}
\def\cLL{{\mathscr L}}
\def\ccO{{\mathscr O}}
\def\ccT{{\mathscr T}}
\def\ccF{{\mathscr F}}
\def\ccG{{\mathscr G}}
\newcommand{\genmu}{\bbmu}
\newcommand{\grp}[1]{\mathsf{#1}}
\newcommand{\vtx}[1]{*+[o][F-]{\scriptscriptstyle #1}}
\newcommand{\sqvtx}[1]{*+[F]{\scriptscriptstyle #1}}
\newcommand{\Cok}{\mathtt{Cok}}
\newcommand{\Spec}{\ensuremath{\mathsf{Spec}}}
\newcommand{\<}{\langle}
\renewcommand{\>}{\rangle}
\newcommand{\Aut}{\ensuremath{\mathsf{Aut}}}
\newcommand{\End}{\ensuremath{\mathsf{End}}}
\newcommand{\Ext}{\mathsf{Ext}}
\newtheorem{lemma}{Lemma}[section]
\newtheorem{proposition}[lemma]{Proposition}
\newtheorem{theorem}[lemma]{Theorem}
\newtheorem{corollary}[lemma]{Corollary}
\theoremstyle{definition}
\newtheorem{example}[lemma]{Example}
\newtheorem{definition}[lemma]{Definition}
\newtheorem{remark}[lemma]{Remark}
\newtheorem{aside}[lemma]{Remark}
\theoremstyle{remark}
\newcommand{\Proj}{\ensuremath{\mathsf{Proj}}}
\newcommand{\Hom}{\mathtt{Hom}}
\newcommand{\Ker}{\textrm{Ker}}
\newcommand{\Image }{\textrm{Im}}
\newcommand{\id}{\mathbf{1}}
\newcommand{\A}{\ensuremath{\mathbb{A}}}
\newcommand{\Ob}{\mathtt{Ob}\,}
\renewcommand{\H}{\mathtt{H}\,}
\newcommand{\Tw}{\mathtt{Tw}\,}
\newcommand{\Hoch}{\mathtt{HochC}}
\newcommand{\Coh}{\mathtt{Coh}\,}
\newcommand{\cJA}{\mathtt{Jac}\,}
\newcommand{\cQA}{\mathtt{Gtl}\,}
\def\cC{{\mathtt C}}
\def\Fuk{{\mathtt {Fuk}}}
\def\MF{{\mathtt {MF}}}
\def\cD{{\mathtt D}}
\def\fuk{{\mathtt {fuk}}}
\def\mf{{\mathtt {mf}}}
\def\qpol{\mathrm{Q}}
\newcommand{\ucover}[1]{\tilde{#1}^u}
\newcommand{\mirror}[1]{\mathrel{\reflectbox{$#1$}}}
\newcommand{\rect}{\mathrm{R}}
\newcommand{\smatrix}[1]{\left(\begin{smallmatrix}#1\end{smallmatrix}\right)}
\newcommand{\Mod}{\ensuremath{\mathtt{Mod}}}
\title{Noncommutative mirror symmetry for punctured surfaces}
\author{Raf Bocklandt\\With an appendix by Mohammed Abouzaid}
\address{Raf Bocklandt\\
Korteweg de Vries institute\\
University of Amsterdam (UvA)\\
Science Park 904\\ 
1098 XH Amsterdam\\ 
The Netherlands
}
\email{raf.bocklandt@gmail.com}
\begin{document}
\begin{abstract}
In \cite{Auroux} Abouzaid, Auroux, Efimov, Katzarkov and Orlov showed that the 
wrapped Fukaya Categories of punctured spheres and finite unbranched covers of punctured spheres are derived equivalent
to the categories of singularities of a superpotential on certain crepant resolutions of toric 
3 dimensional singularities. We generalize this result to other punctured
Riemann surfaces and reformulate it in terms of certain noncommutative algebras coming from dimer models. 
In particular, given any consistent dimer model we can look at a subcategory of noncommutative matrix factorizations
and show that this category is $\cA_\infty$-isomorphic to a subcategory of the wrapped Fukaya category of a 
punctured Riemann surface. The connection between the dimer model and the punctured Riemann surface
then has a nice interpretation in terms of a duality on dimer models. 
\end{abstract}

\maketitle

\section{Introduction}

Originally homological mirror symmetry was developed by Kontsevich \cite{Kontsevich} as a framework to 
explain the similarities between the symplectic geometry and algebraic geometry
of certain Calabi-Yau manifolds. More precisely its main conjecture states that for any compact Calabi-Yau manifold
with a complex structure $X$, one can find a mirror Calabi-Yau manifold $X'$ equipped with a symplectic structure
such that the derived category of coherent sheaves over $X$ is equivalent to the zeroth homology of the triangulated envelop of the split closure of
the Fukaya category
of $X'$. The latter is a category that represents the intersection theory of Lagrangian submanifolds of $X'$.
\[
 \cD^b\Coh X \sim \H^0\Tw^\pi\Fuk X'
\]
Over the years it turned out that this conjecture is part of a set of equivalences which
are much broader than the compact Calabi-Yau setting \cite{katz,HoriVafa,Abou1,AKO1,AKO2}. Removing the compactness or Calabi-Yau condition often
makes the mirror a singular object, which physicists call a Landau-Ginzburg model \cite{orlov1,orlov2}.

A Landau-Ginzburg model $(X,W)$ is a pair of a smooth space $X$ and a complex valued function $W:X\to \C$, which is called the potential.
On the algebraic side we associate to it the dg-category of matrix factorizations $\MF(X,W)$.
Its objects are diagrams $\xymatrix{P_0\ar@<.5ex>[r]&P_1\ar@<.5ex>[l]}$ where $P_i$ are vector bundles and
the composition of the maps results in multiplication with $W$. The morphisms are morphisms between these vector bundles equipped with a natural differential.

On the other hand if $X'$ is noncompact we need to tweak the notion of the Fukaya category, by imposing certain conditions on the behaviour
of the Lagrangians near infinity and using a Hamiltonian flow to adjust the intersection theory.
This gives us the notion of a wrapped Fukaya category \cite{Abou2}.

In \cite{Auroux} Abouzaid, Auroux, Efimov, Katzarkov and Orlov proved an instance of mirror symmetry between such objects.
On the symplectic side they considered a sphere with $k$ punctures and on the algebraic side they considered
a special Landau Ginzburg model on a certain toric quasiprojective noncompact Calabi Yau threefold and they proved an equivalence
between the derived wrapped Fukaya category of the former and the derived category of matrix factorizations of the latter.
They also extended these results to finite unbranched covers of punctured spheres.

In this paper we aim to generalize their result to all Riemann surfaces with $k\ge 3$ punctures.
On the algebraic side though we will not construct classical Landau-Ginzburg models but instead we will
look at noncommutative Landau-Ginzburg models \cite{quintero}. This means that we replace the commutative space $X$ with a noncommutative
Calabi-Yau algebra $A$ and the potential will be a central element. The category of matrix factorizations needs also an adjustment:
instead of vector bundles we must take projective modules.

The Calabi-Yau algebras under consideration will come from dimer models, which are certain quivers embedded in a Riemann surface.
Such algebras, known as Jacobi algebras, also have a canonical choice of potential $\ell$ coming from the faces in which the quiver splits the Riemann surface.

The result we obtain is that for any consistent dimer model $\qpol$ that has a perfect matching, we can find a full subcategory $\mf(\qpol)$ of the category of all matrix factorizations of its Jacobi algebra $\MF(\cJA \qpol,\ell)$ which is $\cA_\infty$-isomorphic to a full subcategory $\fuk(\mirror{\qpol})$ of the wrapped Fukaya category $\Fuk(S)$ of a punctured Riemann surface $S$. 
The category $\fuk(\mirror{\qpol})$ is constructed using a new dimer model $\mirror{\qpol}$, embedded in the closure of $S$ such that its vertices are the punctures.
The new dimer can be obtained explicitly from the original dimer by a process called dimer duality.

We illustrate how our viewpoint and the approach in \cite{Auroux} fit together. The simplest example gives us an equivalence between the sphere with $3$ punctures $\Sf^{\bullet 3}$
and the Landau-Ginzburg model $(\C^3, xyz)$. Commutatively this corresponds to the category of singularities of the three
standard planes in affine 3-space. 
Noncommutatively\footnote{Note that in this example the Jacobi algebra $\cJA\qpol\cong \C[X,Y,Z|$ is not noncommutative, but for all other dimer models it is.} we can see this as a dimer model on the torus with a superpotential $\ell$ coming
from the faces. 
\begin{center}
\begin{tabular}{ccc|ccc}
\multicolumn{3}{c}{Commutative version}&
\multicolumn{3}{c}{Noncommutative version}\\
$\H^0\MF(\C^3,xyz)$&$\sim$&$\H^0\Fuk(\Sf^{\bullet 3})$&
$\mf(\qpol)$&$\cong_{\infty}$&$\fuk(\mirror{\qpol})$\\
\begin{tikzpicture}[x  = {(1cm,0cm)},
                    y  = {(0cm,1cm)},
                    z  = {(-.4cm,-.4cm)},
                    color = {black}]
\tikzset{facestyle/.style={fill=lightgray,draw=red,very thin,line join=round}}
\begin{scope}[canvas is zy plane at x=0]
  \path[facestyle,opacity=.3] (-1,-1) rectangle (1,1);
\end{scope}
\begin{scope}[canvas is zx plane at y=0]
  \path[facestyle,opacity=.3] (-1,-1) rectangle (1,1);
\end{scope}
\begin{scope}[canvas is xy plane at z=0]
  \path[facestyle,opacity=.3] (-1,-1) rectangle (1,1);
\end{scope}
\draw[-latex] (-1.3,0,0) --  (1.3,0,0) node[right]{$x$};
\draw[-latex] (0,-1.3,0) -- (0,1.3,0) node[above]{$y$};
\draw[-latex] (0,0,-1.3) -- (0,0,1.3) node[below,left]{$z$};;
\end{tikzpicture}&&
\begin{tikzpicture}
\filldraw[ball color=white] (0cm,0cm) circle (1cm);
\draw (0,1) node{$\bullet$};
\draw (-.86,-.5) node{$\bullet$};
\draw (.86,-.5) node{$\bullet$};
\draw [-latex] (0,1) arc(90:210:1);
\draw [-latex] (-.86,-.5) arc(210:330:1);
\draw [-latex] (.86,-.5) arc(-30:90:1);
\draw (-.5,.86) node[above,left]{$y$};
\draw (.5,.86) node[above,right]{$x$};
\draw (0,-1) node[below]{$z$};
\end{tikzpicture}&
\begin{tikzpicture}
\draw [-latex,shorten >=5pt] (0cm,0cm) to node [rectangle,draw,fill=white,sloped,inner sep=1pt] {{\tiny x}} (2cm,0cm); 
\draw [-latex,shorten >=5pt] (0cm,2cm) to node [rectangle,draw,fill=white,sloped,inner sep=1pt] {{\tiny x}} (2cm,2cm); 
\draw [-latex,shorten >=5pt] (0cm,0cm) to node [rectangle,draw,fill=white,sloped,inner sep=1pt] {{\tiny y}} (0cm,2cm); 
\draw [-latex,shorten >=5pt] (2cm,0cm) to node [rectangle,draw,fill=white,sloped,inner sep=1pt] {{\tiny y}} (2cm,2cm); 
\draw [-latex,shorten >=5pt] (2cm,2cm) to node [rectangle,draw,fill=white,sloped,inner sep=1pt] {{\tiny z}} (0cm,0cm); 
\node at (0cm,0cm) [circle,draw,fill=white,minimum size=10pt,inner sep=1pt] {\mbox{\tiny $1$}}; 
\node at (0cm,2cm) [circle,draw,fill=white,minimum size=10pt,inner sep=1pt] {\mbox{\tiny $1$}}; 
\node at (2cm,0cm) [circle,draw,fill=white,minimum size=10pt,inner sep=1pt] {\mbox{\tiny $1$}}; 
\node at (2cm,2cm) [circle,draw,fill=white,minimum size=10pt,inner sep=1pt] {\mbox{\tiny $1$}}; 
\end{tikzpicture}
&&
\begin{tikzpicture}
\draw [-latex,shorten >=5pt] (0cm,0cm) to node [rectangle,draw,fill=white,sloped,inner sep=1pt] {{\tiny x}} (2cm,0cm); 
\draw [-latex,shorten >=5pt] (0cm,2cm) to node [rectangle,draw,fill=white,sloped,inner sep=1pt] {{\tiny y}} (2cm,2cm); 
\draw [-latex,shorten >=5pt] (0cm,0cm) to node [rectangle,draw,fill=white,sloped,inner sep=1pt] {{\tiny x}} (0cm,2cm); 
\draw [-latex,shorten >=5pt] (2cm,0cm) to node [rectangle,draw,fill=white,sloped,inner sep=1pt] {{\tiny y}} (2cm,2cm); 
\draw [-latex,shorten >=5pt] (2cm,2cm) to node [rectangle,draw,fill=white,sloped,inner sep=1pt] {{\tiny z}} (0cm,0cm); 
\node at (0cm,0cm) [circle,draw,fill=white,minimum size=10pt,inner sep=1pt] {\mbox{\tiny $2$}}; 
\node at (0cm,2cm) [circle,draw,fill=white,minimum size=10pt,inner sep=1pt] {\mbox{\tiny $1$}}; 
\node at (2cm,0cm) [circle,draw,fill=white,minimum size=10pt,inner sep=1pt] {\mbox{\tiny $1$}}; 
\node at (2cm,2cm) [circle,draw,fill=white,minimum size=10pt,inner sep=1pt] {\mbox{\tiny $3$}}; 
\end{tikzpicture}
\end{tabular}
\end{center}
On the right hand side $\qpol$ is embedded in a torus while its dual, $\mirror{\qpol}$, sits in a sphere and its 3 vertices 
are the 3 punctures in the commutative picture. 
The dual can be obtained by flipping over the clockwise faces, reversing their arrows and gluing everything back again.

By construction $\fuk(\mirror \qpol)$ will generate the full wrapped Fukaya category. On the algebra side it is not completely 
clear whether $\mf(\qpol)$ generates the full category of matrix factorizations $\MF(\cJA \qpol,\ell)$ 
because unless $\qpol$ sits inside a torus, the Jacobi algebra is not Noetherian. 

The outline of the paper is as follows: first we review some of the basics of $\cA_\infty$-structures, quivers, dimer models and some algebras associated to them.
We combine these subjects to look at a special $\cA_\infty$-structure on certain dimer models, called rectified dimers. Then we turn to both sides of mirror symmetry. 
First we show that the wrapped Fukaya category associated to any polyhedral subdivision of a Riemann-surface gives rise to one of the $\cA_\infty$-structures we considered.
Secondly we show that matrix factorizations of a consistent dimer model also give rise to such an $\cA_\infty$-structure 
and finally we tie the two sides together by constructing an explicit duality on dimer models.
We end with a discussion about the connection between the commutative results in \cite{Auroux} and
the noncommutative geometry we employed. In view of the readability of the paper, we deferred the 
proof of the classification of $\cA_\infty$-structures to an appendix.
The paper also includes a second appendix by Mohammed Abouzaid which explains
how one can simplify the construction of a wrapped fukaya category in the case of punctured Riemann surfaces.

\section{Acknowledgments}

I want to thank the anonymous referee for his helpful comments and thorough review of the earlier versions of the paper. I also want to thank Mohammed Abouzaid for helping me understand the delicate nature of the wrapped Fukaya category and providing the appendix. Finally, I want to thank Peter Jorgensen and Stefan Kolb for the seminar series on cluster categories of surfaces that inspired part of this paper.

\section{$\cA_{\infty}$-categories}

In this section we will introduce the basics of $\cA_{\infty}$-categories. For more information we refer to \cite{Keller,Konsoib2}.

An \emph{$\cA_{\infty}$-category $\cC$ with degrees in $\Z$} consists of the following data\footnote{Everything in this section can be generalized to $\cA_{\infty}$-categories with degrees in $\Z_2$ or any other cyclic group.}
\begin{itemize}
 \item a set of objects $\Ob\cC$,
 \item for each pair of objects $X,Y\in \Ob\cC$ a complex $\Z$-graded vector space $\Hom_{\cC}(X,Y)$,
 \item for each sequence of $n+1$ objects $X_0,\dots X_k\in \Ob\cC$ a multilinear map $\mu_k$ of degree $2-k$
\[
 \mu_k : \Hom_\cC(X_{1},X_{0})\otimes\cdots \otimes \Hom_\cC(X_{k},X_{k-1})\to \Hom_\cC(X_{k},X_{0})
\] 
(if there is no confusion we drop the subscript and just write $\mu$)
\end{itemize}
such that the identities
\[
 \mathrm{[M_k]}~~~\sum_{s+l+t=k}(-1)^{s+lt}\mu_{k-l+1}(\Id^{\otimes s}\otimes \mu_l \otimes \id^{\otimes t})=0
\]
hold for  all $k\ge 1$. To apply the identity to elements in the hom-spaces  we will use the Koszul sign rule:
$(\alpha\otimes \beta)(u\otimes v)= (-1)^{\deg \beta\cdot \deg u}\alpha(u)\otimes \beta(v)$.
For $k=1$, the identity becomes $\mu\mu(f)=0$, so each hom-space can be considered as a complex with $d:f\mapsto \mu(f)$.

An $\cA_{\infty}$-category is called \emph{strictly unital} if for every object $X$ there is an element $\id_X \in \Hom_\cC(X,X)$
of degree 0 such that
\begin{itemize}
 \item $\mu(a,\id_X)=a$ if $a$ is a homomorphism with source $X$ and $\mu_2(1_X,a)=a$ is a homomorphism with target $X$.
 \item $\mu(a_1,\dots,a_n)=0$ if $n\ne 2$ and one of the $a_i=\id_X$.
\end{itemize}
If $\mu_i=0$ for all $i\ne 2$ a strictly unital $\cA_\infty$-category is the same as a
$\Z_2$-graded ordinary category. If $\mu_i=0$ for all $i\ge 3$ we get a dg-category.

\begin{aside}
Just like for ordinary categories we define an $\cA_\infty$-algebra as an $\cA_\infty$-category with one object and
identify the algebra with the hom-space from this object to itself.
We can turn any $\cA_\infty$-category into an $\cA_\infty$-algebra
by taking the direct sum of all hom-spaces in the original category and extending the products multilinearly and setting
products of maps that do not concatenate zero.
\end{aside}

An \emph{$\cA_\infty$-functor} $\cF: \cA\to \cB$ between two $\cA_\infty$-categories consists
of 
\begin{itemize}
 \item a map $\cF_0: \Ob \cA \to \Ob \cB$
 \item for each sequence of $k+1$ objects $X_0,\dots X_k\in \Ob\cA$ a linear map of degree $1-k$
\[
 \cF_k : \Hom_\cC(X_{1},X_{0})\otimes\cdots \otimes \Hom_\cC(X_{k},X_{k-1})\to \Hom_\cC(\cF_0X_{k},\cF_0 X_{0}) 
\] 
(if there is no confusion we drop the subscript and just write $\cF$)
\end{itemize}
subject to the following identities
\[
[\mathrm{F_k}]~ 
\sum_r\sum_{u_1+\dots+u_r=k}(-1)^{\eps}\mu_r^\cB(\cF_{u_1}\otimes\dots\otimes \cF_{u_r}) = \sum_{s+l+t=k}(-1)^{s+lt}\cF_{k-l+1}
(\id^{\otimes s}\otimes \mu_l^\cA \otimes \id^{\otimes t})
\]
with $\eps = (r-1)(u_1-1)+(r-2)(u_2-1)+\dots+ u_{r-1}$.
An $\cA_\infty$-functor is called \emph{strict} if $\cF_i=0$ for $i>1$, it is called an \emph{isomorphism} if $\cF_0$ is a bijection and all $\cF_1$ are isomorphisms and
it is called a \emph{quasi-isomorphism} if $\cF_0$ is a bijection and the $\cF_1$ induce isomorphisms on the level of the homology of $d=\mu_1$.

\subsection{Minimal models}

An $A_{\infty}$-category is called \emph{minimal} if $\mu_1=0$. Note that unlike general $\cA_\infty$-categories, minimal
$\cA_\infty$-categories are also genuine categories if we put $f_1f_2:= \mu(f_1,f_2)$ and forget the higher $\mu$'s. This is because in this case 
$[\mathrm{M_3}]$ becomes the standard associativity identity.

An \emph{$\cA_\infty$-structure on an ordinary $\Z$-graded $\C$-linear category $\cC$} is a set of multiplications $\mu$
that turn $\cC$ into a minimal $\cA_\infty$-category  such that as an ordinary category it is identical to the category structure of $\cC$.

\begin{theorem}[Kadeishvili]\cite{Kadeishvili}
Let $\cC$ be an $A_{\infty}$-category and denote by $\H\cC$ the category with the same objects but as hom-spaces the homology
of the hom-spaces in $\cC$. There is an $\cA_{\infty}$-structure on $\H\cC$ and an object-preserving quasi-isomorphism between $\H\cC$ and $\cC$.
This $\cA_{\infty}$-structure on $\H\cC$ is unique up to object-preserving $\cA_\infty$-isomorphisms. 
\end{theorem}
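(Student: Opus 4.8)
The plan is to build the $\cA_\infty$-structure on $\H\cC$ together with the quasi-isomorphism $\cF:\H\cC\to\cC$ simultaneously, by induction on arity, using the homotopy transfer machinery. First I would fix, for each pair of objects, a choice of cycle-representative splitting: a degree-$0$ linear map $\iota:\H\Hom(X,Y)\to \Ker(\mu_1)\subseteq\Hom_\cC(X,Y)$ and a homotopy contraction, i.e. linear maps $p:\Hom_\cC(X,Y)\to\H\Hom(X,Y)$ and $h:\Hom_\cC(X,Y)\to\Hom_\cC(X,Y)$ of degree $-1$ with $p\iota=\id$, $\iota p=\id-\mu_1 h-h\mu_1$, and (after the standard adjustment) the side conditions $h\iota=0$, $ph=0$, $hh=0$. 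Such data exists over a field since every complex of vector spaces is quasi-isomorphic to its homology. Setting $\mu_1^{\H}=0$ and $\cF_1=\iota$ handles $k=1$.

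Next I would run the inductive construction. Suppose $\mu_k^{\H}$ and $\cF_k$ have been defined for all $k<n$ so that the relations $[\mathrm{M}_k]$ for $\H\cC$ and $[\mathrm{F}_k]$ hold in arities $<n$. In the arity-$n$ instance of $[\mathrm{F}_n]$, collect all terms that do not involve the yet-undefined $\mu_n^{\H}$ or $\cF_n$; call their sum (a map into $\Hom_\cC(\cF_0X_n,\cF_0X_0)$) the obstruction $\Theta_n$. The relation $[\mathrm{F}_n]$ then reads, schematically, $\mu_1\cF_n(f_1,\dots,f_n)+(-1)^?\,\iota\,\mu_n^{\H}(f_1,\dots,f_n)=\Theta_n(f_1,\dots,f_n)$. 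I would first check, using the already-established relations in lower arities and the $[\mathrm{M}_k]$ identities in $\cC$, that $\mu_1\Theta_n=0$, so $\Theta_n$ lands in cycles; this is the one genuine computation and is the crux of the argument. Granting it, I define $\mu_n^{\H}:=\pm\,p\circ\Theta_n$ (the induced map on homology, automatically with the correct degree $2-n$) and $\cF_n:=\pm\,h\circ\Theta_n$ (degree $1-n$); then $\mu_1\cF_n+(-1)^?\iota\mu_n^{\H}=(\mu_1 h+\iota p)\Theta_n=(\id-h\mu_1)\Theta_n=\Theta_n$ since $\mu_1\Theta_n=0$, which is exactly $[\mathrm{F}_n]$. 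A separate bookkeeping lemma shows that $[\mathrm{F}_{<n}]$ together with the defining formula for $\mu_n^{\H}$ forces $[\mathrm{M}_n]$ for $\H\cC$ — equivalently, one checks that $\cF$ being an $\cA_\infty$-functor into a (genuine) $\cA_\infty$-category and $\cF_1$ a quasi-isomorphism is enough to transport the relations. Strict unitality is arranged by choosing $\iota$ to send the homology class of $\id_X$ to $\id_X$ and $h(\id_X)=0$, and noting the homotopy-transfer formulas then kill higher products containing a unit.

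For uniqueness up to object-preserving $\cA_\infty$-isomorphism: suppose $(\H\cC,\mu^{\H})$ and $(\H\cC,\nu^{\H})$ are two minimal $\cA_\infty$-structures on $\H\cC$ each equipped with an object-preserving quasi-isomorphism to $\cC$. Since $\cF_1,\cG_1$ are quasi-isomorphisms between minimal categories they are isomorphisms on hom-spaces in degree-$\mu_1=0$, so after composing with $\cG$'s homotopy inverse one reduces to: two $\cA_\infty$-structures on $\H\cC$ with a quasi-isomorphism between them whose linear term is the identity. One then constructs an $\cA_\infty$-isomorphism $\Phi$ with $\Phi_0=\id$, $\Phi_1=\id$, and higher $\Phi_k$ built inductively by the same obstruction-theoretic argument: in arity $n$ the relevant component of the functor equation is $\iota\mu^{\H}_n-\iota\nu^{\H}_n=(\text{boundary terms in }\Phi_{<n})+\mu_1(\cdots)$, and since everything is minimal the $\mu_1$ terms vanish and one solves for $\Phi_n$ directly (no homotopy needed, only that the two structures agree through the inductively constructed isomorphism).

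The main obstacle is the verification that the obstruction cocycle $\Theta_n$ is $\mu_1$-closed: this is where the full force of the $[\mathrm{M}_k]$ relations in $\cC$ and all the lower-arity $[\mathrm{F}_k]$ identities must be combined, with careful sign tracking coming from the sign $\sigma=s+lt+(2-l)(\deg f_1+\dots+\deg f_k)$ in $[\mathrm{M}_k]$ and the analogous signs in $[\mathrm{F}_k]$. The cleanest way to organise this — and what I would actually write — is the bar-construction/coalgebra formulation: an $\cA_\infty$-structure is a square-zero coderivation $b$ on the tensor coalgebra, an $\cA_\infty$-functor a coalgebra morphism, and the inductive step becomes the statement that the obstruction is a cocycle for the internal differential on the relevant Hom-complex of coderivations, which is a formal consequence of $b^2=0$. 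That reduces the sign bookkeeping to a single application of the co-Leibniz rule. Everything else in the proof is then routine homological algebra over a field.
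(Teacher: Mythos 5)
Your proof is correct, but it takes a genuinely different route from the one the paper sketches. After stating Kadeishvili's theorem, the paper does not give an inductive argument at all: it fixes the contraction data $(d=\mu_1,\,h,\,\pi=1-dh-hd,\,\iota)$ and then defines $\mu^\H_n:=\sum_{\cT}m^\cT$, where the sum is over all rooted planar trees $\cT$ with $n$ leaves and $m^\cT$ is obtained by decorating leaves with $\iota$, internal nodes with $h\mu$, and the root with $\pi\mu$. This is the explicit closed-form homotopy-transfer formula of Kontsevich--Soibelman \cite{Konsoib1}, and the paper essentially cites the verification that it satisfies the $\cA_\infty$-relations rather than redoing it. You instead build $\mu^\H_n$ and $\cF_n$ simultaneously by induction on arity, solving the obstruction equation $\mu_1\cF_n+\iota\mu^\H_n=\Theta_n$ once you know $\mu_1\Theta_n=0$; this is closer to Kadeishvili's original argument. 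The trade-off is real: the tree-sum gives an explicit formula for each product, which the paper uses heavily downstream (all the concrete computations of $\mu^\H$ for gentle and Fukaya categories are done by enumerating trees), whereas the inductive construction leaves each $\mu^\H_n$ implicit but makes the uniqueness statement (which the paper asserts without comment) fall out of the same obstruction-theoretic machinery. Both are standard and the two constructions agree up to $\cA_\infty$-isomorphism by the very uniqueness you prove. One small caution about your write-up: the side conditions $h\iota=0$, $ph=0$, $h^2=0$ that you ``adjust'' for at the start are exactly what the paper encodes in $h^2=0$, $dhd=d$; without them the two constructions need not agree on the nose, only up to isomorphism, so if you later want to compare with the paper's explicit tree formula you should keep them.
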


How do we construct this new $\cA_{\infty}$-structure? In order to do this we use a graphical method from \cite{Konsoib1}.
Set $d=\mu_1$ and for each $\Hom_{\cC}(X,Y)$ choose a map $h$ of degree $-1$ on $A$ such that
\[
 h^2=0 \text{ and } dhd=d.
\]
We will call this map a codifferential.
The map $\pi := 1- dh- hd$ is a projection and we can identify $\Image  \pi$ with $\Hom_{\H\cC}(X,Y)$ because $d\pi=\pi d=0$ and
if $dx=0$ then $\pi x= x - d(hdx)$. Let $\iota$ be the embedding $\Hom_{\H\cC}(X,Y)=\Image  \pi \subset \Hom_{\cC}(X,Y)$.

Given a rooted tree $\cT$ with $k+1$ leaves we can define a multilinear map 
\[
 m^\cT : \H\Hom_\cC(X_{1},X_{0})\otimes\cdots \otimes \H\Hom_\cC(X_{k},X_{k-1})\to \H\Hom_\cC(\cF_0X_{k},\cF_0 X_{0})
\]
by interpreting every leaf as the map $\iota$, every internal node as a map $h\mu$ and
the root as $\pi\mu$. 
\[
\vcenter{
 \xymatrix@C=.1cm@R=.35cm{
  \vtx{\iota}\ar[rd]&&\vtx{\iota}\ar[ld]&&\vtx{\iota}\ar[rrd]&&\vtx{\iota}\ar[d]&&\vtx{\iota}\ar[lld]\\
    &\vtx{h\mu}\ar[rrrd]&&&&&\vtx{h\mu}\ar[lld]&&\\
    &&&&\vtx{\pi\mu}&&& 
 }}
\implies m^\cT(f_1,\dots, f_5)=
\pi \mu(h\mu(f_1,f_2), h\mu(f_3,f_4,f_5)).
\]
The new multiplication $\mu_n^{\H}$ is then defined as the sum $\sum_\cT m^\cT$ over all rooted trees with 
$n$ leaves for $n>1$ and $\mu_1^{\H}=0$.

\subsection{Completion}

Given an $\cA_\infty$-category $\cC$ we define a \emph{twisted object} \cite{Keller} as 
a pair $(M,\delta)$, where $M\in \N[\Ob\cC\times \Z]$ is a formal sum of objects shifted by elements in $\Z$ (or $\Z_2$ if $\cC$ is only $\Z_2$-graded).
We will write such a sum as $v_1[i_1] \oplus \dots \oplus v_k[i_k]$ where the $v_j$ are objects and the $i_j$ shifts.
The map $\delta$ is an upper-triangular $k\times k$-matrix with entries $\delta_{st}\in \Hom_{\cC}(v_{i_t},v_{i_s})$ 
of degree $i_t-i_s+1$ and subject to the identity
\[
 \sum_{n=1}^{\infty}(-1)^{\frac {n(n-1)}2}\mu_n(\delta,\dots,\delta)=0
\]
where we extended $\mu_n$ to matrices in the standard way.

The homomorphism space between two such objects $(M,\delta)$ and $(M',\delta')$ is given by
\[
 \bigoplus_{r,s} \Hom(v_r, v'_s)[i_r-i_s]
\]
which we equip with an $\cA_{\infty}$-structure as follows:
\[
 \mu(f_1,\dots,f_n) := \sum_{t=0}^{\infty}\sum_{i_0+\dots+i_n=t} \pm \mu(\underbrace{\delta,\dots,\delta}_{i_0},f_1,\underbrace{\delta,\dots,\delta}_{i_1},\dots,f_n,\underbrace{\delta,\dots,\delta}_{i_n}).
\]
The $\pm$-sign is calculated by multiplying with a factor $(-1)^{n+t-k}$ for each $\delta$ in the expression on position $k$.

The $\cA_\infty$-category of twisted objects and their homomorphism spaces is denoted by $\Tw \cC$.
It also has a minimal model, which we denote by $\H\Tw \cC$. Note that because it is a minimal model,
$\H\Tw \cC$ is a genuine category.

\begin{aside}
If $\cA$ is a genuine $\C$-linear category with a finite number of objects, such that the $\Hom_A(X,Y)$ are finite dimensional 
and contain no non-trivial idempotents
then we can consider $\cA$ as a path algebra of a quiver with relations.
We can construct the derived category $\cD\Mod \cA$ and look at the smallest
triangulated subcategory generated by $\cA$ as a module over itself.  We can construct a $\Z$-graded category from this
by putting $\Hom(X,Y) := \oplus_{i \in \Z}\Hom_{\cD\Mod \cA}(X,Y[i])$.

On the other hand we can view $\cA$ as an $\cA_\infty$-category with degrees in $\Z$, by putting all of $\Hom_{\cA}(X,Y)$ in 
degree $0$. It makes sense to look at $\H\Tw \cA$ and it turns out that this category is equivalent to the category we defined above (see  \cite{Keller}).
In this light $\H\Tw\cC$ can be seen as a useful generalization of the derived category of an algebra.
\end{aside}

\section{Embedded quivers and Dimers}

\subsection{Quivers}
As usual a \emph{quiver} $Q$ is a finite (or locally finite) oriented graph. We denote the set
of vertices by $Q_0$, the set of arrows by $Q_1$ and the maps $h,t$
assign to each arrow its head and tail.
A \emph{nontrivial path} $p$ of length $k$ is a sequence of arrows $a_{k-1}\cdots a_0$
such that $t(a_i)=h(a_{i+1})$. We write $p[i]$ to denote the arrow $a_i$ and we 
set $h(p)=h(p[k-1])$ and $t(p)=t(p[0])$.
 
A \emph{trivial path} is just a vertex.  
A path $p$ is called \emph{cyclic} if $h(p)=t(p)$ and the equivalence class of a cyclic path under
cyclic permutation is called a \emph{cycle}.

The \emph{path category} $\C Q$ is the category with as
objects the vertices, as homomorphisms linear combinations of paths and as composition
concatenation. We will concatenate our paths from right to left: $pq = \xymatrix{\vtx{}&\vtx{}\ar[l]|p&\vtx{}\ar[l]|q}$.
Every vertex $v$ corresponds to an object $v$ but it can also be seen as the trivial path, which is the identity
morphism $\id_v$ on $v$. If there is no confusion we will also use the notation $v$ to denote $\id_v$.
The path category can also be considered as an algebra $\C Q$ by taking the direct sum of all hom-spaces. In this way the vertices
become idempotents and we can recover $\Hom_{\C Q}(v,w)$ as $w\C Q v$. 

Given a quiver $Q$, one can construct its double, $\bar Q$, which has the same number of vertices
but for every arrow $a \in Q_1$ we add an extra arrow $a^{-1}$ with $h(a)=t(a^{-1})$ and $t(a)=h(a^{-1})$.
The \emph{weak path category} of $Q$ is the following quotient
\[
 \C \hat Q := \frac{\C \bar Q}{\<aa^{-1}=h(a), a^{-1}a=t(a)|a \in Q_0\>} 
\]
A \emph{weak path in $Q$} is a path in $\bar Q$, viewed as a homomorphism in the weak path category.
We also speak of weak arrows and cycles and if we want to stress that a path or cycle is not weak we will call it real. 

A quiver is called \emph{embedded in a surface $S$}, if $Q_0$ is a discrete subset of a smooth surface $S$ without boundary
and every arrow is a smooth embedding $a : [0,1] \to S$ such that $h(a)=a(1)$, $t(a)=a(0)$ and
different arrows only intersect in end points. We identify $a^{-1}$ with the map $a$ in reverse direction.
We will sometimes denote the surface in which the quiver embeds by $|Q|$.
We do not require $|Q|$ to be compact, but we will exclude surfaces with boundary. 

We say that an embedded quiver $Q$ \emph{splits a compact surface $S$} if 
the complement of the quiver consists of a disjoint union of open discs and none of the arrows is a contractible loop.
Each of these discs is bounded by a weak path $c$ of length at least $2$ that goes around it in counter-clockwise direction. 
The closure of the discs are called the faces of the embedded quiver,
the counter-clockwise cycles on the boundary are called boundary cycles and we collect these cycles in 
a set $\hat Q_2^+$.

\subsection{Covers and group actions}
A cover map between two embedded quivers $Q\subset S$ and $Q' \subset S'$ is an orientation-preserving unbranched cover map
$\pi:S\to S'$ such that $Q_0 = \pi^{-1}(Q'_0)$ and $Q_1$ is the set of all lifts of arrows in $Q_1'$. Given any unbranched cover map $S\to S'$ we
can reconstruct $Q$ out of $Q'$, so it also makes sense to speak of the universal cover of an embedded quiver.

To any cover map we can associate the group of deck transformations $\grp G:= \Aut(S\to S')$ consisting of all diffeomorphisms $\phi:S\to S$
such that $\pi\circ \phi=\pi$. 
Each deck transformation acts on the embedded quiver $Q$ and if the group of deck transformatins is finite the orbits of the arrows and vertices are all of the same size
because a deck transformation has no fixed points.
This gives an action of a finite group $G$ onto $\C Q$ and we can identify $\C Q'$ with the invariant subalgebra $\C Q^{\grp G}$ by the following 
embedding
\[
\C Q' \to \C Q : \pi(p) \to \sum_{g \in \grp G}p^g
\]
where $p$ is any path in $\C Q$.

We are interested in constructions that associate to certain embedded quivers an algebra $\cB(Q)$ which
is the quotient of the path algebra equipped with an extra $\cA_\infty$-structure. We will
say that such a construction is \emph{compatible with covers} if for every finite cover $S\to S'$ the action 
of the group of deck transformations
on $\C Q$ projects down to $\cB(Q)$. Moreover we want this action to be equivariant for the $\cA_\infty$-structure 
$(g\circ \mu_k = \mu_k\circ g^{\otimes k})$ and $\cB(Q)^{\grp G}\cong \cB(Q')$.

Note that a construction $\C Q/\cI$ without additional $\cA_\infty$-structure 
is compatible with covers if $g\cI=\cI$ for any cover $Q\to Q'$,
$\cI$ is generated by all lifts of $\cI'\subset \C Q'$. 

\subsection{Dimers}

A \emph{dimer model} \cite{HanKen,FranHan,HanHer} is a quiver that splits a \emph{compact orientable surface} and
for which every boundary cycle \emph{has length at least 3} and \emph{is either real or the inverse of a real cycle}.
The real cycles that are boundary cycles are called the positive cycles and are grouped in the set $Q_2^+:= \hat Q_2^+ \cap \C Q$.
Negative cycles are those for which the inverse is a boundary cycle and we set $Q_2^-:=(\hat Q_2^+)^{-1} \cap \C Q$.
Note that the orientability of the surface implies that every arrow will be contained in one positive cycle 
and one negative.

\begin{example}\label{dimex}
We give 4 examples of embedded quivers that split a surface. The first 3 are embedded in a torus, 
the last in a genus 2 surface.
Arrows and vertices with the same label are identified.
\[
\xymatrix@C=.75cm@R=.75cm{
\vtx{1}\ar[rr]_a\ar[dd]^b\ar[dr]&&\vtx{1}\ar[dd]_b\ar[dl]\\
&\vtx{2}&\\
\vtx{1}\ar[rr]^a\ar[ur]&&\vtx{1}\ar[ul]
}
\hspace{.5cm}
\xymatrix@C=.75cm@R=.75cm{
\vtx{1}\ar[r]_{a}&\vtx{2}\ar[d]&\vtx{1}\ar[l]^b\\
\vtx{3}\ar[u]_c\ar[d]^d&\vtx{4}\ar[l]\ar[r]&\vtx{3}\ar[u]^c\ar[d]_d\\
\vtx{1}\ar[r]^{a}&\vtx{2}\ar[u]&\vtx{1}\ar[l]_b
}
\hspace{.5cm}
\xymatrix@C=.4cm@R=.75cm{
\vtx{1}\ar[rrr]_a\ar[dr]&&&\vtx{1}\ar[ld]|x\\
&\vtx{3}\ar[r]\ar[ld]|y&\vtx{2}\ar[ull]\ar[dr]&\\
\vtx{1}\ar[rrr]^a\ar[uu]_b&&&\vtx{1}\ar[ull]\ar[uu]^b
}
\hspace{.5cm}
\xymatrix@C=.75cm@R=.75cm{
\vtx{1}\ar[r]_{a}\ar[d]^{b}&\vtx{1}\ar[r]_b&\vtx{1}\ar[d]_c\\
\vtx{1}\ar[d]^a&&\vtx{1}\ar[d]_d\\
\vtx{1}\ar[r]^{d}&\vtx{1}\ar[r]^c&\vtx{1}\ar[uull]|x
}
\]
The last 3 are dimer models, the first one is not because the faces are not bounded by cycles. 
\end{example}

The \emph{Jacobi category of a dimer model} is the quotient of the path category by the ideal generated by relations
of the form $r_a := r_+-r_-$ where $r_+a \in \qpol_2^+$ and $r_-a\in \qpol_2^-$ for some arrow $a\in \qpol_1$:
\[
 \cJA(\qpol) := \frac{\C \qpol}{\<r_a| a \in \qpol_1\>}.
\]
For the last dimer model in example \ref{dimex} we have $\qpol_2^+=\{abxcd\}$ and $\qpol_2^-=\{baxdc\}$ and
\[
 \cJA(\qpol) := \frac{\C \<a,b,c,d,x\>}{\<bxcd-xdcb, xcda-axdc, cdab-dcba, dabx -baxd, abxc-cbax\>}.
\]
Note that because we demanded that the positive and negative cycles have at least $3$ arrows, all
terms $r^+$, $r^-$ in the relations are at least quadratic. This ensures that we can recover the quiver from
$\cJA(\qpol)$.

\begin{remark}
It is clear from the definition that the Jacobi category construction is compatible with covers because
the positive and negative cycles in the cover are precisely the lifts of positive and negative cycles.
Therefore the relations in the cover are precisely the lifts of the relations in the original dimer.
\end{remark}

\begin{aside}
The reason these Jacobi algebras are important is because they appear as noncommutative analogues
of Calabi-Yau manifolds. A compact 3-Calabi-Yau manifold can be defined as a smooth variety $X$ which has the following duality
\[
\Ext_X^i(\ccF, \ccG) = \Ext_X^{3-i}(\ccG,\ccF)^*
\]
for all coherent sheafs $\ccF$, $\ccG$ on $X$.

Similarly a 3-Calabi-Yau algebra \cite{Ginzburg} can be defined such that it has a similar duality
\[
\Ext_X^i(M, N) = \Ext_X^{3-i}(N,M)^*
\]
for all finite dimensional left $A$-modules.
\end{aside}

\subsection{Consistency}
Not every Jacobi algebra will be Calabi-Yau, this will depend on the structure of the dimer model.
To characterize such dimer models we introduce a notion of consistency. Several different notions are available in the literature \cite{gulotta,broomhead,davison,MR,IUcons,Bocklandtcons}
but we will restrict to one: zigzag consistency.

Fix a dimer model $\qpol$ and its universal cover $\ucover\qpol$, which is again a dimer. For any arrow $\tilde a\in \tilde \qpol_1$ 
we can construct its \emph{zig ray} $\pZ_{\tilde a}^+$. This is an infinite path
\[
\dots \tilde a_2\tilde a_1\tilde a_0
\]
such that $\tilde a_0=\tilde a$ and $\tilde a_{i+1}\tilde a_{i}$ sits in a positive cycle if $i$ is even and in a negative cycle if $i$ is odd.
Similarly the \emph{zag ray} $\pZ_{\tilde a}^-$ is the path where  $\tilde a_{i+1}\tilde a_{i}$ sits in a positive cycle if $i$ is odd and in a negative cycle if $i$ is even.
The projection of a zig or a zag ray down to $Q$ will give us a cyclic path if $Q$ is finite. Such a cyclic
path will be called a \emph{zigzag cycle}.
A dimer model is called \emph{zigzag consistent} if for every arrow $\tilde a$ the zig and the zag ray only meet in $\tilde a$:
\[
 \pZ_{\tilde a}^-[i]= \pZ_{\tilde a}^+[j] \implies i=j=0.
\]

In example \ref{dimex} the second and fourth quiver are zigzag consistent.
The third quiver is not zigzag consistent 
because $\pZ_{\tilde x}^-[3]= \pZ_{\tilde x}^+[3]=\tilde y$.
Note that a dimer model on a sphere is never zigzag consistent as its universal cover is finite. 

\begin{theorem}
If a dimer model is zigzag consistent then its Jacobi Algebra is 3-Calabi-Yau.
\end{theorem}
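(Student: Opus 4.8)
The plan is to exhibit, for $A:=\cJA(\qpol)$, an explicit self-dual resolution of $A$ by finitely generated projective $A$-bimodules of length $3$; together with the self-duality this is exactly the $3$-Calabi-Yau condition of the aside above. Let $V=\C\qpol_1$, let all tensor products below be taken over the vertex ring $\C\qpol_0$, and let $\ell=\sum_{c\in\qpol_2^+}c-\sum_{c\in\qpol_2^-}c$ be the superpotential coming from the faces, so that the defining relations are $r_a=\partial_a\ell$. Following Ginzburg \cite{Ginzburg} one forms the complex of projective $A$-bimodules
\[
P_\bullet\colon\quad 0\to A\otimes A\xrightarrow{\,d_3\,} A\otimes V^{*}\otimes A\xrightarrow{\,d_2\,} A\otimes V\otimes A\xrightarrow{\,d_1\,} A\otimes A\xrightarrow{\,\mu\,} A\to 0,
\]
where $\mu$ is multiplication, $d_1(1\otimes a\otimes 1)=a\otimes 1-1\otimes a$, $d_2$ is the ``double derivative'' of $\ell$ encoding the relations $r_a$, and $d_3(1\otimes 1)=\sum_{a\in\qpol_1}(a\otimes a^{*}\otimes 1-1\otimes a^{*}\otimes a)$.

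I would first dispose of the formal part, which holds for any dimer model. Because every arrow lies in exactly one positive and one negative boundary cycle and all terms of $\ell$ have the same length, $P_\bullet$ is cyclically symmetric: applying $\Hom_{A^{e}}(-,A^{e})$ (with $A^{e}=A\otimes_{\C}A^{\opp}$) and reversing the order of the terms returns $P_\bullet$ up to the usual signs, so $P_\bullet$ is self-dual under the shift $\bullet\mapsto 3-\bullet$. Moreover $\mu$ is surjective, the relations $\mu d_1=d_1d_2=d_2d_3=0$ follow directly from the definitions of $\ell$ and of the ideal $\langle r_a\rangle$, and $\coker d_1\cong A$. Hence the whole theorem reduces to a single assertion: $P_\bullet$ is \emph{exact}, i.e.\ it is a genuine bimodule resolution of $A$.

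Proving exactness is where zigzag consistency enters, and it is the main obstacle. The strategy is to reduce it to showing, for each vertex $v$, that the complex of left modules $P_\bullet\otimes_A S_v$ resolves the simple module $S_v$ (and the symmetric statement for right modules), which turns everything into a question about paths in $A$ starting at $v$; when $A$ fails to be Noetherian this reduction must be carried out with the finite complex $P_\bullet$ itself rather than with its syzygies. The key input distilled from zigzag consistency is a \emph{cancellation property} for $A$: if $pa$ and $qa$ (resp.\ $ap$ and $aq$) agree in $A$ for some arrow $a$, then $p=q$ in $A$; equivalently, two paths with the same endpoints coincide in $A$ exactly when one is obtained from the other by repeatedly swapping a subpath $r_+$ for $r_-$, and the condition $\pZ_{\tilde a}^-[i]=\pZ_{\tilde a}^+[j]\Rightarrow i=j=0$ guarantees that on the universal cover $\tilde\qpol$ this rewriting has no hidden degeneracies that would collapse $P_\bullet$. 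Granting cancellation, one builds an explicit contracting homotopy on $P_\bullet\otimes_A S_v$: a path is split as an arrow tensored with its complementary subpath in the $V$-term, this is then pushed through the relations into the $V^{*}$-term, and finally into the top term; the identity $dh+hd=1$ is verified by bookkeeping with signs.

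Thus the real work is entirely combinatorial: establishing the cancellation property and assembling the homotopy from zigzag consistency. I would organise it locally on $\tilde\qpol$. Around each lift $\tilde a$, the positive and negative cycles through $\tilde a$ together with the zig ray $\pZ_{\tilde a}^+$ and the zag ray $\pZ_{\tilde a}^-$ bound a region of the plane; zigzag consistency says these two rays meet only at $\tilde a$, which is precisely what makes the homotopy well defined on that region and forces every path identity in $A$ to be ``visible'' in it. Patching the local homotopies over $\qpol$ and descending yields the global contracting homotopy. Once $P_\bullet$ is known to be exact it is a finite resolution of $A$ by finitely generated projective bimodules, so $A$ is homologically smooth, and its built-in self-duality upgrades this to the duality $\Ext^i_A(M,N)\cong\Ext^{3-i}_A(N,M)^{*}$ for all finite-dimensional $A$-modules $M,N$, i.e.\ $A$ is $3$-Calabi-Yau.
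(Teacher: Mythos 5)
The paper itself does not prove this statement; it quotes it from \cite{Bocklandtcons}, so there is no in-text proof to compare against. Your overall strategy --- write down Ginzburg's self-dual length-$3$ complex $P_\bullet$ of projective $A$-bimodules attached to the quiver with superpotential, observe that self-duality and the boundary identities $\mu d_1=d_1d_2=d_2d_3=0$ are formal, and then use the consistency hypothesis only to prove exactness --- is the standard and essentially correct route, and it matches the shape of the argument in the cited reference and in the related work of Broomhead, Davison and Mozgovoy--Reineke.

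Two concrete issues. First, your justification of self-duality is off: you assert that the complex is cyclically symmetric ``because every arrow lies in exactly one positive and one negative boundary cycle and all terms of $\ell$ have the same length.'' In a general dimer model the positive and negative faces do \emph{not} all have the same length. Self-duality of $P_\bullet$ needs only that the superpotential is a cyclically invariant sum of cycles, which is automatic, so the conclusion survives but the stated reason does not. Second, and more importantly, the step that actually carries the theorem --- exactness of $P_\bullet$ --- is left as a gesture. You correctly isolate the cancellation property of $A$ (left/right cancellativity of arrows) as the key intermediate and correctly note that one must work with the finite complex $P_\bullet$ itself because $A$ may fail to be Noetherian off the torus. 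But the derivation of cancellation from zigzag consistency is itself a substantial combinatorial lemma (one must show, in the universal cover, that any two homotopic paths between fixed endpoints are connected by F-term moves with a unique minimal representative, and that the zig/zag rays meeting only at $\tilde a$ rules out degenerate identifications), and neither this nor the construction of the contracting homotopy with the verification $dh+hd=1$ is actually carried out. These are precisely the parts that occupy the bulk of \cite{Bocklandtcons}; as written your proposal is a correct outline with the central technical step missing.
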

\begin{remark}\label{historyconsistent}
This result follows from the culmination of work by many people.
There are several notions of consistency for dimer models, the most basic being the
cancellation property. This property states that $\cJA(\qpol)$ embeds in the weak Jacobi algebra $\C\hat \qpol/\<r_a| a \in \qpol_1\>$.
In \cite{MR} Mozgovoy and Reineke showed that if a dimer on an aspherical surface
satisfies the cancellation property and a second technical condition, then its Jacobi algebra is 3-Calabi-Yau.
In \cite{davison} Davison proved that the cancellation property alone implies the Calabi-Yau property and
that this second technical condition follows from the cancellation property. In \cite{Bocklandtcons} it is
shown that zigzag consistency implies the cancellation property. 
Similar results are also obtained by Ishii and Ueda in \cite{IU} and \cite{IUcons}.

In \cite{broomhead} Broomhead introduces the notion of geometric consistency and algebraic consistency and proves
the former implies the latter, which then implies the Calabi-Yau property. Geometric consistency is a stronger condition
than cancellation as it only applies to genus 1-surfaces and there are also examples of zigzag consistent dimers on the torus
that are not geometrically consistent. It is also equivalent to the notion of properly ordered dimers introduced by Gulotta in \cite{gulotta}. 
Geometric consistency can be relaxed to the notion of $R$-charge consistency: the existence of
a $\R_{>0}$-grading $\cR$ such that for every cycle $c \in \qpol_2$ $\sum_{a\in c}\cR_a=2$ and every vertex $v$
$\sum_{h(a)=v}(1-\cR_a)+ \sum_{t(a)=v}(1-\cR_a)=0$. In \cite{Bocklandtcons} it is shown that $\cR$-charge consistency is equivalent
to zigzag consistency on a torus. 
\end{remark}

\section{Rectified quivers and gentle categories}\label{rect}

\subsection{Rectification}
Given a quiver $\qpol$ that splits a surface $S$, we construct a new embedded quiver in the same surface, 
which we call \emph{the rectified quiver} $\rect \qpol$.
\begin{itemize}
 \item The vertices of $\rect \qpol$ are the centres of the arrows in $\qpol$
\[
\rect \qpol_0 := \{v_a := a(1/2)| a\in Q_1\} .
\]
 \item The arrows of $\rect{\qpol}$ are segments $\alpha = \lvect{v_av_b}$ connecting the centres of two (weak) arrows that follow each other 
counterclockwise in one of the boundary cycles (for both clockwise and anticlockwise boundary cycles). 
\end{itemize}
Each of the boundary cycles of $\qpol$ will give a positive cycle in
$\rect \qpol$ and each of the vertices in $\rect \qpol$ will give us a negative cycle.
We chose the name rectified quiver because in Euclidean geometry the process of cutting of
the vertices of a polyhedron at the midpoints of the edges is called rectification.

To each of the arrows we assign a $\Z_2$-degree. If $abr$ is a boundary cycle of $\qpol$ and
$a$ and $b$ are both real or both weak we give the arrow $\alpha =\lvect{v_av_b}$ degree $1$.
Otherwise we give it degree 0.

Furthermore we will use Greek letters to denote paths and arrows in a rectified quiver and Roman letters
for arrows and paths in the original embedded quiver. Note that the rectified quiver is not always a dimer in the strict sense
because some of the positive or negative cycles can have length $2$.
 
\begin{example}
Below are two examples of rectification. In the first we rectify a quiver on a torus with one vertex and two loops to obtain
a dimer model with two vertices and four arrows. In the second example we rectify a tetrahedron to obtain an octahedron.
\[
\vcenter{\xymatrix@C=.7cm@R=.75cm{
\vtx{}\ar[rr]|a\ar[dd]|b&&\vtx{}\ar[dd]|b\\
&&\\
\vtx{}\ar[rr]|a&&\vtx{}
}}
\stackrel{\rect{}}{\to}
\vcenter{\xymatrix@C=.7cm@R=.75cm{
\save [].[dd].[ddrr].[rr]*[F.]\frm{}="back"
\restore
&\vtx{a}\ar@{<-}[rd]&\\
\vtx{b}\ar@{<-}[ur]|0&&\vtx{b}\ar[ld]|0\\
&\vtx{a}\ar@{<-}[lu]&
}}
\hspace{1cm}
\vcenter{
\xymatrix@C=.4cm@R=.75cm{
&&\vtx{}\ar@{.>}[dd]|a\ar[rd]|b&\\
\vtx{}\ar[rru]|c&&&\vtx{}\ar[ld]|d\ar[lll]|e\\
&&\vtx{}\ar[llu]|f&
}}
\stackrel{\rect{}}{\to}
\vcenter{\xymatrix@C=.4cm@R=.8cm{
\vtx{c}\ar@{<-}[dd]\ar@{<-}[rrr]&&&\vtx{b}\ar@{<-}[dll]\ar@{<.}[dl]|0\\
&\vtx{e}\ar@{<-}[ul]\ar@{<-}[drr]|0&\vtx{a}\ar@{<.}[ull]\ar@{<.}[dr]|0&\\
\vtx{f}\ar@{<-}[ur]|0\ar@{<.}[urr]&&&\vtx{d}\ar@{<-}[uu]\ar@{<-}[lll]
}}
\]
To indicate the grading we marked the degree zero arrows. The unmarked arrows all have degree 1.
\end{example}

\subsection{Gentle categories}
Instead of associating to this rectified quiver a Jacobi category, we will look at a different category.
The \emph{gentle category of a rectified quiver} is the quotient of the path category by the ideal generated by relations
of the form $\beta_i\beta_{i+1}$ where $\beta=\beta_1\dots \beta_l$ is a positive cycle (the index $i$ should be interpreted mod $l$) 
\[
 \cQA(\rect\qpol) := \frac{\C \rect \qpol}{\<\beta_i\beta_{i+1}| \beta \in \rect \qpol_2^+\>}.
\]

\begin{remark}
The gentle category of a rectified dimer is a generalization of the gentle algebra of a triangulation which was introduced by 
Assem, Br\"ustle, Charbonneau-Jodoin and Plamondon in \cite{plamondon}. If the original quiver that splits the surface comes from a triangulation, these two algebras coincide.
One can easily check that $\cQA(\rect \qpol)$ is a gentle algebra in the sense of \cite{plamondon} and \cite{gentleassem}.
\end{remark}

\begin{remark}
If we consider $\rect\qpol$ as a quiver embedded in $S$, it is again
clear from the definition that the gentle category construction is compatible with covers because
the positive cycles in the cover are precisely the lifts of positive cycles.

Sometimes it makes more sense to look at $\rect\qpol$ as a quiver embedded in the punctured surface $S\setminus \qpol_0$ 
instead of $S$. In this case it is also clear that the construction of the gentle algebra is compatible with punctured covers.
\end{remark}

\begin{lemma}\label{basispaths}
The subpaths of powers of negative cycles in $\rect \qpol$ form a basis for $\cQA(\rect \qpol)$.
\end{lemma}
\begin{proof}
Because of the nature of the defining relations of $\cQA(\rect \qpol)$, the set of all nonzero paths forms a basis.
Any nonzero nontrivial path in $\rect \qpol$ can only be extended in one way to a nonzero path that is one arrow longer:
one must add the arrow that sits in the same negative cycle as the last arrow. 
So, if $\rho$ is a nonzero path it must be a subpath of a power of a negative cycle.
\end{proof}

We end this section with a property of rectified quivers that will be important in appendix \ref{appendix}.
\begin{definition}\label{well-behaved}
We will call a rectified quiver \emph{well-behaved} if 
every arrow that connects two vertices of a positive boundary cycle is an arrow of that boundary cycle. 
\end{definition}
The main idea behind this notion is that for well-behaved quivers certain calculations become easier and we can cover non-well-behaved quivers 
by well-behaved ones, do the calculations in the cover and project them down.

\section{An $\cA_\infty$-structure}
We will now describe a specific $\cA_{\infty}$-structure on $\cQA(\rect\qpol)$ which
can be constructed inductively.
For any sequence of paths $\rho_1,\dots,\rho_k$ and any cycle $\beta_1\dots \beta_l \in \rect Q_2^+$ with $h(\beta_1)=t(\rho_i)$ 
we set 
\[
\genmu(\rho_1,\dots, \rho_i\beta_{1},\beta_{2},\dots, \beta_{l-1},\beta_{l}\rho_{i+1},\dots, \rho_k) := (-1)^s \genmu(\rho_1,\dots,\rho_k)
\]
with sign convention $s =l(\rho_1+\dots + \rho_{i} + k-i)$. Pictorially this gives rise to the following diagram:
\[
\genmu\left(
\vcenter{\xymatrix@=.3cm{&\ar[dr]&\\
\ar[ur]&&\ar[lldd]_(.8){\rho_{i}\beta_1}\\
&&\\
\dots&&\dots\ar[lluu]_(.2){\beta_l\rho_{i+1}}
}}
\right)=\pm 
\genmu\left(
\vcenter{\xymatrix@=.3cm{
&\ar[ld]_{\rho_{i}}&\\
\dots&&\dots\ar[lu]_{\rho_{i+1}}
}}
\right).
\]
For $k>2$ we set $\genmu(\sigma_1,\dots, \sigma_k)=0$ if we cannot perform any reduction of the form above and for 
$k=2$ we use the ordinary product on $\cQA(\rect \qpol)$. 

\begin{remark}
Note that the construction of this $\cA_\infty$-structure is compatible with both punctured and unpunctured covers because
the reduction rule is equivariant for deck transformations of these two types of covers: the reduction rules lift
to reduction rules in the cover.
\end{remark}

\begin{remark}
We can turn $\genmu$ also in a $\Z$-graded $\cA_{\infty}$-structure. This can be done by assigning to each arrow $\beta$ in $\rect \qpol$ a degree
$\deg \beta$ such that every positive cycle $\deg\beta_1\dots\beta_l=l-2$. This makes the reduction rule homogeneous. In this case we will call $\deg$ a compatible $\Z$-degree. Because every arrow only sits in one positive cycle it is always possible to find a compatible $\Z$-degree.
\end{remark}

\begin{theorem}\label{mainthm}
Let $\qpol$ be an embedded quiver that splits a compact surface.
\begin{enumerate}
 \item[1] $\genmu$ defines a $\cA_{\infty}$-structure on $\cQA(\rect \qpol)$. 
 \item[2] If $\tilde\mu$ is an $\cA_\infty$-structure on $\cQA(\rect \qpol)$ such that
for all paths $\rho_1,\dots,\rho_k$ we can find a $\lambda\in \C^*$ such that 
\[
\tilde\mu(\rho_1,\dots,\rho_k) = \lambda \genmu(\rho_1,\dots,\rho_k),  
\]
then $\tilde \mu$ is isomorphic to $\genmu$.
\end{enumerate}
Moreover, if $\qpol$ is a dimer, $\rect \qpol$ is well-behaved and $\deg$ is a compatible $\Z$-degree then
\begin{enumerate}
\item[3]
If $\tilde\mu$ is a $\deg$-homogeneous $\cA_\infty$-structure on $\cQA(\rect \qpol)$ such that
for every cycle $c=\beta_1\dots \beta_{l} \in \rect \qpol_2^+$ there is a $\lambda\in \C^*$ such that
\[
 \tilde\mu(\beta_{i},\dots,\beta_{j})=\begin{cases}
                       \lambda h(\beta_i)&j-i+1=l\\
                       0&j-i+1\ne l
                      \end{cases}
\]
then $\tilde\mu$ is $\cA_\infty$-isomorphic to $\genmu$.
\item[4]
If $\grp G$ is a finite group that acts on $\cQA(\rect \qpol)$ $\deg$-homogeneously such that $\tilde\mu$ and $\genmu$ are equivariant, then we can choose the $\cA_\infty$-isomorphism between 
$\tilde\mu$ and $\genmu$ to be $\grp G$-equivariant.
\end{enumerate}
\end{theorem}
\begin{proof}
This is a combination of theorems \ref{welldef}, \ref{uptoiso}, \ref{uptoaiso} and lemma \ref{gaction}. Proofs of these
results can be found in the appendix. 
\end{proof}

Different quivers on the same surface
will give different gentle $\cA_\infty$-categories, but these categories are closely related.
In fact if we go to the twisted completion the difference disappears.

\begin{lemma}\label{reversedirection}
Suppose $\qpol$ is an embedded quiver that splits the surface $|\qpol|$ and $a$ is one of the arrows of $\qpol$.
If $\qpol'$ is the new quiver obtained by changing the direction of $a$ (i.e. swapping its head and tail) then 
$\Tw\cQA (\rect \qpol),\genmu$ and $\Tw\cQA(\rect \qpol')$ are isomorphic $\Z_2$-graded $\cA_{\infty}$-categories.
\end{lemma}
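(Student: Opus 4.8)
The plan is to reduce the lemma to a local statement about a single arrow‐reversal and then exhibit the isomorphism on the level of twisted objects. First I would fix notation: write $\rect\qpol$ and $\rect\qpol'$ for the two rectified dimers, which share the same underlying surface $|\qpol|=|\qpol'|$, and recall from Section~\ref{rect} that reversing the arrow $a$ changes the $\Z_2$-degrees of exactly the two rectified arrows adjacent to $v_a$ in each of the boundary cycles containing $a$ (by the parenthetical remark in the proof of Lemma~\ref{nonzeropaths}, this changes the degrees of four rectified arrows in total, keeping every negative cycle of even degree). Crucially, the vertex set $\rect\qpol_0=\{v_b\mid b\in\qpol_1\}$ is unchanged, and the set of arrows of $\rect\qpol$ is combinatorially identified with that of $\rect\qpol'$ — only the $\Z_2$-grading differs, together with the bookkeeping of which arrows count as ``real'' versus ``weak''. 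So the difference between the two gentle categories $\cQA(\rect\qpol)$ and $\cQA(\rect\qpol')$ is purely a regrading: as ungraded categories with the $\genmu$-operations they are literally equal.

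The key step is then to compare the two $\Z_2$-gradings via a shift on one vertex. Consider the twisted object $v_a[1]$ in $\Tw\cQA(\rect\qpol)$ obtained by shifting the single object $v_a$; I claim the assignment sending each object $v_b$ ($b\ne a$) to itself and $v_a$ to $v_a[1]$ extends to an $\cA_\infty$-isomorphism $\Tw\cQA(\rect\qpol)\to\Tw\cQA(\rect\qpol')$. Indeed, on a homomorphism space $\Hom(v_b,v_c)$ the shift changes the $\Z_2$-degree of precisely those morphism components with source or target $v_a$ by $1$; by Lemma~\ref{nonzeropaths} every such component is $p\C[c]$ for a negative cycle through $v_a$ (of even degree), and one checks this degree change is exactly the one produced by reversing $a$ in the rectification rule of Section~\ref{rect} for the generating rectified arrows, hence for all paths by multiplicativity. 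Since the $\genmu$-operations on both sides are governed by the combinatorial reduction rule (collapsing positive cycles $b_1\dots b_l\in\rect\qpol_2^+$), which is insensitive to the grading up to the sign convention $s=l(p_1+\dots+p_i+k-i)$, the only thing to verify is that the signs in the twisted-object $\mu$ (the factor $(-1)^{n+t-k}$ per $\delta$, and the Koszul signs in $[\mathrm{M_k}]$) transform correctly under the degree shift. This is a standard compatibility check for shift functors on twisted completions.

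I would organize the writeup as: (1) identify $\rect\qpol$ and $\rect\qpol'$ as the same quiver with two gradings $\deg$ and $\deg'$, and record that $\deg'-\deg$ on a rectified arrow equals $1$ iff the arrow is incident to $v_a$ in a boundary cycle where reversing $a$ flips the real/weak pattern; (2) using Lemma~\ref{nonzeropaths}, extend this to the formula $\deg'(p)-\deg(p)\equiv [\,t(p)=v_a\,]+[\,h(p)=v_a\,]\pmod 2$ for any nonzero path $p$; (3) define $\cF_0$ by the shift $v_a\mapsto v_a[1]$ and $\cF_1=\id$ on morphism spaces (now matching degrees), $\cF_{\ge 2}=0$; (4) check the $\cA_\infty$-functor equations $[\mathrm{F_k}]$, which reduce — since $\cF$ is strict and identity on morphisms — to the claim that $\genmu$ on $\Tw\cQA(\rect\qpol)$ and on $\Tw\cQA(\rect\qpol')$ agree after the relabeling, i.e. that all signs match. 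The main obstacle, and the only genuinely technical point, is (4): tracking the interaction between the shift $[1]$, the twisted-differential sign $(-1)^{n+t-k}$, and the Koszul-type sign $\sigma$ in the $\cA_\infty$ relations, to confirm they cancel against the parity changes in step (2). Everything else is bookkeeping. One should also note the degenerate case where reversing $a$ changes nothing (when $a$ is already ``balanced'' at both boundary cycles), in which case $\cF$ is the identity and there is nothing to prove.
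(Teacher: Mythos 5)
Your proposal is correct and takes essentially the same route as the paper: both define a strict $\cA_\infty$-functor that is the identity on morphisms and shifts the single object $v_a$ by $[1]$ in the twisted completion, then observe that this regrading matches the change in $\Z_2$-degrees produced by reversing the arrow $a$ in the rectification. The paper simply states this construction and asserts ``it is easy to check that the degrees match up''; your step (2), giving the explicit parity formula $\deg'(p)-\deg(p)\equiv [\,t(p)=v_a\,]+[\,h(p)=v_a\,]\pmod 2$ and your flagging of the sign-compatibility check in step (4), unpacks exactly the content of that assertion.
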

\begin{proof}
Let $v_0$ be the object in $\cQA(\rect \qpol)$ corresponding to the arrow $a$ we want to reverse. We denote the corresponding
object in $\cQA (\rect \qpol')$ as $v_0'$. All other objects we denote by $v_i$ in both categories.
We now define a strict functor $\cF: \Tw\cQA( \rect Q)\to \Tw\cQA (\rect Q')$:
\begin{itemize}
 \item $\cF_0( v_0[0]^{\oplus m_0}+v_0[1]^{\oplus m_1}+\rest ,\delta)=( v_0'[1]^{\oplus m_0}+v_0'[0]^{\oplus m_1}+\rest ,\delta)$
 \item $\cF_1(f)=f$
\end{itemize}
It is easy to check that the degrees match up and that this is an isomorphism.
\end{proof}
The lemma above allows us to bring the arrows in a given cycle all in the same direction, without changing
the twisted completion. In the next lemma we are going to investigate what happens if one splits one of the cycles in 2.

\begin{lemma}\label{putinarrow}
Suppose $\qpol$ is an embedded quiver that splits the surface $|\qpol|$.
Suppose that $a_1\dots a_k \in \C \qpol$ is a boundary cycle and let $b$ be a new arrow in this face connecting $h(a_1)$ and
$h(a_i)$ with $2<i<k$. Denote the quiver obtained by adding $b$ to $\qpol$ as $\qpol'$,
then $\H\Tw \cQA (\rect \qpol)$ and $\H\Tw \cQA (\rect \qpol')$ are equivalent as $\Z_2$-graded $A_\infty$-categories.
\end{lemma}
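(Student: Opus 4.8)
The plan is to realize $\rect\qpol$ and $\rect\qpol'$ as objects that differ only by a "local move" inside one positive cycle, and then exhibit the new vertex $v_b$ of $\rect\qpol'$ as a twisted object built out of the vertices of $\rect\qpol$, together with its mirror statement. Concretely, adding the chord $b$ across the face $a_1\dots a_k$ subdivides that face into two faces, one bounded (as a boundary cycle of $\qpol'$) by $a_1\dots a_{i-1}b^{-1}$ and one by $a_i\dots a_k b$ (up to orientation bookkeeping, which I will fix using Lemma \ref{reversedirection} so that $b$ points the convenient way). At the level of rectified dimers this replaces the single positive cycle on the vertices $v_{a_1},\dots,v_{a_k}$ by two positive cycles that share the new vertex $v_b$, while all negative cycles are unchanged except that the two negative cycles meeting at $v_{a_1}$ and at $v_{a_i}$ each acquire $v_b$ as an extra vertex. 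So the combinatorial change is entirely local.

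The main step is to produce a quasi-isomorphism $\H\Tw\cQA(\rect\qpol)\to \H\Tw\cQA(\rect\qpol')$. First I would identify, inside $\Tw\cQA(\rect\qpol')$, a twisted object $T$ supported on the old vertices that is $\cA_\infty$-isomorphic to $v_b$: the new vertex sits on a positive $2$-cycle with, say, $v_{a_1}$, so there is a degree-$1$ arrow $\beta:v_{a_1}\to v_b$, and by the defining relation of $\cQA$ and the $\mu^\kappa$-structure $\genmu$ one checks that the cone $v_{a_1}[0]\xrightarrow{\ \beta\ } v_b[1]$ is trivial in $\H\Tw$, so $v_b$ is equivalent to a shift of the other component of that cone — iterating around the new short positive cycle expresses $v_b$ as a twisted complex on $v_{a_1},\dots,v_{a_{i-1}}$ (or, dually, on $v_{a_i},\dots,v_{a_k}$). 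This gives a fully faithful $\cA_\infty$-functor $\cQA(\rect\qpol)\hookrightarrow \Tw\cQA(\rect\qpol')$, hence by Kadeishvili (the theorem of \cite{Kadeishvili}) and the properties of $\Tw$ an induced functor $\H\Tw\cQA(\rect\qpol)\to\H\Tw\cQA(\rect\qpol')$ which is fully faithful. For essential surjectivity one shows the image generates: every object of $\H\Tw\cQA(\rect\qpol')$ is built from the vertices $v_{a_1},\dots,v_{a_k},v_b,\dots$, and $v_b$ lies in the triangulated hull of the $v_{a_j}$ by the cone computation above, so the old vertices already generate $\Tw\cQA(\rect\qpol')$. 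The only subtlety is to make sure all the structure maps match: that $\cF_1$ respects $\genmu$ on both sides up to the $\cA_\infty$-relations $[\mathrm{F_k}]$. Here I would lean on part (2) of Theorem \ref{mainthm}: any $\cA_\infty$-structure on the relevant gentle category with the prescribed behaviour on positive cycles is $\cA_\infty$-isomorphic to $\genmu$, so it suffices to check the structure transported through the cone satisfies that normalization, which is a finite check on the short positive cycle containing $b$.

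The hard part, I expect, will be the sign and shift bookkeeping in the cone construction — getting the $\Z_2$-degrees of the new arrows incident to $v_b$ right (recall the degree rule: $\lvect{v_xv_y}$ has degree $1$ iff $x,y$ are both real or both weak in a common boundary cycle), and checking that the twisted differential $\delta$ on the expression for $v_b$ actually squares to zero in the sense of the Maurer–Cartan equation $\sum (-1)^{n(n-1)/2}\mu_n(\delta,\dots,\delta)=0$. Because $b$ is a real arrow added between two heads, the two new rectified arrows at $v_b$ in each new positive cycle have degree $1$, matching the fact that $\rect\qpol'$ is again a genuine dimer on the same surface; with that in hand the cone telescopes exactly as in a one-arrow mutation. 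A secondary point is the hypothesis $2<i<k-1$: it guarantees both new faces have at least three sides, so both new positive cycles have length $\ge 3$ and $v_b$ genuinely sits on two distinct negative cycles, keeping us inside the hypotheses of Lemma \ref{nonzeropaths} and hence inside the class of $\cA_\infty$-structures covered by Theorem \ref{mainthm}. Once the local picture is set up, invariance of $\H\Tw$ under passing to a quasi-isomorphic $\cA_\infty$-category (as recorded in the asides following Kadeishvili's theorem and in \cite{Keller}) finishes the argument.
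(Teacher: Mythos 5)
Your high-level strategy matches the paper's: express the new rectified vertex $v_b$ as a twisted object built on the old vertices and deduce that each $\H\Tw$ is a full subcategory of the other. But the local combinatorics on which you hang the argument is wrong, and the key verification is left vague, so as written the proposal has genuine gaps.

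First, the claim that ``the new vertex sits on a positive $2$-cycle with $v_{a_1}$'' is false. Adding the chord $b$ to the face $a_1\dots a_k$ subdivides it into two faces of lengths $i$ and $k-i+2$ (each including $b$), and the hypothesis $2<i<k-1$ is precisely what guarantees both are at least $3$. So in $\rect\qpol'$ the vertex $v_b$ lies on two positive cycles of length $\geq 3$; there is no positive $2$-cycle at all. Consequently the subsequent ``cone $v_{a_1}[0]\to v_b[1]$ is trivial'' step has no basis: a one-step cone being trivial would say $\beta$ is an isomorphism, which it is not. The phrase ``iterating around the new short positive cycle'' gestures at the right object but never says what the twisted differential $\delta$ is, nor why the resulting twisted complex is isomorphic to $v_b$ in $\H\Tw$. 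That is exactly the content one must supply. The paper's proof does so explicitly: it sets
\[
w = \Bigl( v_1[1]\oplus\cdots\oplus v_i[1],\ \delta = \smatrix{0&\alpha_1&&\\&\ddots&\ddots&\\&&&\alpha_{i-1}\\&&&0}\Bigr),
\]
with $\alpha_j$ the rectified arrows along one of the two new positive cycles, defines $f_1=\smatrix{\beta_0&0&\cdots&0}^{\top}:v_0\to w$ and $f_2=\smatrix{0&\cdots&0&\beta_i}:w\to v_0$, and then \emph{computes} $\mu(f_1,f_2)$ and $\mu(f_2,f_1)$ in $\Tw$. Both reduce to products of all arrows around a positive cycle of $\rect\qpol'$, which are $\pm\id$ by the defining property of $\genmu$. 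This is the finite check you said you expected but did not carry out, and it is where the $\Z_2$-degree bookkeeping actually gets used.

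Second, invoking Theorem \ref{mainthm}(2) at the end is an unnecessary and somewhat off-target move. Both $\cQA(\rect\qpol)$ and $\cQA(\rect\qpol')$ carry the \emph{same} explicitly defined structure $\genmu$, and the paper's computation of $\mu(f_1,f_2)$ and $\mu(f_2,f_1)$ already takes place inside $\Tw\cQA(\rect\qpol'),\genmu$; there is no second $\cA_\infty$-structure that needs normalizing. Your appeal to Theorem \ref{mainthm}(2) would only be needed if you were trying to compare two a priori different structures, which is not the situation here and risks a circular dependency. Finally, the ``invariance of $\H\Tw$ under quasi-isomorphism'' framing is fine, but the shorter route used in the paper is that $\Tw$ is closed under taking twisted objects, so once $v_b$ is shown isomorphic to an object of $\Tw\cQA(\rect\qpol)$, each $\H\Tw$ embeds fully faithfully into the other and they are equivalent.
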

\begin{proof}
The assumption that $a_1\dots a_k$ is a boundary cycle implies that it goes counterclockwise around its face in the surface, which will be important
for the degrees of the arrows in the rectified quiver.

Let $v_0$ be the object in $\cQA(\rect Q')$ corresponding to the arrow $b$ we want to add.
Denote the object corresponding to the arrow $a_j$ by $v_j$ in both categories.
We use $\alpha_j$ to denote the arrow between $v_{j+1}$ and $v_{j}$ and the arrow 
$\beta_0$ connects $v_1$ with $v_0$ and $\beta_i$ connects $v_0$ to $v_i$. All these arrows have degree $1$.
Finally there are the degree zero arrows
$\beta_k$ and $\beta_{i+1}$ which connect $v_0$ to $v_k$ and $v_{i+1}$ to $v_{0}$.

The $A_\infty$-category $\cQA(\rect Q)$ is a full subcategory of $\Tw\cQA(\rect Q')$ because
we can identify $\alpha_i$ with $\beta_{i+1}\beta_i$ and $\alpha_k$ with $\beta_0\beta_{k}$.
Also the products are compatible because if 
$$\mu(\dots, p\alpha_k,\dots, \alpha_1q,\dots)\to \mu(\dots, p,q,\dots)$$ 
is a valid reduction for $\rect Q$ then
$$\mu(\dots, p\beta_0\beta_{k},\dots,\beta_{i+1}\beta_i,\dots, \alpha_1q,\dots)\to 
\mu(\dots, p\beta_0,\beta_i,\dots, \alpha_1q,\dots)\to \mu(\dots, p,q,\dots)$$ 
is a double reduction in $\rect Q'$.
Analogous reductions can be made to reduce a cyclic permutation of $\alpha_1\dots \alpha_k$.

To show that $\H\Tw \cQA (\rect \qpol)$ and $\H\Tw \cQA (\rect \qpol')$ are equivalent, it suffices to find
a complex in $\H\Tw \cQA (\rect \qpol)$ that is isomorphic to $v_0$ in $\H\Tw \cQA (\rect \qpol')$.

The complex we are looking for is
\[
w = \left( v_1[1]+\dots+v_i[1],\delta :=\smatrix{0&\alpha_1&&\\
&\ddots&\ddots&\\
&&&\alpha_{i-1}\\
&&&0
}\right).
\]
Indeed we have a map $f_1 :v_0\to w$ given by $\smatrix{\beta_0&0&\cdots&0}^\top$ and a map $f_2:w\to v_0$ given by $\smatrix{0&\cdots&0&\beta_i}$.

These maps are each other's inverses in $\H\Tw\cQA(\rect\qpol')$.
Clearly $$\mu_2(f_1,f_2)=\mu(f_1,\delta,\dots,\delta,f_2)=\mu(\beta_0,\alpha_1,\dots,\alpha_i,\beta_i)=\Id_{v_0}.$$
On the other hand we have
\se{
 \mu(f_2,f_1) &= \mu(f_2,f_1,\delta,\dots, \delta) + \mu(\delta,f_2,f_1,\delta,\dots, \delta)+ \dots + \mu(\delta,\dots, \delta,f_2,f_1)\\
&=\left(\begin{smallmatrix}
\mu(\alpha_1,\dots,\alpha_{i-1},\beta_i,\beta_0)&&&\\
&\mu(\alpha_2,\dots,\alpha_{i-1},\beta_i,\beta_0,\alpha_1)&&\\
&&\ddots&\\
&&&\mu(\beta_i,\beta_0,\alpha_1,\dots,\alpha_{i-1})\\
\end{smallmatrix}\right)\\
&=
\smatrix{
\Id_{v_1}&&&\\
&\Id_{v_2}&&\\
&&\ddots&\\
&&&\Id{v_i}\\
}=
\Id_{w}.
}
So $w$ and $v_0$ are isomorphic in $\H\Tw\cQA(\rect\qpol')$.
\end{proof}

\begin{corollary}
As an $\cA_{\infty}$-category $\H\Tw \cQA(\rect \qpol),\genmu$ only depends on the genus of the surface and the number of vertices of $\qpol$.
\end{corollary}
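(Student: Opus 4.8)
The plan is to show that any two embedded quivers with the same genus and the same number of vertices can be transformed into one another by a finite sequence of moves, each of which preserves $\H\Tw\cQA(\rect\qpol),\genmu$ up to $\cA_\infty$-equivalence. The two moves available to us are exactly the ones established in Lemma~\ref{reversedirection} (reversing the direction of an arrow) and Lemma~\ref{putinarrow} (adding a new arrow inside a face between $h(a_1)$ and $h(a_i)$ with $2<i<k-1$), together with its reverse (deleting such a ``chord'' arrow). Since each of these gives an isomorphism or equivalence of the relevant $\cA_\infty$-categories, it suffices to show that the equivalence class of $\qpol$ under these moves is determined by $(\text{genus}(|\qpol|),\,|\qpol_0|)$.

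First I would reduce to a normal form. Fix the surface $S$ of genus $g$ and fix a vertex set of size $n$. Given an arbitrary embedded quiver $\qpol$ that splits $S$, I would first use arrow-additions (Lemma~\ref{putinarrow}) to subdivide all faces until every face is a triangle; care is needed because the lemma only allows a chord with $2<i<k-1$, so for faces that are too small one first adds a chord to enlarge a neighbouring polygon, but a short combinatorial argument shows one can always reach a triangulation with the same vertex set. Next, using Lemma~\ref{reversedirection} I may freely reorient arrows, so the underlying \emph{undirected} triangulated graph on $S$ with $n$ vertices is the only remaining datum. Now I invoke the classical fact that any two triangulations of a fixed closed orientable surface with the same number of vertices are connected by a sequence of diagonal flips (Wagner's theorem for the sphere, and its extension to higher genus by work of Negami and others); a diagonal flip inside the union of two triangles is itself realized by deleting one chord and re-inserting the other via Lemma~\ref{putinarrow} and its inverse (after possibly reorienting with Lemma~\ref{reversedirection} so that the face-boundary hypotheses are met). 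Hence all triangulations with $n$ vertices on $S$ lie in one equivalence class, and so does every $\qpol$.

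The main obstacle I anticipate is the bookkeeping around the index restriction $2<i<k-1$ in Lemma~\ref{putinarrow}: chords that cut off a triangle or a quadrilateral are not directly covered, so each diagonal flip must be decomposed into legal moves (e.g.\ first triangulate generously, flip in the interior of a large polygon, then prune). One must check that these auxiliary additions and deletions can always be arranged without leaving the class of quivers that split $S$ with the prescribed vertex set, and that the degree conventions on $\rect\qpol$ are respected throughout — but since Lemmas~\ref{reversedirection} and~\ref{putinarrow} already track the $\Z_2$-grading, this is routine once the combinatorial reduction is in place. A second, milder point is that one should note the statement is about $\H\Tw$, so only the \emph{equivalence} in Lemma~\ref{putinarrow} (not a strict isomorphism) is available; this is harmless since equivalences of $\cA_\infty$-categories compose.
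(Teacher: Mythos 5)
Your proposal follows essentially the same route as the paper: reduce to triangulations by adding arrows, then invoke connectedness of triangulations under diagonal flips (the paper cites Hatcher's result on triangulations of punctured surfaces where you cite Wagner/Negami — the content is the same), with each flip realized by deleting a chord and inserting the opposite one via Lemma~\ref{putinarrow}, and Lemma~\ref{reversedirection} handling orientations. One thing worth noting: the concern you raise about the restriction $2<i<k-1$ in Lemma~\ref{putinarrow} is genuine and is \emph{not} addressed by the paper either. The paper's own argument says a mutation ``removes an arrow to create a quadrangle and then puts in a new arrow,'' which is precisely the case $k=4$, $i=3$ where the hypothesis $2<i<k-1$ is vacuous; so as written the paper applies Lemma~\ref{putinarrow} outside its stated hypotheses. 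Your proposed workaround (enlarge to a bigger polygon, flip there, then prune) is a sensible way to patch this, though you leave the details as ``routine''; to actually close the gap one should either verify that the proof of Lemma~\ref{putinarrow} in fact goes through for $3\le i\le k-1$ (it does — the twisted object $v_1[1]\oplus\cdots\oplus v_i[1]$ and the two morphisms $f_1,f_2$ make sense and the displayed identities still hold, the bound merely guaranteeing both resulting faces are at least bigons/triangles), or spell out the detour through a pentagon. In short: correct plan, same approach as the paper, with a legitimate flag raised on a hypothesis that the paper itself strains.
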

\begin{proof}
By adding enough arrows we can turn $\qpol$ into a triangulation, so the statement only needs to be
proven for triangulations. By \cite{Hatcher} we know that
two triangulations of punctured surfaces can be turned into each other by a process of mutation.

This process removes an arrow to create a quadrangle and then puts in a new arrow coming from the other diagonal of the quadrangle.
By lemma \ref{putinarrow} this does not change $\H\Tw\cQA(\rect \qpol)$.
\end{proof}
 
\section{The relation with the wrapped Fukaya category}

A Liouville structure on a manifold with punctures is a 1-form $\theta$ such that $\omega=d\theta$ is a symplectic form.
The symplectic form allows us to transform the 1-form into a vector field which generates a flow called the Liouville flow and we demand that the Liouville flow points towards the punctures near them.
To any Liouville manifold one can associate an $A_{\infty}$-category called the wrapped Fukaya category.
The objects are graded exact Lagrangian submanifolds which are invariant under the Liouville flow outside a compact subset of the punctured manifold. 

To an embedded quiver $\qpol$ that splits a compact surface $S$ 
one can associate a punctured surface by removing the vertices: $S\setminus \qpol_0$ and put a Liouville structure on it.
In this case any curve that connects two punctures
is an exact Lagrangian submanifold. Therefore
it is natural to consider each arrow $a \in \qpol_1$ as an object $\cLL_a$ in the wrapped Fukaya category of $S\setminus \qpol_0$. 

The hom-spaces in the wrapped Fukaya category are quite tricky to define 
and for details we refer to \cite{Abou2}. Using results by Abouzaid from appendix \ref{appendixabouzaid} we can simplify the construction a lot in our setting.  

A \emph{Reeb chord} between two Lagrangians $\cLL_0$ and $\cLL_1$ is a time 1 chord for the flow $\phi_H$ of a fixed Hamiltonian $H:S\setminus\qpol_0\to \Rl$ which is quadratic at infinity (i.e. near the punctures).
In other words these are curves $\gamma: [0,1]\to S: t \mapsto \phi_H^t(\gamma(0))$ such that $\gamma(0)\in \cLL_0$ and $\gamma(1)\in \cLL_1$. Reeb chords are also in one to one correspondence with intersection points between $\phi_H^1\cLL_0$ and $\cLL_1$. As a vector space $\Hom(\cLL_0,\cLL_1)$ is defined as the $\C$-linear span of the set of Reeb chords. 

To give this space a $\Z$-grading, we
choose a real vector field $X$ on the compact surface $S$ with zeros at the punctures. A grading of the Lagrangian submanifold $\cLL$ is a smooth function $g : \cLL \to \Rl$ such that $g(p) \mod 2\pi$ is the angle between $\frac{d\cLL}{dt}|_p$ and $X_{p}$.\footnote{Note that although we need a complex structure on $S$ to define angles, the definition of the grading does not depend on this because it can also be framed in terms of lifts of the Lagrangians to the line bundle that covers the circle bundle $TS/\Rl^+$.}
Adding $\pi$ to $g$ gives a grading of the Lagrangian submanifold with reversed orientation. 

Because the arrows split the surface in polygons we can easily find a vector field $X$ such that all the Lagrangians coming from arrows are integral curves of $X$. In this case we can set all $g's$ to zero.

The flow transports the grading on $\cLL_0$ to a grading 
on $\phi_H^1(\cLL_0)$ and the degree we assign 
to a Reeb chord $\gamma$ is the $k$ for which $\phi_H^1g_0(\gamma(0))-g_1(\gamma(1)) + k\pi \in [0,\pi)$.

For our special vector field $X$ the degree of a Reeb chord can easily be calculated graphically from $X$.
Suppose the Reeb chord winds around a puncture $v \in \qpol_0$ then we can cut
the vector field around $v$ in wedges of the forms:

\begin{center}
\begin{tikzpicture} 
\draw[-<-] (360*2/5:1)--(0,0); 
\draw[-<-] (0,0) .. controls +(1008/5:1.3) and +(792/5:1.3) .. (0,0);
\draw[-<-] (0,0) .. controls +(936/5:.9) and +(864/5:.9) .. (0,0);
\draw[->-] (360*3/5:1)--(0,0); 
\draw (-.5,-.8) node{$-1$};
\end{tikzpicture}
\hspace{1cm}
\begin{tikzpicture} 
\draw[-<-] (360*2/5:1)--(0,0); 
\draw[-<-] (360*11/25:1)--(0,0); 
\draw[-<-] (360*12/25:1)--(0,0); 
\draw[-<-] (360*13/25:1)--(0,0); 
\draw[-<-] (360*14/25:1)--(0,0); 
\draw[-<-] (360*3/5:1)--(0,0); 
\draw (-.5,-.8) node{$0$};
\end{tikzpicture}
\hspace{1cm}
\begin{tikzpicture} 
\draw[-<-] (360*2/5:1)--(0,0); 
\draw[-<-] (1152/5-360/5:1) .. controls +(2052/5-360/5:1) and +(2268/5-360/5:1) .. (1368/5-360/5:1);
\draw[-<-] (1224/5-360/5:1) .. controls +(2124/5-360/5:.5) and +(2196/5-360/5:.5) .. (1296/5-360/5:1);
\draw[->-] (360*3/5:1)--(0,0); 
\draw (-.5,-.8) node{$+1$};
\end{tikzpicture}
\end{center}
For each wedge the Reeb chord traverses, one adds the corresponding term
to its degree.

Without loss of generality we can assume that the Hamiltonian $H:S\to \Rl$ restricted to a given Lagrangian has a unique extremal
point and therefore there is also a unique stationary Reeb chord on every Lagrangian. The fact that the Hamiltonian is quadratic near the punctures will make 
the flow circle clockwise around the punctures. Again without loss of generality we can assume that for every Lagrangrian submanifold $\cLL$ coming from an arrow in the quiver,
$\phi^1_H(\cLL)$ will spiral clockwise around the puncture and intersect the other arrows transversally. This implies that for every non-positive winding number and every pair of arrows meeting at a vertex $v$ we can find a unique Reeb chord connecting these Lagrangians with that specified winding number. By looking at the wedges around a puncture it is easy to deduce that if two Reeb chords between the same Lagrangians differ by a winding number $-w$ the difference between their degrees will be $2w (1-i_v)$ where $i_v$ is the index of the vector field $X$ at the vertex $v$.

From now on we will denote the the wrapped Fukaya category of $S\setminus \qpol_0$ with degrees given by $X$ as $\Fuk_X(S\setminus\qpol_0)$, the full subcategory containing as object the $\cLL_a$ (with $g=0$) will be denoted by $\fuk_X(\qpol)$. If we reduced the $\Z$-grading to a $\Z_2$ grading (which is independent of $X$) we will write $\Fuk(S\setminus\qpol_0)$ and $\fuk(\qpol)$.

\begin{lemma}
Given an embedded quiver $\qpol$ there is a natural one to one correspondence
between Reeb chords from $\cLL_a$ to $\cLL_b$  with $a,b \in \qpol_1$ and nonzero paths from $v_a$ to $v_b$ in  $\cQA(\rect \qpol)$.
\end{lemma}
\begin{proof}
This statement follows from the discussion above and lemma \ref{basispaths}. 
\end{proof}

The statement above implies that each Reeb chord $\gamma$ can be seen as a path in the quiver $\rect \qpol$ and we denote by $[\gamma]$ be its homotopy class. Composition of homotopy classes induces a product which we denote by $\star$ (see also appendix \ref{appendixabouzaid}). 

\begin{lemma}\label{degdeg}
For an embedded quiver $\qpol$ and a vectorfield $X$ with zeros on the vertices we have that if $[\gamma_1]\star \dots \star [\gamma_k] = [\tau]$ then
\[
\deg_X \tau \ge 2 - k + \deg_X \gamma_1 + \dots + \deg_X \gamma_k
\]
Equality holds if and only if
\[
\genmu(\gamma_1,\dots,\gamma_k)= \pm \tau \text{ in $\cQA(\rect \qpol)$}
\] 
\end{lemma} 
\begin{proof}
If two Reeb chords $\gamma_i, \gamma_{i+1}$ wind around the same puncture
then we can find a $\gamma$ with $[\gamma] =[\gamma_i]\star[\gamma_{i+1}]$.
The degree of this new $\gamma$ is the sum of the degrees of the 2 old Reeb chords 
because it traverses the same wedges. If we can prove the statement for the sequence with the $\gamma_{i},\gamma_{i+1}$ replaced by the new $\gamma$ 
it also holds for the original sequence (but the equality becomes strict because the new sequence has one term less but the degrees remain the same).

Assume that consecutive $\gamma's$ wind around different punctures.
If $\tau$ and $\gamma_1$ wind around the same puncture we can shorten them until one of them becomes trivial. If $\gamma_1$ is trivial
we can concatenate $\gamma_1$ with $\gamma_2$. A similar argument holds for $\gamma_k$, so we can assume that either $\tau$ is trivial
or it winds around a different puncture than $\gamma_1$ and $\gamma_k$.
The latter is impossible because $\tau$ winds in an anticlockwise direction
around this puncture and $[\gamma_1]\star\dots \star [\gamma_k]$ in a clockwise direction, so they cannot be homotopic. 

Now assume that $\tau$ is trivial and lift the picture to the universal cover of the punctured surface, which is contractible. The lifts of the $\gamma_i$ are the Reeb chords that go around the internal
angles of a big polygon which is obtained by glueing together lifts of
polygons of the split surface in a treelike way.

The sum of the degrees of the $\gamma_i$ is the sum of all the wedges in
the corners of this polygon. So if we can prove that this is equal to $k-2$
we are done. It is easy to do this by induction on the number of
homotopy classes of integral curves of the vector field inside the polygon. None of these are closed because they cannot leave the polygon and the vectorfield is nonzero inside the polygon.
If there is only one homotopy type for the integral curves inside the polygon, there is either one negative wedge and $k-1$ positive (if the homotopy class is a loop starting in a puncture)
or two zero wedges (at the source and target puncture) and $k-2$ positive.

If there are more homotopy types we cut the polygon along a curve 
where the homotopy type changes. We get $2$ new polygons with $k_1$ and $k_2$ sides and $k=k_1+k_2-2$. The wedges get distributed over the two polygons and by induction we get that the sum of all wedges is $2-k_1+2-k_2=2-k$.

To prove the second part, first observe that the reduction move is homogeneous for the grading $\deg_X$ because every $k$-gon has degree $2-k$. This establishes the if-part. 
To prove the only if part we will use induction on $k$. If $k=2$ then $\genmu$ and $\star$ coincide. For higher $k$ the discussion above shows that equality can only hold if all consecutive $\gamma's$ wind around different punctures. 
Now look closely at $[\gamma_1]\star \dots \star [\gamma_k]$. Because the universal cover of consists of polygons glued together in a treelike way,
there is a subpath $\gamma_i\star \dots \star \gamma_j$ such that $\gamma_j=\beta_l\star\gamma'_j$ enters a certain polygon $\beta_1, \gamma_{i+1},\dots,\gamma_{j-1},\beta_l$ are Reeb chords
that lie inside the polygon and $\gamma_i=\gamma'_i\beta_1$ leaves the polygon.
\begin{center}
\resizebox{!}{3cm}{
\begin{tikzpicture}
\draw [-latex] (7,0)--(7,-2);
\draw [-latex] (7,-2)--(9,-2);
\draw [-latex] (9,-2)--(9,0);
\draw [-latex] (9,0)--(7,0);
\draw [-latex] (7,-2)--(6,-3.73);
\draw [-latex] (6,-3.73)--(8,-3.73);
\draw [-latex] (8,-3.73)--(10,-3.73);
\draw [-latex] (10,-3.73)--(9,-2);
\draw [-latex] (9,-2)--(8,-3.73);
\draw [-latex] (8,-3.73)--(7,-2);
\draw [latex-] (7,-0.5) arc (270:360:.5);
\draw [latex-] (7.5,-2) arc (0:90:.5);
\draw [latex-] (9,-1.5) arc (90:180:.5);
\draw [latex-] (8.5,0) arc (180:270:.5);
\draw [latex-] (6.5,-3.73) arc (0:60:.5);
\draw [latex-] (8.5,-3.73) arc (0:180:.5);
\draw [latex-] (9.75,-3.30) arc (120:180:.5);
\draw  (8.5,-2) arc (180:300:.5);
\draw  (6.75,-2.43) arc (240:360:.5);
\draw (7.45,-1.2) node{$\gamma'_{i}\beta_1$};
\draw (7.5,-0.5) node{$\beta_2$};
\draw (8.5,-0.5) node{$\beta_3$};
\draw (8.55,-1.2) node{$\beta_4\gamma'_{j}$};
\draw (11,-1.5) node{$\to$};
\begin{scope}[xshift=6cm,yshift=1cm]
\draw [-latex] (7,-2)--(9,-2);
\draw [-latex] (7,-2)--(6,-3.73);
\draw [-latex] (6,-3.73)--(8,-3.73);
\draw [-latex] (8,-3.73)--(10,-3.73);
\draw [-latex] (10,-3.73)--(9,-2);
\draw [-latex] (9,-2)--(8,-3.73);
\draw [-latex] (8,-3.73)--(7,-2);
\draw [latex-] (6.5,-3.73) arc (0:60:.5);
\draw [latex-] (8.5,-3.73) arc (0:180:.5);
\draw [latex-] (9.75,-3.30) arc (120:180:.5);
\draw [latex-] (8.5,-2) arc (180:300:.5);
\draw [latex-] (6.75,-2.43) arc (240:360:.5);
\draw (7.5,-2.5) node{$\gamma'_{i}$};
\draw (8.5,-2.5) node{$\gamma'_{j}$};
\end{scope}
\end{tikzpicture}}
\end{center}
This shows that we can apply the reduction move to $\genmu$. After the reduction move the new sequence of $\gamma's$ will have degree $l-2$ less and the new $k$ is also $l-2$ less, so the equality holds for the new sequence if and only if it holds for the old
sequence. 
\end{proof}
\begin{remark}\label{Xgrading}
This lemma shows that $\deg_X$ turns $\cQA(\rect \qpol)$ into a $\Z$-graded $\cA_{\infty}$-algebra. This grading is a refinement of the $\Z_2$-grading 
because a wedge with odd parity changes the orientation of the bounding integral curves. To make the difference with the $\Z_2$-graded version
we will denote it by $\cQA_X(\rect \qpol)$.
\end{remark}

To define the $\A_\infty$-structure on the Hom-spaces we need an extra notion.
A \emph{ribbon tree map} is map $u:\mathbb{T}_n \to S\setminus\qpol_0$ which maps the ribbon tree $\mathbb{T}_n$ 
\begin{center}
\resizebox{!}{2cm}{
\begin{tikzpicture}
\draw (0,0) arc (180:360:.5);
\draw (2,0) arc (180:360:.5);
\draw (4,0) arc (180:360:.5);
\draw (-1,0) arc (180:270:1.5);
\draw (2,-2) arc (0:90:.5);
\draw[dotted,-latex] (0,0.1)--(-1,0.1);
\draw[dotted,-latex] (2,0.1)--(1,0.1);
\draw[dotted,-latex] (4,0.1)--(3,0.1);
\draw[dotted,-latex] (6,0.1)--(5,0.1);
\draw[dotted,-latex] (3,-2.1)--(2,-2.1);
\draw (3.5,-1.5) arc (90:180:.5);
\draw (4.5,-1.5) arc (270:360:1.5);
\draw (4.5,-1.5) -- (3.5,-1.5);
\draw (0.5,-1.5) -- (1.5,-1.5);
\draw (0.5,0) node{$\cLL_1$};
\draw (2.5,0) node{$\cLL_2$};
\draw (4.5,0) node{$\cLL_3$};
\draw (1,-2) node{$\cLL_0$};
\draw (4,-2) node{$\cLL_4$};
\draw (-.5,0.25) node{$\gamma_1$};
\draw (1.5,0.25) node{$\gamma_2$};
\draw (3.5,0.25) node{$\gamma_3$};
\draw (5.5,0.25) node{$\gamma_4$};
\draw (2.5,-2.25) node{$\tau$};
\draw (2.5,-1.25) node{$\mathbb{T}_n$};
\end{tikzpicture}}
\end{center}
to the punctured surface such
that the boundaries lie on Lagrangian submanifolds and the limits of the strips at infinity become Reeb chords (with an appropriate scaling for $\tau$). Furthermore we also want this map to satisfy a perturbed Cauchy-Riemann equation \cite{Abou2}. 

Every ribbon tree map also gives us an immersed convex polygon, with edges
lying on the flowed Lagrangians $\phi_H^{i}(\cLL_i)$ and corners
flowed endpoints of the Reeb chords $\phi_H^{i}\gamma_i(1)$. As is explained in \cite{Auroux}, in the case
of surfaces each such immersed convex polygon gives rise to a ribbon tree map.

\begin{lemma}\label{ribbontreemaps}
If $\qpol$ is an embedded quiver then there is a one to one correspondence between ribbon tree maps and sequences of paths $\gamma_1,\dots,\gamma_k,\tau$  in $\cQA(\rect \qpol)$
such that 
\[
\genmu(\gamma_1,\dots, \gamma_k)=\pm \tau
\]
\end{lemma}
\begin{proof}
If $\gamma_1,\dots,\gamma_k$ and $\tau$ are connected by a ribbon tree map
then clearly $[\gamma_1]\star \dots\star [\gamma_k]$ is homotopic to $[\tau]$ because the ribbon tree is contractible.
We can also pull back the vector field $X$ to the ribbon tree to show that
\[
\deg_X \tau = 2 - k + \deg_X \gamma_1 + \dots + \deg_X \gamma_k.
\]
By lemma \ref{degdeg} this implies that any ribbon map corresponds to 
a product 
\[
\genmu(\gamma_1,\dots, \gamma_k)=\pm \tau
\]
Because the universal cover of the punctured surface is contractible
there can be at most one ribbon tree map for each sequence, so correspondence is injective.
To show that it is also surjective we can induction on $k$.  If $k=2$ we can use the same argument as in lemma 4.4 in \cite{Auroux} to show that in each case $\genmu(\gamma_1,\gamma_2)=\tau$ there is also corresponding ribbon tree map.

If $k>2$ then lemma \ref{degdeg} shows that there is a polygon and a sequence of Reeb chords $\beta_1,\gamma_{i+1},\dots,\gamma_{j-1},\beta_l$ that goes around it. Using induction we can assume that there is a ribbon tree map for the Reeb chords with this sequence cut out. To this ribbon tree map we can glue the extra polygon. To show that this gives a new ribbon tree map we have to show that the corresponding polygon that bounds the flowed Lagrangians is convex.

At any puncture the angle between a Lagrangian and a flowed Lagrangian that is open towards the polygon's interior is convex. Therefore the polygon bounded by flowed Lagrangians is convex 
if all Reeb chords $\gamma_i$ wind around different punctures or in other words $\gamma_i\gamma_{i+1}=0$ in $\cQA(\rect \qpol)$. This is the case if $\mu(\gamma_1,\dots,\gamma_k)\ne 0$ by lemma \ref{rules} (3).  
\end{proof}

To describe the $\cA_\infty$ structure we will use proposition \ref{propabouzaid} in the appendix by Abouzaid. This result in combination with \cite{Seidelbook} tells us that in the case when all flowed Lagrangians intersect transversally and lemma \ref{degdeg} holds, the $\cA_\infty$ structure is given by a signed count of
ribbon tree maps (or equivalently immersions of polygons between the flowed Lagrangians)
\[
\mu(\gamma_1\dots,\gamma_k)= \sum_{\TT_k\to \gamma_1,\dots,\gamma_k,\tau} \pm \tau.
\]
Note that in our case there can be at most one term in this sum because of lemma \ref{ribbontreemaps}. 

\begin{remark}
In $\Fuk_X(S\setminus \qpol_0)$ there can also be hom-spaces between Lagrangians that do not intersect transversally or for which \ref{propabouzaid} does not apply. 
In that case the definitions of the hom-spaces and the products become more involved but for our Lagrangians $\cLL_a$ this does not occur 
because all their flowed versions intersect transversally.
\end{remark}

\begin{theorem}
Consider a quiver $\qpol$ that splits a compact surface $S=|\qpol|$ and put a Liouville structure, a Hamiltonian and a vector field $X$ as described above.
\begin{enumerate}
\item
The full subcategory $\fuk_X(\qpol)$ of the wrapped Fukaya category $\Fuk_X(S\setminus \qpol_0)$
is isomorphic to $\cQA_{X}(\rect \qpol)$ as an $\cA_\infty$-category.
\item
If $\Fuk(S\setminus \qpol_0)$ is the $\Z_2$-graded version of $\Fuk_X(S)$ then $\H\Tw\Fuk(|\qpol|\setminus \qpol_0)$ is equivalent to $\H\Tw \cQA(\rect \qpol)$.
\end{enumerate}
\end{theorem}
\begin{proof}
From lemma \ref{ribbontreemaps} it is clear that $\mu$
and $\genmu$ are the same up to signs. Lemma \ref{uptoiso} now allows us to
that they are isomorphic.

The second statement is basically a consequence of the fact that $\H\Tw \cQA (\rect \qpol)$ does not depend on the quiver.
Given any noncontractible Lagrangian in $\Fuk(S\setminus \qpol_0)$ we can add Lagrangians until we get a quiver $Q$ that splits the surface.
This implies that this particular Lagrangian sits in $\H\Tw \cQA (\rect \qpol)$. So every Lagrangian with end points at the punctures sits in a
$\H\Tw \cQA (\rect \qpol)$ and $\H\Tw \cQA (\rect \qpol)\subset \H\Tw\Fuk (S\setminus \qpol_0)\subset \H\Tw \cQA (\rect \qpol)$.
\end{proof}

\section{Consistent dimer models and matrix factorizations}

\subsection{Matrix factorizations in general}
Consider either an algebra $\cA$ or a smooth algebraic variety $X$.
A potential is defined as a central element $W\in Z(A)$ in the former case or a polynomial function $X\to\C$.
The pair $(X,W)$ is called a \emph{commutative Landau-Ginzburg model}, the pair
$(\cA,W)$ is a \emph{noncommutative Landau-Ginzburg model}.

A \emph{matrix factorization} of a Landau-Ginzburg model is a diagram $\bar P$
\[
 \xymatrix{P_0 \ar@<.5ex>[r]^{p_0}&P_1 \ar@<.5ex>[l]^{p_1}}
\]
where $P_i$ are projective $\cA$-modules or vector bundles over $X$ such that
that $p_0p_1=W$ and $p_1p_0=W$.

Given $2$ matrix factorizations $\bar P$ and $\bar Q$
we define $\Hom(\bar P,\bar Q)$ as the following $\Z_2$-graded space of module morphisms/sheaf morphisms
\[
\Hom(\bar P,\bar Q)= \underbrace{\Hom_\cC(P_0,Q_0)\oplus \Hom_\cC(P_1,Q_1)}_{\text{even}}\oplus
\underbrace{\Hom_\cC(P_0,Q_1)\oplus \Hom_\cC(P_1,Q_0)}_{\text{odd}}.
\]
On this space we have a differential of odd degree:
\[
 d\begin{pmatrix}f_{00}&f_{01}\\f_{10}&f_{11}\end{pmatrix} := 
\begin{pmatrix}0&p_0\\p_1&0\end{pmatrix}\begin{pmatrix}f_{00}&f_{01}\\f_{10}&f_{11}\end{pmatrix}
 - \begin{pmatrix}f_{00}&-f_{01}\\-f_{10}&f_{11}\end{pmatrix}\begin{pmatrix}0&q_0\\q_1&0\end{pmatrix}.
\]
It is easy to check that $d^2=0$.

The category of all matrix factorizations is denoted by $\MF(\cA,W)$ or $\MF(X,W)$ and it has the 
structure of a $\Z_2$-graded dg-category or $\cA_\infty$-category. 
In many cases it is interesting to look at small subcategories
of this $\cA_\infty$-category.

\begin{remark}
If $\cA$ is a $\Z$-graded algebra and the central element has degree $2$ we can also make do a $\Z$-graded construction. For a graded matrix factorization $\bar P$ we demand that $P_0$ and $P_1$ are graded
projective modules and $p_0$ and $p_1$ are homogeneous maps of degree $1$.

For two graded matrix factorizations $\bar P,\bar Q$ we set 
\[
\Hom(\bar P,\bar Q)= \Hom_\cC(P_0,Q_0)\oplus \Hom_\cC(P_1,Q_1)\oplus
\Hom_\cC(P_0,Q_1)(1)\oplus \Hom_\cC(P_1,Q_0)(1).
\]
where $(1)$ denotes the degree shift. With this definition $d$ becomes
homogeneous of degree $1$ and $\MF^{gr}(\cA,W)$, the category of all
graded matrix factorizations, becomes a $\Z$-graded
$dg$-category.
\end{remark}

\subsection{The $\cA_\infty$-algebra for a consistent dimer model}\label{aifromdimer}
Let $\qpol$ be a zigzag consistent dimer model on a surface (with nonzero genus) and $A$ its Jacobi algebra.
The Jacobi algebra has a central element coming from the cycles
\[
 \ell := \sum_{v\in\qpol_0} c_v
\]
where $c_v\in \qpol_2$ is a cycle starting in $v$. Note that the relations in the Jacobi algebra
ensure that this expression does not depend on the choices of $v$ and is indeed central.

For each arrow $a$ we can define a matrix factorization of $\ell$
\[
\bar P_a :=  \xymatrix{Ah(a) \ar@<.5ex>[r]^{a}&At(a) \ar@<.5ex>[l]^{\bar a}}
\]
where $\bar a$ is defined such that $a\bar a\in \qpol_2$. In the weak Jacobi algebra $\bar a$ can be expressed as $\ell a^{-1}$.

We define $\mf(\qpol)$ as the dg-category with objects the $\bar P_a$ and morphisms
as in the previous section. As we already know $\mf(\qpol)$ is a dg-algebra and hence its homology
$\H\mf(\qpol)$ allows an $\cA_\infty$-structure. 

If $\ucover\qpol$ is the universal cover of $\qpol$ and $\pi$ the cover map, then we can also define matrix 
factorizations in this cover $\bar P_{\tilde a}$ for every $\tilde a\in \ucover \qpol_1$.
By lifting paths to the universal cover we can easily see that
\[
 \Hom(\bar P_a,\bar P_b) = \bigoplus_{\tilde b, \pi(\tilde b)=b}\Hom(\bar P_{\tilde a},\bar P_{\tilde b}).
\]
where we have chosen a fixed lift of $a$ and vary over the lifts of $b$.
This identification is also compatible with the differentials on both sides.

\subsection{A nice basis for $\H\mf(\ucover\qpol)$}
Recall that the \emph{zig ray} $\cZ^+_{a_0}$ (zag ray $\cZ^-_{a_0}$) of an arrow $a_0$ is the infinite sequence $\dots a_2a_1a_0$ of arrows in the universal cover, $\tilde \qpol$, 
such that $a_{i+1}a_{i}$ sits in a positive 
cycle if $i$ is even (odd) and in a negative cycle if $i$ is odd (even).
Given a piece of a zag ray $\pZ=a_u\dots a_1a_0$ with even length, we define the left (right) opposite path as 
the path formed by all arrows that are not in $\pZ$ but are in positive (negative) cycles which meet $\pZ$.
If $\pZ$ has odd length we take the left opposite path $a_{u-1}\dots a_0$ and the right opposite path of $a_u\dots a_1$.

We can put these two opposite paths in the correct entries
of a $2\times 2$-matrix and add an appropriate minus sign on the upper right entry. 
\se{ 
\vcenter{\xymatrix{
\vtx{}\ar[dr]&&\vtx{}\ar[dr]\ar@/_/@{.>}[ll]&&\vtx{}\ar[dr]\ar@/_/@{.>}[ll]&&\vtx{}\ar@/_/@{.>}[ll]^{\opp_1}&\\
&\vtx{}\ar[ur]&&\vtx{}\ar[ur]\ar@/^/@{.>}[ll]&&\vtx{}\ar[ur]\ar@/^/@{.>}[ll]^{\opp_2}&&
}}
&\text{ gives }
\RZ(\pZ)= \begin{pmatrix}
           0&-\opp_1\\
           \opp_2&0
          \end{pmatrix}\\
\vcenter{\xymatrix{
\vtx{}\ar[dr]&&\vtx{}\ar[dr]\ar@/_/@{.>}[ll]&&\vtx{}\ar[dr]\ar@/_/@{.>}[ll]^{\opp_1}&\\
&\vtx{}\ar[ur]&&\vtx{}\ar[ur]\ar@/^/@{.>}[ll]&&\vtx{}\ar@/^/@{.>}[ll]^{\opp_2}
}}
&\text{ gives }
\RZ(\pZ)= \begin{pmatrix}
           \opp_2&0\\0&\opp_1
          \end{pmatrix}
}
In this way we
obtain an element in $\RZ(\pZ) \in \Hom(P_{a_u},P_{a_0})$ which is an off-diagonal or diagonal matrix,
depending on whether the length of $\cZ$ is even or odd.

\begin{lemma}
If $\qpol$ is a consistent dimer model on a surface with nonpositive Euler charachteristic
and $\cZ$ is a zigzag path then both opposite paths are minimal (i.e. not a multiple of $\ell$).
\end{lemma}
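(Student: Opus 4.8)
The plan is to unpack what it means for an opposite path to be ``a multiple of $\ell$'' and translate this into a statement about the universal cover, where zigzag consistency can be brought to bear. Recall that $\ell = \sum_v c_v$ is the sum of the face cycles, so in the Jacobi algebra a path $p$ from $x$ to $y$ equals $\ell^k \cdot (\text{path})$ precisely when, in the universal cover $\tilde\qpol$, some lift $\tilde p$ factors through (the lift of) the element $\ell^k$. Concretely, $\ell$ acts on $\tilde\qpol$ by sending a vertex $\tilde v$ to the vertex reached after going once around a face; iterating $k$ times, $\ell^k$ moves $\tilde v$ a ``distance $k$'' away in the natural combinatorial metric coming from the zigzag structure. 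So ``$\opp_i$ is minimal'' should be equivalent to saying that the lift of $\opp_i$ in $\tilde\qpol$ is a shortest path (in the appropriate sense) between its endpoints, i.e. does not wrap around any face more than necessary.

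First I would set up notation carefully in the universal cover: fix the zigzag path $\tilde\cZ = \tilde a_u \cdots \tilde a_1 \tilde a_0$ and describe the left and right opposite paths $\opp_1, \opp_2$ as explicit paths in $\tilde\qpol$ built from the arrows of the positive/negative cycles meeting $\tilde\cZ$. Next I would characterize divisibility by $\ell$ homologically: using that $A$ is $3$-Calabi--Yau (Theorem for zigzag-consistent dimers) and that $\bar P_a$ is a matrix factorization of $\ell$, the statement ``$\opp_i = \ell \cdot q$ for some path $q$'' means the corresponding morphism of matrix factorizations is null-homotopic, i.e. is in the image of the differential $d$. So I would reformulate: $\RZ(\pZ)$ has minimal entries iff it represents a nonzero class in $\H\mf(\tilde\qpol)$.

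Then comes the core argument, which is purely combinatorial on $\tilde\qpol$. Suppose for contradiction that one of the opposite paths, say $\opp_1$, equals $\ell \cdot q$ in the Jacobi algebra; lifting to $\tilde\qpol$ this means the lift of $\opp_1$ traverses a full face cycle and can be ``pushed across'' it. I would trace what such a face cycle must look like relative to the zigzag path $\tilde\cZ$: the arrows of $\opp_1$ are, by definition, arrows lying in the positive cycles that meet $\tilde\cZ$ but are not themselves in $\tilde\cZ$. For $\opp_1$ to be divisible by $\ell$, two of these positive cycles along $\tilde\cZ$ would have to coincide in the universal cover — or, more precisely, the zig ray and the zag ray emanating from some arrow in the configuration would have to reintersect. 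This is exactly forbidden by zigzag consistency, which says $\pZ^-_{\tilde a}[i] = \pZ^+_{\tilde a}[j] \implies i=j=0$. I would also need to use the nonpositive Euler characteristic hypothesis to rule out the degenerate sphere case where the universal cover is finite (so that ``wrapping around'' is automatic and the lemma would fail) — that is where the assumption enters.

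The main obstacle I anticipate is making precise the dictionary between ``divisible by $\ell$ in the Jacobi algebra'' and a clean combinatorial condition on lifts in $\tilde\qpol$, and then identifying exactly which pair of zigzag rays is forced to reintersect when that condition holds. The bookkeeping of which arrows belong to $\opp_1$ versus $\opp_2$, and tracking the even/odd length cases of $\pZ$ separately (they produce off-diagonal versus diagonal $\RZ(\pZ)$, and the opposite paths are defined slightly differently), will require care. I expect the actual contradiction with zigzag consistency to be short once the setup is right; the work is almost entirely in translating the algebraic ``minimality'' statement into the geometry of the universal cover and checking the Euler characteristic hypothesis is used to exclude the finite cover.
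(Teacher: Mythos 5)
Your proposal takes a genuinely different route from the paper, and I think that route has a couple of real gaps.

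The key ingredient the paper uses, which your plan does not mention, is Davison's structure theorem for consistent dimers: in the universal cover there is a unique minimal path between any two vertices, and every other path between them equals this minimal path times a power of $\ell$. This is what gives a concrete, combinatorial handle on the otherwise elusive condition ``is a multiple of $\ell$ in $\cJA(\qpol)$''. With it, the lemma reduces to showing that the minimal path from $h(\cZ)$ to $t(\cZ)$ is exactly the opposite path, and the paper proves this by induction on the number of faces enclosed by the minimal path $p$ and $\cZ$: if there were more than one face, the zigzag path through the penultimate arrow of $\cZ$ would cut the region and identify an initial segment $p'a$ of $p$, by the inductive hypothesis, with an opposite path of the shorter zigzag; this in turn produces a reduction of $p$, contradicting minimality. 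Your plan lacks any substitute for this criterion; ``divisible by $\ell$'' is a statement about the equivalence class of the path modulo F-term relations, not about the raw lift, so the claim that divisibility would make ``two of these positive cycles along $\tilde\cZ$ coincide'' or make the zig and zag rays re-intersect does not follow and is, as far as I can see, false: applying F-term relations can move a path across many distinct faces, none of which need coincide, and the final multiple-of-$\ell$ configuration need not be visible in the original unreduced path.

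The homological reformulation is also not a useful intermediate step, and is in danger of circularity. The implication you correctly identify --- $\opp_i = \ell q$ forces $\RZ(\pZ)$ to be a coboundary --- goes the right way, but to get a contradiction you would then need to know a priori that $\RZ(\pZ)$ is a \emph{nonzero} class in $\H\Hom$. That nonvanishing is exactly what the subsequent basis lemma establishes, and its proof already takes for granted that the opposite paths are minimal; so you cannot appeal to it here. Moreover your stated ``iff'' is too strong even as a statement: minimality of $f_{00}$ does not imply $\RZ(\pZ)$ is nonzero, since the coboundary condition is that $f_{00}$ is a left multiple of $a$ or a right multiple of $\bar b$, which is weaker than being a multiple of $\ell$ --- e.g.\ $f_{00}=ag$ can be minimal while still being a coboundary. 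So the reformulation does not close the gap; it merely restates it. If you want to salvage your plan, the right move is to import Davison's minimal-path result explicitly and then run the paper's face-counting induction, which is where the combinatorial content actually lives; the zigzag-consistency hypothesis enters through that cited result and through the well-definedness of the enclosed region, not through a direct re-intersection of zig and zag rays.
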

\begin{proof}
From \cite{davison} we know there is a unique minimal path between every two vertices in the universal cover and
such that every other path between these vertices is a power of $\ell$ times this path.

Let $p$ be the minimal path in the universal cover that connects $h(\cZ)$ to $t(\cZ)$ and let $a$ be the last arrow of $\cZ$.
First suppose that $p$ does not cross $\cZ$ and assume
we cannot apply any reductions to $p$ that make the piece that $p$ and $\cZ$ cut out smaller.
We will prove the lemma by induction on the number of faces inside this piece.

If this number is $1$ we are done because then the path is precisely the opposite path.
If the number is bigger than $1$, then the other zigzag path $\cZ'$ that goes through the penultimate arrow of $\cZ$ 
must cut the piece in two. Let $b$ be the arrow where $\cZ'$ intersects $p$ and denote by $p'$ the path which consists of 
the piece of $p$ before $b$. If $p$ is minimal then so must $p'a$ be and by induction it must be the path that goes opposite
to $\cZ'$.

The arrow $b$ makes that $bp'$ and a forteriori $p$ can be reduced.
\begin{center}
\resizebox{5cm}{!}{
\begin{tikzpicture}
\draw [-latex] (-.5,-.5) -- (0,0) -- (.5,-.5) -- (1,0) -- (1.5,-.5) -- (2,0) -- (2.5,-.5) -- (3,0)-- (3.5,-.5) -- (4,0)-- (4.5,-.5) -- (5,0)-- (5.5,-.5);
\draw [latex-] (0,0) arc (180:90:2.5);

\draw [-latex] (2,2.45) -- (2.5,2) -- (2.5,1.5) -- (3,1.5) -- (3,1) -- (3.5,1) -- (3.5,.5)-- (4,.5) -- (4,0);
\draw [latex-] (2,2.45) -- (2.5,2.5) -- (2.5,3);
\draw [latex-, very thick] (2,2.45) -- (2.5,2.5);
\draw [-latex,very thick] (4.5,-.5) -- (5,0);
\draw [-latex,very thick] (5,0) -- (4,.5);
\draw [-latex,very thick] (4,.5) arc (0:90:.5);
\draw [-latex,very thick] (3.5,1) arc (0:90:.5);
\draw [-latex,very thick] (3,1.5) arc (0:90:.5);
\draw [-latex,very thick] (2.5,2) -- (2.5,2.5);
\draw (4.5,-.1) node{$a$};
\draw (3.0,.75) node{$\pZ'$};
\draw (1.5,0) node{$\pZ$};
\draw (2.25,2.75) node{$b$};
\draw (2.25,2.75) node{$b$};
\draw (1.25,1.75) node{$p$};
\draw (3.5,1.75) node{$\mathbf{p'}$};
\end{tikzpicture}
}
\end{center}
If $p$ does cross $\cZ$ in an arrow or a vertex, we can 
apply the above argument for each of the pieces separately and get the same result.
\end{proof}

\begin{lemma}
The $\RZ(\pZ)$ for which $\pZ$ starts in $a$ and ends in $b$ form a basis for $\H\Hom(\bar P_b,\bar P_a)$.
\end{lemma}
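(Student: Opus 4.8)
The plan is to pass to the universal cover and then reduce the computation to a small linear algebra problem over the polynomial ring $\C[\ell]$. First I would use the decomposition $\Hom(\bar P_b,\bar P_a)=\bigoplus_{\tilde b}\Hom(\bar P_{\tilde b},\bar P_{\tilde a})$ from the previous subsection, which is compatible with the differentials; here $\tilde a$ is a fixed lift of $a$ and $\tilde b$ runs over the lifts of $b$. A zigzag path from $a$ to $b$ lifts, once $\tilde a$ is fixed, to a unique finite piece of the zig ray or the zag ray of $\tilde a$, and since the deck group acts freely distinct pieces end at distinct arrows. Because $\qpol$ is zigzag consistent, the zig ray and the zag ray of $\tilde a$ meet only in $\tilde a$, so for each lift $\tilde b$ there is \emph{at most one} zigzag piece $\pZ$ running from $\tilde a$ to $\tilde b$ (and exactly one, the trivial piece giving $\Id_{\bar P_{\tilde a}}$, when $\tilde b=\tilde a$). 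It therefore suffices to prove that $\H\Hom(\bar P_{\tilde b},\bar P_{\tilde a})$ is one dimensional and spanned by $\RZ(\pZ)$ when such a piece $\pZ$ exists, and is zero otherwise.

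Next I would make the complex $\Hom(\bar P_{\tilde b},\bar P_{\tilde a})$ explicit. Write $\tilde A=\cJA(\tilde\qpol)$. By the result of Davison quoted above, each of the four hom-pieces $h(b)\tilde Ah(a)$, $t(b)\tilde At(a)$, $h(b)\tilde At(a)$, $t(b)\tilde Ah(a)$ is a free $\C[\ell]$-module of rank one, generated by the unique minimal path between the two vertices in question, and the differential of the Hom complex is, up to signs, assembled from the four left/right multiplications by $a,\bar a,b,\bar b$ between these rank one modules. Each such multiplication carries the minimal generator to $\ell^{k}$ times the minimal generator of the target for some $k\ge 0$; because $a\bar a=\bar a a=b\bar b=\bar b b=\ell$, the exponent of any one of these multiplications and of its "reverse" always sum to $1$, so every such exponent equals $0$ or $1$, and $d^{2}=0$ imposes one further linear relation among them. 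As a sanity check, after inverting $\ell$ the factorization $\bar P_{\tilde a}$ becomes a matrix factorization of a unit, hence contractible, so $\H\Hom(\bar P_{\tilde b},\bar P_{\tilde a})$ is $\ell$-torsion and in particular finite dimensional over $\C$.

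With these exponents as the only parameters (subject to the sum-to-one relations and $d^2=0$), computing $\H\Hom(\bar P_{\tilde b},\bar P_{\tilde a})$ is a finite case check over $\C[\ell]$, and I expect the outcome to be: in each $\Z_2$-degree the homology has dimension at most one, it is nonzero in at most one of the two degrees, and it is nonzero precisely when the exponents are in the ``extreme'' configuration, i.e. all the multiplications landing in one fixed column vanish (equivalently those landing in the other column are all equal to $1$). In that case the homology is spanned by the class of the diagonal, respectively off-diagonal, matrix whose nonzero entries are the minimal paths between the relevant vertices — that is, by a matrix of exactly the shape used to define $\RZ$.

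The main difficulty, and the only point where the detailed local structure of the universal cover is really used, will be matching this ``extreme exponent'' condition with the existence of a zigzag piece $\pZ$ from $\tilde a$ to $\tilde b$ and identifying the generator above with $\RZ(\pZ)$. For the implication ``$\pZ$ exists $\Rightarrow$ extreme exponents'' I would invoke the previous lemma: when $\pZ$ exists its two opposite paths are the minimal paths in their hom-pieces, so the matrix of minimal paths from the previous step is precisely $\RZ(\pZ)$; a short inspection of the faces adjacent to the first arrow of $\pZ$, using consistency, shows that prepending or appending the ``wrong'' arrow to a minimal path picks up a factor of $\ell$, which is exactly the extreme configuration, and one checks $d\,\RZ(\pZ)=0$ directly from the Jacobi relations $r_a=r_+-r_-$. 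For the converse I would, starting from the extreme exponents, read off arrow by arrow, from the shape of the minimal paths, a zig or zag ray piece joining $\tilde a$ to $\tilde b$. Finally, summing over the lifts $\tilde b$ of $b$ yields that $\H\Hom(\bar P_b,\bar P_a)=\bigoplus_{\tilde b}\H\Hom(\bar P_{\tilde b},\bar P_{\tilde a})$ has as a basis the elements $\RZ(\pZ)$, with $\pZ$ running over the zigzag pieces from $a$ to $b$.
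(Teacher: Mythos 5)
Your approach is genuinely different from the paper's: you organize the computation around the $\C[\ell]$-module structure of the four hom-pieces in the universal cover (using Davison's unique-minimal-path result) and try to reduce the lemma to a finite check on which of the connecting maps carry an $\ell$-factor, followed by a translation of the ``extreme'' exponent pattern into existence of a zigzag piece. The paper instead works directly with the matrices: by degree reasons it restricts attention to diagonal/off-diagonal candidates, observes that a cocycle forces both entries $f_{00},f_{11}$ to be minimal paths with $\bar a f_{00}b\in\ell\cJA$, shows that $a,b,f_{00},f_{11}$ bound a simply connected disc in the universal cover, shrinks this disc to be minimal, and then traces the zigzag path of $a$ through the disc to show it must exit at $b$ at an even position.

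There is a genuine gap at the heart of your proposal, and it sits exactly where the paper does its work. First, the ``finite case check over $\C[\ell]$'' is asserted but not carried out: you haven't pinned down the $2\times 2$ matrices of the two halves of $d$ (signs, which entry gets which $\ell$-exponent, the relation forced by $d^2=0$), and the claim that nonzero homology occurs ``precisely in the extreme configuration'' is not obvious — some configurations consistent with $d^2=0$ do not look extreme yet it is not clear they yield zero homology without a careful computation. Second, and more seriously, the translation ``extreme exponents $\Longleftrightarrow$ a zigzag piece $\pZ$ from $\tilde a$ to $\tilde b$ exists'' is left as ``a short inspection of the faces'' and ``read off arrow by arrow.'' The converse direction of this equivalence is the substantive content of the lemma: the paper establishes it by showing (i) the minimal paths $f_{00},f_{11}$ cannot meet, hence bound a simply connected region $S$ together with $a$ and $b$, (ii) one may take $S$ minimal, and (iii) the zigzag path issuing from $a$ cannot exit $S$ through $f_{00}$ or $f_{11}$ without contradicting minimality, so it must exit through $b$, at even position. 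None of this is captured by your sketch. A smaller issue: the claim that consistency gives \emph{at most one} zigzag piece from $\tilde a$ to a fixed lift $\tilde b$ uses injectivity of the zig and zag rays in the universal cover, which is not what the zigzag consistency axiom literally says (it only forbids the zig and zag rays from meeting each other away from $\tilde a$); this needs a separate argument or a citation.
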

\begin{proof}
Because of $\Z_2$-degree and path-degree reasons we can look for basis elements of the forms
$$\left(\begin{smallmatrix}f_{00}&0\\0&f_{11}\end{smallmatrix}\right),~~\left(\begin{smallmatrix}0&f_{01}\\f_{10}&0\end{smallmatrix}\right)$$ 
where the $f_{ij}$ are paths up to a sign.
We will only treat the first case.
Being in the kernel of $d$ implies that $f_{11}=\del a(f_{00}b)=(\bar af_{00})\del{\bar b}\in \cJA(\qpol)$.
These two conditions are the same as asking that $\bar a f_{00}b$ is a multiple of $\ell$.
Being in the image of $d$ implies that $f_{00}$ is a left multiple of $a$ or a right multiple of $\bar b$ (which is equivalent to
$f_{11}$ being a right multiple of $b$ or a left multiple of $\bar a$). 

So we are looking for paths $f_{00}$ such that $\bar a f_{00}b$ is a multiple of $\ell$ but
neither $\bar a f_{00}$ or  $f_{00}b$ is a multiple of $\ell$. 
In particular this means that $f_{00}$ and $f_{11}$ are both minimal paths.

Now look at the paths $f_{00}$ and $f_{11}$ in the universal cover. If these $2$ paths intersect at a vertex $v$ we can
split $f_{ii}$ in two $f_{iv}vf_{vi}$. Now $\bar a f_{0v}$ and $f_{1v}$ are both minimal and run between
the same vertices so they must be the same. But then $f_{11}=a\bar a f_{0v}f_{v1}$ is a left multiple of $\bar a$ which is impossible.
So $a$, $b$, $f_{00}$ and $f_{11}$ bound a simply connected piece $S$ in the universal cover. After applying the relations
we can assume that this piece is as small as possible.

Look at the zigzag path $\pZ$ 
that starts from $a$ and enters this simply connected piece. This zigzag path must leave the piece at an arrow $c$.
\begin{center}
\resizebox{!}{3cm}{
\begin{tikzpicture}
\draw[-latex] (0,0)--(0,1);
\draw[-latex] (3,0)--(3,1);
\draw[-latex,dashed] plot[tension=.5] coordinates{(3,0) (2,-1) (1,-1) (0,0)};
\draw[-latex,dashed] plot[tension=.5] coordinates{(3,1) (2,2) (1,2) (0,1)};
\draw[latex-] (1,2.5) -- (1,2) -- (2,2) -- (.5,1.5) -- (.5,.5) -- (0,1);
\draw (-.25,.5) node{$a$};
\draw (3.25,.5) node{$b$};
\draw (1.5,2.3) node{$c$};
\draw (1.6,1.5) node{$d$};
\draw (.3,1.8) node{$f_{00}$};
\draw (.3,-.8) node{$f_{11}$};
\end{tikzpicture}}
\end{center}
If $c$ is in $f_{00}$ then we can split $f_{00}=f_{0c}cf_{c0}$. Because of minimality $f_{0c}$ and the opposite path running 
along the zigzag path from $h(a)$ to $h(c)$ must be equal. But then $f_{0c}c$ ends in  $\bar d$ where $d$ precedes
$c$ in the zigzag path. This contradicts the fact that we chose $S$ as small as possible.
Similarly $\pZ$ cannot leave $S$ through an arrow of $f_{11}$, so $b$ must lie on the zigzag path through $a$.
In order for $\pZ$ to leave $S$ at $b$, we must have that $\pZ[i]=b$ with $b$ even. 
\end{proof}
\begin{corollary}
In the universal cover $\ucover \qpol$ the space $\H\Hom(\bar P_{\tilde b},\bar P_{\tilde a})$ is either
zero dimensional or one dimensional, depending on whether $\tilde b$ sits in a zigzag ray from $\tilde a$ or not.
\end{corollary}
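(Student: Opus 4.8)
The plan is to reduce the statement to a count of zigzag paths between $\tilde a$ and $\tilde b$ in $\tilde\qpol$, via the preceding basis lemma, and then to extract that count from the structure of zigzag rays that zigzag consistency provides. First I would transport the basis lemma to the universal cover. The decomposition $\Hom(\bar P_a,\bar P_b)=\bigoplus_{\tilde b,\,\pi(\tilde b)=b}\Hom(\bar P_{\tilde a},\bar P_{\tilde b})$ is compatible with the differentials and hence descends to $\H\Hom$; a zigzag path $\pZ$ in $\qpol$ from $a$ to $b$ lifts uniquely, once a lift $\tilde a$ of its initial arrow is fixed, to a zigzag path $\tilde\pZ$ from $\tilde a$ to a lift $\tilde b$ of $b$, and since $\RZ(\pZ)$ is assembled from the two opposite paths (which are ordinary paths), $\RZ(\pZ)$ lifts to $\RZ(\tilde\pZ)$ and lands in the $\tilde b$-summand. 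So the preceding lemma yields: the $\RZ(\tilde\pZ)$ for $\tilde\pZ$ a zigzag path from $\tilde a$ to $\tilde b$ form a basis of $\H\Hom(\bar P_{\tilde b},\bar P_{\tilde a})$. (Equivalently, one reruns that proof verbatim in $\tilde\qpol$, since it already takes place in the universal cover.) It therefore suffices to show there is at most one such $\tilde\pZ$, and exactly one precisely when $\tilde b$ lies on a zigzag ray from $\tilde a$.

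Next, a zigzag path $\tilde\pZ=a_u\cdots a_1a_0$ with $a_0=\tilde a$ is an initial segment of one of exactly two zigzag rays emanating from $\tilde a$: its first step must continue $\tilde a$ through the positive cycle or the negative cycle containing $\tilde a$, producing $\cZ^+_{\tilde a}$ or $\cZ^-_{\tilde a}$, and every later step is then forced by the alternating condition. Hence the arrows reachable from $\tilde a$ by a zigzag path are exactly those appearing on $\cZ^+_{\tilde a}$ or on $\cZ^-_{\tilde a}$. Zigzag consistency says these two rays meet only at $\tilde a$, so for $\tilde b\ne \tilde a$ at most one of them passes through $\tilde b$; and since the surface has nonpositive Euler characteristic each ray is an embedded, non-periodic line, so it passes through $\tilde b$ at most once. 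For $\tilde b=\tilde a$ the only zigzag path is the trivial one, whose $\RZ$ is the identity, so $\H\Hom$ is again one-dimensional. Combining, $\H\Hom(\bar P_{\tilde b},\bar P_{\tilde a})$ is one-dimensional exactly when $\tilde b$ sits on $\cZ^+_{\tilde a}$ or $\cZ^-_{\tilde a}$, and zero-dimensional otherwise, which is the assertion.

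The step I expect to need the most care is the claim that a single ray $\cZ^{\pm}_{\tilde a}$ contains no repeated arrow. A repetition with the two occurrences of equal parity would force the whole bi-infinite zigzag line to be periodic, hence a closed curve in the simply connected cover; a repetition of opposite parity would force $\cZ^+_{\tilde c}$ and $\cZ^-_{\tilde c}$, for $\tilde c$ the repeated arrow, to share an entire tail, contradicting the consistency axiom directly. The equal-parity case is the delicate one: it is precisely the standard non-periodicity of zigzag rays for consistent dimer models on surfaces of nonpositive Euler characteristic, which I would invoke from \cite{davison,Bocklandtcons} (it is also implicit in the ``smallest simply connected piece'' argument used in the preceding lemma).
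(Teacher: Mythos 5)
The paper states this corollary without an explicit proof, treating it as immediate from the preceding lemma; your argument correctly supplies the missing reasoning and is, as far as I can tell, the intended one. The transport of the basis lemma to the universal cover via the direct-sum decomposition $\Hom(\bar P_a,\bar P_b)=\bigoplus_{\tilde b}\Hom(\bar P_{\tilde a},\bar P_{\tilde b})$ is clean (the paper already sets this up), and the reduction to counting zigzag paths from $\tilde a$ to $\tilde b$ is exactly what the lemma gives. Your dichotomy is also right: a zigzag path starting at $\tilde a$ is forced to be an initial segment of $\cZ^+_{\tilde a}$ or $\cZ^-_{\tilde a}$; the zigzag-consistency axiom kills the possibility of both rays passing through $\tilde b\ne\tilde a$; and self-intersection of a single ray is the remaining case. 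You are right that this last step is the delicate one. The axiom as stated in the paper only constrains $\cZ^+_{\tilde a}$ against $\cZ^-_{\tilde a}$, not a ray against itself, and your opposite-parity argument (a repeated arrow $\tilde c$ at positions of different parity forces $\cZ^+_{\tilde c}$ and $\cZ^-_{\tilde c}$ to share a tail) genuinely reduces that subcase to the axiom. The equal-parity subcase, however, is a bit understated in your sketch: saying it gives ``a closed curve in the simply connected cover'' is not in itself a contradiction (positive faces bound disks in $\tilde S$ too), so you really do need the non-periodicity of zigzag rays for consistent dimers, which is an independent fact. Your instinct to invoke \cite{davison,Bocklandtcons} there is appropriate, and correctly flagging this as the point requiring outside input is the right call; the rest of the argument is self-contained and matches what the paper leaves implicit.
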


Now we are going to express the $A_\infty$-products in terms of these bases.
To do this we will expand this basis to a full basis of $\Hom(\bar P_b,\bar P_a)$. 

Consider the set of matrices
\begin{itemize}
 \item $
 \begin{pmatrix}
  f_{00}&0\\
0&0
 \end{pmatrix}$,
 $\begin{pmatrix}
  0&f_{01}\\
0&0
 \end{pmatrix}$ where $f_{00}$ and $f_{01}$ are paths.
 \item $
 \begin{pmatrix}
  0&0\\
0&f_{11}
 \end{pmatrix}$,
 $\begin{pmatrix}
  0&0\\
f_{10}&0
 \end{pmatrix}$ where $f_{11}$ and $f_{10}$ are paths and
$af_{11}\del{b},af_{10}\del{\bar b}\cancel \in \cJA(\qpol)$
\end{itemize}
and let $U$ be the span of these matrices.

By construction it is clear that the matrices above are linearly independent and
\[
 \Hom(P_a,P_b) = U \oplus \H \Hom(P_a,P_b) \oplus d(U). 
\]
This splitting allows us to define a codifferential
\[
 h:\Hom(P_a,P_b)\to \Hom(P_a,P_b):u_1\oplus \RZ(\pZ_a^{\pm,j})\oplus du_2 \mapsto u_2\oplus 0 \oplus 0.
\]

\begin{lemma}
\begin{itemize}
 \item[]
 \item $dhd=d$,
 \item $h^2=0$,
 \item $h(\RZ(\pZ))=0$.
 \item $h$ respects the split $\Hom(\bar P_a,\bar P_b) = \bigoplus_{\tilde b, \pi(\tilde b)=b}\Hom(\bar P_{\tilde a},\bar P_{\tilde b})$. 
 \item For any cover $d$ and $h$ are equivariant with respect to the group action of the deck transformations.  
\end{itemize}
\end{lemma}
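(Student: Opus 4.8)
The plan is to read off all four statements from the direct-sum decomposition
\[
\Hom(\bar P_a,\bar P_b)=U\oplus \H\Hom(\bar P_a,\bar P_b)\oplus d(U)
\]
recorded just above the lemma. First, $d$ annihilates both $\H\Hom(\bar P_a,\bar P_b)$ and $d(U)$: the former because its elements are cycles (as one sees from the proof that the $\RZ(\pZ)$ form a basis of the homology), the latter because $d^2=0$; hence $d(\Hom(\bar P_a,\bar P_b))=d(U)$. That same proof exhibits $\H\Hom(\bar P_a,\bar P_b)$ as a complement of $d(U)$ inside $\ker d$, so $\ker d=\H\Hom(\bar P_a,\bar P_b)\oplus d(U)$; combined with the displayed decomposition this gives $U\cap\ker d=0$, that is, $d$ restricts to an isomorphism $U\to d(U)$. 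In particular $h$ is well defined, being by construction the inverse of $d|_U$ on the summand $d(U)$ and zero on $U\oplus\H\Hom(\bar P_a,\bar P_b)$.

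Granting this, three of the four points are immediate. The image of $h$ lies in $U$ and $h$ vanishes on $U$, so $h^2=0$. Every $\RZ(\pZ)$ lies in the middle summand $\H\Hom(\bar P_a,\bar P_b)$, which $h$ kills, so $h(\RZ(\pZ))=0$. For $dhd=d$, write $x=u_1\oplus\RZ(\pZ)\oplus du_2$; then $dx=du_1$ because $d$ kills the other two summands, and since $du_1\in d(U)$ we get $h(dx)=u_1$, hence $dh(dx)=du_1=dx$.

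The remaining point is that $h$ respects the splitting
\[
\Hom(\bar P_a,\bar P_b)=\bigoplus_{\tilde b,\ \pi(\tilde b)=b}\Hom(\bar P_{\tilde a},\bar P_{\tilde b}),
\]
and I would prove it by checking that each of the three summands $U$, $\H\Hom(\bar P_a,\bar P_b)$, $d(U)$ is the direct sum of its intersections with the spaces $\Hom(\bar P_{\tilde a},\bar P_{\tilde b})$. For $U$: its spanning matrices have path entries, and once the lift $\tilde a$ is fixed every path out of $a$ lifts uniquely, so each spanning matrix lies in exactly one $\Hom(\bar P_{\tilde a},\bar P_{\tilde b})$. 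For $\H\Hom(\bar P_a,\bar P_b)$: the basis $\{\RZ(\pZ)\}$ from the previous lemma is indexed by zigzag paths from $a$ to $b$, each of which lifts uniquely to a zigzag ray starting at $\tilde a$, with its opposite paths lifting along with it; by the corollary each lift $\tilde b$ carries at most one such basis vector, so the basis is partitioned according to the lifts. For $d(U)$: by the compatibility of the identification with the differentials (noted earlier in the text), $d$ sends the $\tilde b$-part of $U$ into $\Hom(\bar P_{\tilde a},\bar P_{\tilde b})$. As all three summands are graded by the lifts and $h$ is built purely from this decomposition, $h$ respects the splitting.

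The main obstacle is this last paragraph, in particular the verification that the chosen complements $U$ and $\H\Hom(\bar P_a,\bar P_b)$ are themselves graded by the lifts; this is the only step where the geometry of $\tilde\qpol$ — unique lifting of paths and of zigzag rays — actually enters, the rest being diagram chasing.
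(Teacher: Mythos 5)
Your proof is correct, and it fills in exactly the details that the paper dismisses with the one-line remark that the facts ``follow easily from the construction'': namely that $d$ kills the summands $\H\Hom$ and $d(U)$, that $d|_U$ is an isomorphism onto $d(U)$, and that each of the three summands is graded by the lifts $\tilde b$. There is no alternative route here; you have simply made explicit the diagram-chase and the unique-lifting argument that the paper leaves implicit.
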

\begin{proof}
These facts follow easily from the construction. The last statement is in fact equivalent to the penultimate.
\end{proof}

We are now going to determine the $A_{\infty}$-structure on $\H\mf(\qpol)$.
First we start with the ordinary multiplication.
\begin{theorem}
Suppose $\pZ_1$ is a zigzag path from $b$ to $a$ and $\pZ_2$ is a zigzag path from $a$ to $c$
\[
 \mu(\RZ(\pZ_1),\RZ(\pZ_2))=
\begin{cases}
\RZ(\pZ_2a^{-1}\pZ_1)&\text{if $\pZ_2a^{-1}\pZ_1$ is a zigzag path} \\
0&\text{otherwise}
\end{cases}
\]
\end{theorem}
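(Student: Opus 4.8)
The plan is to compute the product in the universal cover $\tilde\qpol$ and transport the answer back down. Since the differential, the composition, and the chosen codifferential $h$ are all compatible with the decomposition $\Hom(\bar P_a,\bar P_b)=\bigoplus_{\tilde b}\Hom(\bar P_{\tilde a},\bar P_{\tilde b})$, it suffices to establish the identity for the lifts: fix a lift $\tilde a$ of $a$ realizing the terminal arrow of a lift $\tilde\pZ_1$ of $\pZ_1$ and the initial arrow of a lift $\tilde\pZ_2$ of $\pZ_2$, let $\tilde b,\tilde c$ be the resulting lifts of $b,c$, and show
\[
\mu\big(\RZ(\tilde\pZ_1),\RZ(\tilde\pZ_2)\big)=\begin{cases}\RZ(\tilde\pZ_2\tilde a^{-1}\tilde\pZ_1)&\text{if $\tilde\pZ_2\tilde a^{-1}\tilde\pZ_1$ is a zigzag path},\\ 0&\text{otherwise}\end{cases}
\]
in $\H\Hom(\bar P_{\tilde c},\bar P_{\tilde b})$. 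Because the only rooted tree with two leaves contributing to the minimal model sends $(f_1,f_2)\mapsto\pi\mu(\iota f_1,\iota f_2)$, and $\iota$ is the inclusion of the chosen representatives $\RZ(\tilde\pZ)$, the left-hand side equals $\pi\big(\RZ(\tilde\pZ_1)\cdot\RZ(\tilde\pZ_2)\big)$, with $\cdot$ the composition of morphisms of matrix factorizations and $\pi=1-dh-hd$; since $\RZ(\tilde\pZ_1)$ and $\RZ(\tilde\pZ_2)$ are $d$-closed and $d$ is a derivation, $\pi$ acts on this product as $1-dh$.

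Next I would multiply out the two $2\times2$ matrices $\RZ(\tilde\pZ_1)$ and $\RZ(\tilde\pZ_2)$, splitting into the four cases coming from the parities of $|\tilde\pZ_1|$ and $|\tilde\pZ_2|$ (which govern whether each $\RZ$ is diagonal or anti-diagonal). In every case each entry of the product is a single concatenation of an opposite path of $\tilde\pZ_1$ with an opposite path of $\tilde\pZ_2$, and such a concatenation is a nonzero path exactly when the two opposite paths agree on which of the vertices $h(\tilde a),t(\tilde a)$ they should be joined at. The local picture at $\tilde a$ is the heart of the matter: the pair of arrows of $\tilde\pZ_1$ incident to $\tilde a$ lies in one of the two boundary cycles through $\tilde a$, the pair of arrows of $\tilde\pZ_2$ incident to $\tilde a$ lies in one of them as well, and — as $\tilde\pZ_1,\tilde\pZ_2$ are already zigzag paths — the glued path $\tilde\pZ_2\tilde a^{-1}\tilde\pZ_1$ is a zigzag path if and only if these two cycles are distinct. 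A short computation with the remainders of these cycles shows that when they coincide the opposite paths on the two sides want to be joined at $h(\tilde a)$ on one side and at $t(\tilde a)$ on the other, so no entry composes and $\RZ(\tilde\pZ_1)\cdot\RZ(\tilde\pZ_2)=0$; when they are distinct the positive and the negative boundary cycles through $\tilde a$ are each used exactly once — here one invokes that a zigzag path in $\tilde\qpol$ never meets a boundary cycle twice (zigzag consistency) — so the opposite paths splice and the surviving entries are precisely the left and right opposite paths of $\tilde\pZ_2\tilde a^{-1}\tilde\pZ_1$.

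Finally I would identify the resulting matrix with $\RZ(\tilde\pZ_2\tilde a^{-1}\tilde\pZ_1)$: the diagonal/anti-diagonal shape matches because $|\tilde\pZ_2\tilde a^{-1}\tilde\pZ_1|=|\tilde\pZ_1|+|\tilde\pZ_2|-1$ has the right parity in all four cases; one checks that the boundary-truncation conventions in the definition of the opposite paths of an odd-length zigzag path are exactly what makes the pieces splice; and one verifies that the signs (the $\pm$ in the definition of $\RZ$, the $(-1)^\sigma$ of $[\mathrm M_k]$, and the sign of $h$) combine to no overall sign. In the zigzag case $h(\RZ(\tilde\pZ_2\tilde a^{-1}\tilde\pZ_1))=0$ and the element is $d$-closed, so $\pi$ fixes it and $\mu(\RZ(\tilde\pZ_1),\RZ(\tilde\pZ_2))=\RZ(\tilde\pZ_2\tilde a^{-1}\tilde\pZ_1)$; in the other case $\pi(0)=0$. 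Pushing the identity forward to $\qpol$ gives the statement.

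The hard part will be the middle paragraph: the combinatorics of opposite paths under gluing, i.e. the case analysis at the junction $\tilde a$ together with the parity/truncation and sign bookkeeping; the homological reduction around it is routine. If one prefers to avoid the endpoint-matching argument in the non-zigzag case, one can instead observe that the concatenated opposite path then runs once around a full boundary cycle and is therefore a multiple of $\ell$; since $\ell$ is a non-zero-divisor in $\cJA(\qpol)$ by \cite{davison}, multiplication by $\ell$ is null-homotopic, so $\pi$ again kills the product.
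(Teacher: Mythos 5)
For the case where $\pZ_2 a^{-1}\pZ_1$ \emph{is} a zigzag path, your approach matches the paper's: reduce to $\pi(\RZ(\pZ_1)\cdot\RZ(\pZ_2))$ via the minimal-model formula and check that the matrix product equals $\RZ(\pZ_2 a^{-1}\pZ_1)$ (the paper simply asserts this with ``one can easily check'', so you've correctly identified the four-case parity/sign bookkeeping as the work).

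For the ``otherwise'' case your route genuinely diverges from the paper's, and your primary argument has a gap. You claim that when the junction fails to alternate, ``no entry composes and $\RZ(\tilde\pZ_1)\cdot\RZ(\tilde\pZ_2)=0$.'' But this cannot happen: $\RZ(\pZ_1)$ and $\RZ(\pZ_2)$ are morphisms of matrix factorizations $\bar P_a\to\bar P_b$ and $\bar P_c\to\bar P_a$ that are each either diagonal or anti-diagonal with \emph{both} surviving entries nonzero paths, and the product is again diagonal or anti-diagonal with both entries given by concatenations which always splice (the shared index $j$ in $\sum_j(\RZ(\pZ_1))_{ij}(\RZ(\pZ_2))_{jk}$ guarantees head--tail agreement at $h(a)$ or $t(a)$), and in a consistent Jacobi algebra nonzero paths never concatenate to zero. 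So $\RZ(\pZ_1)\RZ(\pZ_2)$ is a nonzero matrix; the vanishing must come from the projection $\pi$, not from the naive product. Your parenthetical alternative — that the concatenated opposite path contains a full boundary cycle and hence a factor of $\ell$, which is null-homotopic — is the more plausible mechanism, but you have not verified it, and it is not obviously cleaner to check than the zigzag case itself.

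The paper sidesteps all of this. Instead of computing the product, it proves that the \emph{target} vanishes: if $\pZ_1$ and $\pZ_2$ meet in different directions at $a$, any zigzag path $\pZ$ from $c$ to $b$ in the universal cover would be forced (by a left/right positional argument) to have parity opposite to $|\pZ_1|+|\pZ_2|$, i.e.\ $\RZ(\pZ)$ would have the wrong $\Z_2$-degree relative to $\deg\mu_2(\RZ(\pZ_1),\RZ(\pZ_2))$; since $\H\Hom(\bar P_{\tilde b},\bar P_{\tilde c})$ in the universal cover is at most one-dimensional, it is zero in the relevant degree, and $\pi$ annihilates whatever the product is. This degree/vanishing argument is what your proposal is missing, and it spares you the combinatorics you flagged as ``the hard part.'' I'd suggest adopting it in place of the endpoint-matching claim.
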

\begin{proof}
From the formula for the $\cA_{\infty}$-structure we get
\[ 
\mu(\RZ(\pZ_1),\RZ(\pZ_2))= (1-hd+dh)(\RZ(\pZ_1)\RZ(\pZ_2)).
\]
Now $\RZ(\pZ_1)\RZ(\pZ_2)$ is a matrix that either consists
of $2$ paths (possibly with signs) on the off-diagonals
or on the diagonals.
The expression $(1-hd+dh)(\RZ(\pZ_1)\RZ(\pZ_2))$ 
will be nonzero only if these paths are opposite paths
of some other zigzag path.

If $\pZ_2a^{-1}\pZ_1$ is a zigzag path, one can easily check that
$\RZ(\pZ_1)\RZ(\pZ_2)=\RZ(\pZ_2a^{-1}\pZ_1)$ and hence
$\mu(\RZ(\pZ_1),\RZ(\pZ_2))= \RZ(\pZ_2a^{-1}\pZ_1)$.

If $\pZ_1$ and $\pZ_2$ are zigzag paths in different directions,
we will show that $\H\Hom(\bar P_b,\bar P_c)=0$ in the universal cover.
If this were not the case then there must be a zigzag path $\pZ$ from $c$ to $b$. 

If $\pZ_2$ lies to the left of $\pZ_1$ then in order for $\pZ$ to intersect with $\pZ_2$,
$\pZ$ must also lie to the left of $\pZ_1$ and therefore the length of $\pZ_1$ must be odd.
\begin{center}
\resizebox{3cm}{!}{
\begin{tikzpicture}
\draw[-latex] (0,0) -- (-1,1) -- (-2,0) -- (-3,1) -- (-4,0) --(-5,1);
\draw[-latex] (-5,1) -- (-4,2) -- (-5,3) -- (-4,4) -- (-5,5);
\draw[-latex] (-1,1) -- (-1,2.5) -- (-2.5,2.5) -- (-2.5,4) -- (-4,4);
\draw (-.5,.7) node{$c$};
\draw (-4.5,.2) node{$a$};
\draw (-4.5,4.7) node{$b$};
\end{tikzpicture}}
\end{center}
For similar reasons the length of $\pZ_2$ must be odd while the length of $\pZ$ must be even.
This is impossible because of the degree of $\mu(\RZ(\pZ_1),\RZ(\pZ_2))$ would then be even while
the degree of $\RZ(\pZ)$ is odd.
\end{proof}

\begin{theorem}\label{productsare1}
Let $a_1 \dots a_k$ be a positive or negative cycle in $\qpol$ then 
\[
 \mu(\RZ(a_1a_2),\RZ(a_2a_3),\dots,\RZ(a_la_{l+1}))=\begin{cases}
                                                            0& l\ne k\\
 \RZ(a_1) &l=k
                                                           \end{cases}
\]
\end{theorem}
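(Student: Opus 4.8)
Since a length-one piece has both opposite paths trivial, $\RZ(a_1)$ is the identity matrix of $\bar P_{a_1}=(Ah(a_1)\rightleftarrows At(a_1))$, i.e. $\RZ(a_1)=\Id_{\bar P_{a_1}}$; so for $l=k$ the claim is that the $k$-fold product of the ``boundary steps'' around the face is the identity. The plan has three parts: reduce to the universal cover, dispose of the cases $l\ne k$ by a dimension argument, and finally pin down the scalar when $l=k$. For the reduction, I would use that the codifferential $h$ respects the decomposition $\Hom(\bar P_a,\bar P_b)=\bigoplus_{\tilde b}\Hom(\bar P_{\tilde a},\bar P_{\tilde b})$ (and $\mu,\pi$ obviously do), so the whole tree construction of the $\cA_\infty$-structure on $\H\mf(\qpol)$ is compatible with the cover. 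A positive (or negative) cycle bounds an embedded disc, hence lifts to a genuine closed cycle $\tilde a_1\cdots\tilde a_k$ with $\tilde a_{k+1}=\tilde a_1$ bounding a lifted face $\tilde F$; fixing the lift $\tilde a_1$, the relevant component of each $\RZ(a_ia_{i+1})$ is $\RZ(\tilde a_i\tilde a_{i+1})$, so it suffices to compute $\mu(\RZ(\tilde a_1\tilde a_2),\dots,\RZ(\tilde a_l\tilde a_{l+1}))$ in $\H\mf(\tilde\qpol)$. (I only treat $1\le l\le k$, the range entering the applications.)

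For $l=1$ the product is $\mu_1^{\H}=0$. For $2\le l\le k-1$ the product lies in $\H\Hom(\bar P_{\tilde a_{l+1}},\bar P_{\tilde a_1})$, and by the basis lemma this space is nonzero only when $\tilde a_1$ and $\tilde a_{l+1}$ lie on a common zigzag ray. But a zigzag ray through $\tilde a_1$ meets $\partial\tilde F$ in at most the two arrows $\tilde a_1,\tilde a_2$: the zig ray of $\tilde a_1$ enters $\tilde F$ via its first (positive) step, which is exactly the corner $\tilde a_1\tilde a_2$, and then leaves into a negative face, while the zag ray of $\tilde a_1$ leaves $\tilde F$ at once through a negative step; since a zigzag ray in the universal cover of a zigzag-consistent dimer is injective, it meets each face at most once and never returns. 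As $\tilde a_{l+1}\notin\{\tilde a_1,\tilde a_2\}$ for $2\le l\le k-1$, the Hom-space vanishes and so does the product.

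For $l=k$ we have $\tilde a_{k+1}=\tilde a_1$, so the product lies in $\H\Hom(\bar P_{\tilde a_1},\bar P_{\tilde a_1})$; by the basis lemma and injectivity of zigzag rays this space is $\C\,\RZ(\tilde a_1)=\C\,\Id$, so $\mu(\RZ(\tilde a_1\tilde a_2),\dots,\RZ(\tilde a_k\tilde a_1))=\lambda\,\Id$ for some $\lambda\in\C$, and only $\lambda=1$ remains. Here I would expand $\mu_k^{\H}$ by the tree formula recalled above: since $\mf(\tilde\qpol)$ is a genuine dg-category, only binary trees contribute, with leaves the $\RZ(\tilde a_i\tilde a_{i+1})$, internal nodes $h$ composed with the honest matrix product, and the root $\pi$ composed with it. The matrix product $\RZ(\tilde a_i\tilde a_{i+1})\cdots\RZ(\tilde a_{j-1}\tilde a_j)$ of consecutive steps is, up to sign, the explicit opposite-path matrix of the boundary arc $\tilde a_i\cdots\tilde a_j$ of $\tilde F$; this matrix is $d$-exact when the arc is proper (its target $\H\Hom$ vanishes by the previous paragraph, and the vanishing of $\mu_2$ on two consecutive positive steps is precisely the preceding theorem on the ordinary product, whose ``glued'' path is then never a zigzag path), while it equals $\Id$ when the arc is all of $\partial\tilde F$. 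Feeding this into the tree sum one checks by induction on arc length that each $h\mu_2$ replaces an initial block of leaves by the codifferential-preimage of its arc matrix, that every surviving tree carries the same arc-morphism, and that after accounting for the homological-perturbation signs exactly one summand remains, with coefficient $1$. (The sign bookkeeping can be shortened by instead substituting the inputs $\RZ(\tilde a_1\tilde a_2),\dots,\RZ(\tilde a_k\tilde a_1),\RZ(\tilde a_1\tilde a_2)$ into the identity $\mathrm{[M_{k+1}]}$: all terms containing a $\mu_j$ with $2\le j\le k$ applied to consecutive steps vanish by the two paragraphs above, which forces the scalars of the cyclic variants of the product to agree up to a definite sign; one then fixes the common value by one explicit computation with $h$, using the normalisation $a\bar a\in\qpol_2$ built into $\bar P_a$.)

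The only genuinely non-formal step is the last one: extracting $\lambda=1$ in the case $l=k$, i.e. controlling the homological-perturbation signs so that the binary-tree sum collapses to $+1$, or, via $\mathrm{[M_{k+1}]}$, carrying out a single hands-on calculation with the codifferential. Everything else is a clean consequence of the basis description of $\H\Hom(\bar P_a,\bar P_b)$, the behaviour of zigzag rays in the universal cover of a consistent dimer, and strict unitality.
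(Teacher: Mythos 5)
Your proof takes a genuinely different route from the paper's for the vanishing cases, and it is worth comparing. The paper proves the whole theorem by an explicit computation: it writes out the matrices $\RZ(a_ia_{i+1})$, computes the iterated expressions $h(\cdots h(\RZ(a_1a_2)\RZ(a_2a_3))\cdots)$, shows that only two ``caterpillar'' trees contribute, uses a degree count to show they vanish unless $r=k$, and then adds the two surviving terms to get $\RZ(a_1)$. You instead replace the vanishing part by a target-space argument: after passing to the universal cover, you invoke the basis lemma and its corollary to conclude that for $2\le l\le k-1$ the space $\H\Hom(\bar P_{\tilde a_{l+1}},\bar P_{\tilde a_1})$ is zero. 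This is conceptually attractive and, if it works, avoids most of the bookkeeping.

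However, there is a genuine gap in the key step. Your vanishing claim rests on the assertion that a zigzag ray through $\tilde a_1$ meets $\partial\tilde F$ only in the initial pair $\tilde a_1,\tilde a_2$ and ``never returns.'' The justification you give --- ``since a zigzag ray in the universal cover of a zigzag-consistent dimer is injective, it meets each face at most once'' --- is a non sequitur on two counts. First, the zigzag-consistency condition as defined in the paper only says that the zig and zag rays \emph{of the same arrow} meet only at that arrow; it does not directly assert that a single ray never repeats an arrow. Second, and more importantly, even injectivity of the ray (no repeated arrow) would not preclude it from re-entering $\tilde F$ through a different pair of consecutive boundary arrows and hence hitting $\tilde a_{l+1}$. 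The stronger fact you need --- that zigzag rays are simple curves in the universal cover and visit each face at most once --- is indeed true for consistent dimers, but it is not established in this paper and requires an argument (e.g.\ via the minimality of opposite paths and Davison's uniqueness of minimal paths, along the lines of the paper's Lemma~6.4). Without that, the dimension argument is not yet a proof.

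Two further, smaller points. First, the scalar extraction in the $l=k$ case is only sketched; you describe two plausible strategies (tree-by-tree, or via $[\mathrm{M}_{k+1}]$) but carry neither through, whereas this is precisely where the paper's explicit nested-$h$ computation does the work: it finds the two contributing trees and shows their sum is $\smatrix{h(a_1)&0\\0&t(a_1)}=\RZ(a_1)$. Second, the theorem as stated covers all $l\ne k$, including $l>k$; for $l\equiv 1\pmod k$ the arrow $\tilde a_{l+1}$ coincides with $\tilde a_2$, which \emph{is} on the zig ray of $\tilde a_1$, so the Hom-space is nonzero and your dimension argument does not apply --- one needs the degree (parity) argument that the paper uses. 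You explicitly restrict to $1\le l\le k$, which is fine for the applications but leaves the stated theorem incompletely proved.
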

\begin{proof}
First note that
\[
 \RZ(a_ia_{i+1})=\begin{pmatrix}
                  0&-t(a_i)\\
a_{i+2}\dots a_{i+k-1}& 0
                 \end{pmatrix}.
\]
Therefore
\[
  h(\RZ(a_ia_{i+1})\RZ(a_{i+1}a_{i+2}))=
\begin{pmatrix}
                  0&0\\
-a_{i+3}\dots a_{i+k-1}&0
                 \end{pmatrix}
\]
and by induction
\se{
 &h(\dots h(h(\RZ(a_ia_{i+1})\RZ(a_{i+1}a_{i+2})\RZ(a_{i+2}a_{i+3})\dots)\RZ(a_{i+r}a_{i+r+1})\\&=
\begin{pmatrix}
                  0&0\\
-a_{i+r+2}\dots a_{i+k-1}&0
                 \end{pmatrix}.
}
if $r+2\le k-1$ and zero otherwise.

If we multiply two such expressions together we get zero. If we multiply one such expression on the
left with $\RZ(a_{i-1}a_{i})$ we get a matrix with only the upper left diagonal nonzero.
If we apply $h$ to this we also get zero.

The only trees that might give a nonzero contribution to $\mu$ are
\se{
\vcenter{\xymatrix@C=.5cm@R=.5cm{\vtx{}\ar[d]&\vtx{}\ar[ld]&\cdots&\vtx{}\ar[llldd]\\
\vtx{}\ar@{.>}[d]&&&\\
\vtx{}&&&
}}~~
 &\pi(h(\dots h(h(\RZ(a_1a_{2})\RZ(a_{2}a_{3})\RZ(a_{i+2}a_{i+3})\dots)\RZ(a_{r}a_{r+1}))\\
\vcenter{\xymatrix@C=.5cm@R=.5cm{\vtx{}\ar[dd]&\vtx{}\ar@{.>}[d]&\cdots&\vtx{}\ar[lld]\\
&\vtx{}\ar[ld]&&\\
\vtx{}&&&
}}~~
 &\pi(\RZ(a_1a_{2}) h(\dots (h(\RZ(a_{2}a_{3})\RZ(a_{i+2}a_{i+3})\dots)\RZ(a_{r}a_{r+1}))
}
Both expressions can only give a nonzero result if $r=k$. Indeed by degree reasons these expressions are even: 
$r\deg \RZ(a_1a_2)+(r-2)\deg h +\deg \pi=0$, but the only path that goes opposite a zigzag path
of odd length which is also a subpath of $a_{1}\dots a_{i+r}$ is the trivial path and this happens when $r=0 \mod k$.
If $r>k+1$ the expression $h(h\dots h(\dots))$ will be zero.
 
If $r=k$ then we see that
\se{
 &h(\dots h(h(\RZ(a_1a_{2})\RZ(a_{2}a_{3})\RZ(a_{i+2}a_{i+3})\dots)\RZ(a_{r}a_{r+1}))\\
+&
 \RZ(a_1a_{2}) h(\dots (h(\RZ(a_{2}a_{3})\RZ(a_{i+2}a_{i+3})\dots)\RZ(a_{r}a_{r+1}))
\\=&\begin{pmatrix}
                                                                                      h(a_1)&0\\
0&t(a_1)
                                                                                     \end{pmatrix}=\RZ(a_1).
}
Because the latter is a basis element of the homology, $\pi(\RZ(a_1))= \RZ(a_1)$.
\end{proof}

\begin{lemma}
The construction $\H\mf(\qpol)$ is compatible with covers.
\end{lemma}
\begin{proof}
Because $d,h$ and $\pi$ are equivariant for the deck transformations of any finite cover $\qpol\to \qpol'$, we have that
$\H\mf(\qpol)=\pi\mf(\qpol)$ also has a free $\Aut(\qpol\to \qpol')$-action and $\H\mf(\qpol')$ can be identified with
the $\Aut(\qpol\to \qpol')$-invariant part. The tree-construction of $\mu$ only uses $d,h$ and $\pi$ so clearly it is also equivariant.  
\end{proof}

\begin{remark}\label{gradingmf}
A \emph{perfect matching} is a subset of arrows $\cP\subset \qpol_1$ such that every positive and negative cycle contain precisely one arrow of $\cP$. This enables us to define a degree function 
$$
\deg_\cP=
\begin{cases}
0 & a \not\in \cP\\
2 & a \in \cP
\end{cases}
$$ 
that turns $\cJA(\qpol)$ in a $\Z$-graded algebra, for which every cycle 
in $\qpol_2$ has degree $2$. Note that not every dimer has a perfect matching \cite{broomhead} but on a torus zigzag consistency implies the existence of a perfect matching \cite{Bocklandtcons}.

If $\qpol$ has a perfect matching $\cP$ then $\ell$ is a homogeneous element of degree $\deg_\cP \ell = 2$. The matrix factorization $P_a$ can be turned into a graded one:
\[
\bar P_a :=  \xymatrix{Ah(a) \ar@<.5ex>[r]^{a}&At(a)(\deg_\cP a-1) \ar@<.5ex>[l]^{\bar a}}
\]
This turns $\mf(\qpol)$ into a $\Z$-graded dg-algebra which we denote by $\mf(\qpol,\cP)$. 
It is easy to check that the degree of $\RZ(ab)$ is $1-\deg_\cP(a)$, so $-1$ if $a$ is in the perfect matching and $+1$ otherwise.
\end{remark}

\section{Dimer duality as mirror symmetry}

Let $\qpol$ be any, not necessarily consistent, dimer. 
We define its mirror dimer $\mirror{\qpol}$ as follows

\begin{enumerate}
 \item The vertices of $\mirror{\qpol}$ are the zigzag cycles of $\qpol$.
 \item The arrows of $\mirror{\qpol}$ are the arrows of $\qpol$, $h(a)$ is the zigzag cycle coming from the zig ray, and $t(a)$
is the cycle coming from the zag ray.
 \item The positive faces of $\mirror{\qpol}$ are the positive faces of $\qpol$.
 \item The negative faces of $\mirror{\qpol}$ are the negative faces of $\qpol$ in reverse order. 
\end{enumerate}
We illustrate this with a couple of examples:
\begin{center}
\begin{tabular}{ccccc}
$\qpol$&$\vcenter{
\xymatrix@C=.75cm@R=.75cm{
\vtx{1}\ar[r]_{a}&\vtx{2}\ar[d]|z&\vtx{1}\ar[l]^b\\
\vtx{3}\ar[u]_c\ar[d]^d&\vtx{4}\ar[l]|w\ar[r]|y&\vtx{3}\ar[u]^c\ar[d]_d\\
\vtx{1}\ar[r]^{a}&\vtx{2}\ar[u]|x&\vtx{1}\ar[l]_b
}}$
&
$\vcenter{
\xymatrix@C=.4cm@R=.75cm{
\vtx{1}\ar[rrr]_a\ar[dr]|{u_1}&&&\vtx{1}\ar[ld]|z\\
&\vtx{3}\ar[r]|y\ar[ld]|x&\vtx{2}\ar[ull]|{u_2}\ar[dr]|{v_2}&\\
\vtx{1}\ar[rrr]^a\ar[uu]_b&&&\vtx{1}\ar[ull]|{v_1}\ar[uu]^b
}}$
&
$\vcenter{
\xymatrix@C=.75cm@R=.75cm{
\vtx{1}\ar[r]_{a}\ar[d]^{b}&\vtx{1}\ar[r]_b&\vtx{1}\ar[d]_c\\
\vtx{1}\ar[d]^a&&\vtx{1}\ar[d]_d\\	
\vtx{1}\ar[r]^{d}&\vtx{1}\ar[r]^c&\vtx{1}\ar[uull]|x
}}$
&
$\vcenter{
\xymatrix@C=.4cm@R=.75cm{
\vtx{1}\ar[dd]\ar[rrr]&&&\vtx{2}\ar[dll]\ar@{.>}[dl]\\
&\vtx{5}\ar[ul]\ar[drr]&\vtx{6}\ar@{.>}[ull]\ar@{.>}[dr]&\\
\vtx{4}\ar[ur]\ar@{.>}[urr]&&&\vtx{3}\ar[uu]\ar[lll]
}}$
\vspace{.5cm}
\\
$\mirror{\qpol}$&
$\vcenter{
\xymatrix@C=.75cm@R=.75cm{
\sqvtx{1}\ar[r]_{a}&\sqvtx{2}\ar[d]|c&\sqvtx{1}\ar[l]^y\\
\sqvtx{3}\ar[u]_z\ar[d]^d&\sqvtx{4}\ar[l]|w\ar[r]|b&\sqvtx{3}\ar[u]^z\ar[d]_d\\
\sqvtx{1}\ar[r]^{a}&\sqvtx{2}\ar[u]|x&\sqvtx{1}\ar[l]_y
}}$
&
$\vcenter{
\xymatrix@C=.4cm@R=.75cm{
\sqvtx{1}\ar[rrr]_z\ar[dr]|{u_1}&&&\sqvtx{1}\ar[ld]|a\\
&\sqvtx{3}\ar[r]|y\ar[ld]|b&\sqvtx{2}\ar[ull]|{u_2}\ar[dr]|{v_1}&\\
\sqvtx{1}\ar[rrr]^z\ar[uu]_x&&&\sqvtx{1}\ar[ull]|{v_2}\ar[uu]^x
}}$
&
$\vcenter{
\xymatrix@C=.75cm@R=.75cm{
\sqvtx{1}\ar[r]_{a}&\sqvtx{2}\ar[r]_b&\sqvtx{1}\ar[ld]|c\\
&\sqvtx{3}\ar[ld]|d&\\	
\sqvtx{1}\ar[uu]_x\ar[r]^{a}&\sqvtx{2}\ar[r]^b&\sqvtx{1}\ar[uu]^x
}}$
&
$\vcenter{
\xymatrix@C=.75cm@R=.75cm{
\sqvtx{1}\ar[r]&\sqvtx{2}\ar[r]\ar[ld]&\sqvtx{1}\ar[ld]\\
\sqvtx{3}\ar[r]\ar[u]&\sqvtx{4}\ar[r]\ar[u]\ar[ld]&\sqvtx{3}\ar[u]\ar[ld]\\
\sqvtx{1}\ar[r]\ar[u]&\sqvtx{2}\ar[r]\ar[u]&\sqvtx{1}\ar[u]
}}$
\end{tabular}
\end{center}
\begin{remark}
The dual can also be obtained by cutting the dimer along the arrows, flipping over the clockwise faces, reversing their arrows and gluing everything back again.
This construction is basically the same construction that was introduced by Feng, He, Kennaway and Vafa in 
\cite{quivering} applied to all possible dimers.
\end{remark}

\begin{lemma}
$\qpol \mapsto \mirror{\qpol}$ is an involution on the set of all dimer models. 
\end{lemma}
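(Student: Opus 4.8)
The plan is to compute $\mirror{\mirror{\qpol}}$ piece by piece and check it coincides with $\qpol$. Three of the four defining data are essentially immediate. The arrows of $\mirror{\qpol}$ are by definition the arrows of $\qpol$, hence $(\mirror{\mirror{\qpol}})_1=(\mirror{\qpol})_1=\qpol_1$. The positive faces are preserved by a single application of $\mirror{}$, so $(\mirror{\mirror{\qpol}})_2^+=(\mirror{\qpol})_2^+=\qpol_2^+$. The negative faces have their cyclic orientation reversed by each application, so two applications restore it and $(\mirror{\mirror{\qpol}})_2^-=\qpol_2^-$. (I take for granted, as the statement presupposes, that $\mirror{\qpol}$ is genuinely a dimer model; if this is not established earlier it follows from the cut--flip--reglue description in the Remark, which also makes the whole involution statement geometrically transparent — the combinatorial argument below is the rigorous incarnation of that picture.) What remains, and is the real content, is to match the vertex sets and the head/tail maps.

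The key claim is that the zigzag cycles of $\mirror{\qpol}$ are canonically the vertices of $\qpol$. Fix an arrow $a$ of $\qpol$ and follow its zig ray inside $\mirror{\qpol}$. Unwinding the definition of $\mirror{}$: moving to the successor arrow along a positive face of $\mirror{\qpol}$ is moving to the successor arrow along a positive face of $\qpol$, whereas moving along a negative face of $\mirror{\qpol}$ is --- because those faces were reversed --- moving to the \emph{predecessor} arrow along a negative face of $\qpol$. Now $a$ points into $v:=h_\qpol(a)$; its positive-face successor leaves $v$; the negative-face predecessor of that points back into $v$; the positive-face successor of that leaves $v$; and so on. So every arrow of the zig ray of $a$ in $\mirror{\qpol}$ is incident to the single vertex $v$. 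Moreover, since the corners of positive and negative faces alternate around $v$, this zig ray sweeps once around $v$ through every arrow incident to $v$ in cyclic order; in particular the resulting zigzag cycle of $\mirror{\qpol}$ depends only on $v=h_\qpol(a)$, and we call it $Z_v$. By the symmetric argument the zag ray of $a$ in $\mirror{\qpol}$ stays around $t_\qpol(a)$ and its zigzag cycle is $Z_{t_\qpol(a)}$. Hence $v\mapsto Z_v$ is a well-defined map $\qpol_0\to\{\text{zigzag cycles of }\mirror{\qpol}\}=(\mirror{\mirror{\qpol}})_0$; it is surjective because every arrow is the head of some arrow, and injective because the arrows occurring in $Z_v$ and their cyclic order recover $v$ (a harmless nondegeneracy point). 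Under this identification the definition of $\mirror{}$ applied to $\mirror{\qpol}$ gives $h_{\mirror{\mirror{\qpol}}}(a)=Z_{h_\qpol(a)}=h_\qpol(a)$ and likewise $t_{\mirror{\mirror{\qpol}}}(a)=t_\qpol(a)$.

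At this point $\mirror{\mirror{\qpol}}$ and $\qpol$ have the same vertices, the same arrows with the same head and tail maps, the same positive faces and the same negative faces, so I would finish by invoking that a dimer model is determined by its quiver together with its sets of positive and negative faces: the cyclic order of arrows at a vertex $w$ is reconstructed from the corners at $w$ --- the consecutive incident pairs inside positive and inside negative faces --- which glue into a single cycle alternating between the two types. This forces $\mirror{\mirror{\qpol}}=\qpol$. The main obstacle is the middle paragraph: making the ``positive-successor / negative-predecessor'' bookkeeping precise with a fixed orientation convention, and confirming that these moves really do circle a single vertex of $\qpol$ while exhausting all of its incident arrows; everything else is formal.
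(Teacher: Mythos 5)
Your proof is correct and takes essentially the same approach as the paper's: the core of the argument in both is to identify the zigzag cycles of $\mirror{\qpol}$ with the vertices of $\qpol$ by observing that, since positive faces of $\mirror{\qpol}$ agree with those of $\qpol$ while negative faces are reversed, following a zigzag in $\mirror{\qpol}$ amounts to circling a single vertex of $\qpol$. You carry the bookkeeping a bit further --- explicitly reassembling $\mirror{\mirror{\qpol}}$ from its vertices, arrows, head/tail maps and both face sets and arguing this data determines the embedded quiver --- whereas the paper stops once the vertex/zigzag bijection and the resulting head/tail identification are in hand, leaving the reconstruction implicit; the extra care does not change the argument in substance.
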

\begin{proof}
Let $\pZ=a_1 \dots a_k$ be a zigzag cycle in $\mirror{\qpol}$. 
If $i$ is odd then $a_ia_{i+1}$ sits in a positive cycle in $\mirror{\qpol}$ while 
$a_ia_{i+1}$ sits in a negative cycle of $\mirror{\qpol}$ when $i$ is even.

This implies that $a_ia_{i+1}$ sits in a positive cycle in $\qpol$ for $i$ odd while
$a_{i+1}a_i$ sits in a negative cycle of $\qpol$ when $i$ is even.
Hence in $\qpol$ the odd arrows of $\pZ$ must have the same head in $\qpol$ which is equal
to the tail of the even arrows. This means there is a well defined map from the zigzag paths in $\mirror{\qpol}$
and the vertices of $\qpol$. Because $a$ is an even arrow in its zig cycle and an odd in its zag cycle, under this map
the zig cycle and the zag cycle of $a$ in $\mirror{\qpol}$ correspond to the original head and tail.
Finally this map is a bijection because given any vertex $v \in \qpol$ we can make a zigzag cycle in $\mirror{\qpol}$ by
listing all arrows incident with $v$ in a clockwise direction.
\end{proof}

\begin{theorem}
If $\qpol$ is a consistent dimer with a perfect matching $\cP$ then 
 we can find a vector field $X$ on 
$|\!\!\mirror{\qpol}\!\!|$ such that 
$\H\mf(\qpol,\cP)$ is $\cA_\infty$-isomorphic to ${\cQA_X(\rect\!\!\mirror{\qpol}})$. 
\end{theorem} 
\begin{proof}
The underlying category of $\H\mf(\qpol)$ is indeed isomorphic to $\cQA(\rect\!\!\mirror{\qpol})$.
The paths in $\cQA(\rect\!\!\mirror{\qpol})$ are those that cycle around vertices of $\mirror{\qpol}$, which are precisely
the zigzag paths of $\qpol$. Two paths multiply to zero in $\cQA(\rect\!\!\mirror{\qpol})$ if they go around different
vertices in $\mirror{\qpol}$ just as the product of $\RZ(p_1)\cdot \RZ(p_2)$ is zero if they belong to different zigzag cycles.

To get an isomorphism of graded vector spaces we a vector
field on $|\!\!\mirror{\qpol}\!\!|$ that induces the grading $\deg_\cP$.
This can easily be done by gluing together the positive/negative cycles each equipped with a vector field for which the integral curves are anticlockwise/clockwise loops starting from the head/tail of the unique arrow in contained the perfect matching. This construction introduces a negative wedge at this head/tail and positive wedges  at the other vertices.
\begin{center}
\resizebox{1.5cm}{!}{\begin{tikzpicture}
\draw [-latex] (7,0)--(7,-2);
\draw [-latex] (7,-2)--(9,-2);
\draw [-latex] (9,-2)--(9,0);
\draw [-latex] (9,0)--(7,0);
\draw [latex-] (7,-0.5) arc (270:360:.5);
\draw [latex-] (7.5,-2) arc (0:90:.5);
\draw [latex-] (9,-1.5) arc (90:180:.5);
\draw [latex-] (8.5,0) arc (180:270:.5);
\draw (7.5,-1.5) node{$1$};
\draw (7.5,-0.5) node{$1$};
\draw (8.5,-0.5) node{$1$};
\draw (8.5,-1.5) node{$-1$};
\draw (8,-2.25) node{$\in \cP$};
\draw[->-,dotted] (9,-2) .. controls +(100:3.5) and +(170:3.5) .. (9,-2);
\draw[->-,dotted] (9,-2) .. controls +(120:3) and +(150:3) .. (9,-2);
\end{tikzpicture}}
\end{center}
This gives precisely the degrees we get from remark \ref{gradingmf}.

To show that the $\cA_\infty$-structure is $\cA_\infty$-isomorphic to $\genmu$ we must distinguish 2 cases.
If $\rect\!\!\mirror{\qpol}$ is well-behaved in the sense of definition \ref{well-behaved} we can use
theorem \ref{productsare1} and theorem \ref{mainthm}.

If $\mirror{\qpol}$ is not well-behaved we will show that we can find a cover $\tilde \qpol\to \qpol$
such that  $\mirror{\tilde\qpol}$ is well-behaved. 
Not being well-behaved for $\mirror{\qpol}$ implies there is a zigzag path $ba$ of length $2$ in $\qpol$ containing two nonconsecutive
arrows of a boundary face. This implies that a subpath of this positive cycle forms a cycle. If this cycle were contractible, the other zigzag path containing
$a$ would enter the interior of this contractible cycle and could only leave this interior via $b$, contradicting zigzag consistency.
So, there is a noncontractible curve contained in just 2 boundary faces.
Because fundamental groups of surfaces are residually finite, we can always take a finite cover of $\qpol$ such that the lifts of all loops contained in just 2 boundary cycles are not loops any more. This implies that mirror of the cover is well-behaved.

The deck transformation group of the finite cover $\tilde\qpol\to\qpol$ acts as a group of
automorphisms of $\cQA(\mirror{\tilde\qpol})$ with its $\cA_\infty$-structure $\genmu$.
Theorem \ref{mainthm} [4] shows that there is an equivariant $\cA_\infty$-isomorphism between
$\cQA(\mirror{\tilde\qpol})$ and $\mf(\tilde\qpol)$. If we look at the invariant
subalgebras we get an $\cA_\infty$-isomorphism between $\cQA(\mirror{\qpol})$ and $\mf(\qpol)$.
\end{proof}
\begin{corollary}
For any consistent dimer $\qpol$ that admits a perfect matching we have that $\H\Tw\mf(\qpol)$ and $\H\Tw\Fuk(|\!\!\mirror{\qpol}\!\!|\setminus \mirror{\qpol}_0)$ are $\cA_\infty$-isomorphic
$\cA_\infty$-categories.
\end{corollary}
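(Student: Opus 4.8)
The plan is to deduce the corollary formally, with no new geometric or algebraic input, by concatenating the two identifications already established: the preceding theorem, which computes $\H\mf(\qpol)$, and the theorem identifying the wrapped Fukaya category of a punctured surface with the twisted completion of a gentle $\cA_\infty$-category. The key point is that both of these target the \emph{same} $\cA_\infty$-category, namely $\cQA(\rect\mirror{\qpol})$ equipped with the generic structure $\genmu$.

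First I would record the preliminaries needed to even apply the Fukaya theorem to $\mirror{\qpol}$. Since $\qpol$ is a dimer, by construction $\mirror{\qpol}$ is again a dimer model; hence it is a quiver that splits the compact orientable surface $S:=|\mirror{\qpol}|$, and its vertex set $\mirror{\qpol}_0$ is the set of zigzag cycles of $\qpol$. Consistency of $\qpol$ guarantees this vertex set is nonempty, so $S\setminus\mirror{\qpol}_0$ is a genuine punctured surface; I would then put on it the Liouville structure adapted to $\mirror{\qpol}$ constructed in the section on the wrapped Fukaya category, so that each arrow of $\mirror{\qpol}$ becomes an exact Lagrangian.

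Next I would apply that theorem with $Q=\mirror{\qpol}$: it yields an $\cA_\infty$-isomorphism between the full subcategory $\fuk(\mirror{\qpol})\subset\Fuk(S\setminus\mirror{\qpol}_0)$ on the arrows of $\mirror{\qpol}$ and $\cQA(\rect\mirror{\qpol}),\genmu$, and more strongly an equivalence $\Fuk(S\setminus\mirror{\qpol}_0)\simeq \H\Tw\cQA(\rect\mirror{\qpol}),\genmu$. The preceding theorem supplies an $\cA_\infty$-isomorphism $\H\mf(\qpol)\cong_\infty\cQA(\rect\mirror{\qpol}),\genmu$. I would then compose these (both are honest $\cA_\infty$-isomorphisms, the categories involved being minimal), obtaining $\H\mf(\qpol)\cong_\infty\fuk(\mirror{\qpol})$; and, to reach the full wrapped Fukaya category as written in the statement, apply $\H\Tw(-)$ to this isomorphism — $\Tw$ carries $\cA_\infty$-isomorphisms to $\cA_\infty$-isomorphisms and is idempotent — to get $\H\Tw\mf(\qpol)\cong_\infty\Fuk(S\setminus\mirror{\qpol}_0)$.

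I do not expect a genuine obstacle here: the content is entirely in the two cited theorems, and what remains is bookkeeping. The one thing that really must be checked is that the $\cA_\infty$-structure produced by the matrix-factorization computation is exactly the generic structure $\genmu$ — this is what allows the two chains to be spliced — and that is precisely what the preceding theorem asserts, via the cycle-product computation of theorem~\ref{productsare1} together with the rigidity statement in theorem~\ref{mainthm}. The only other subtlety is matching conventions for the (twisted, resp. idempotent) completions so that the two sides of the stated isomorphism are compared at the same level; this is handled by the passage to $\H\Tw(-)$ above, which I expect to be the single point requiring any care.
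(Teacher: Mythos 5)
Your proposal is correct and takes the same route as the paper: compose the $\cA_\infty$-isomorphism $\H\mf(\qpol)\cong_\infty \cQA(\rect\!\!\mirror{\qpol}),\genmu$ from the preceding theorem with the Fukaya-side identification $\fuk(\mirror{\qpol})\cong_\infty \cQA(\rect\!\!\mirror{\qpol}),\genmu$. You also correctly flag that the $\Fuk$ in the statement should be read as the subcategory $\fuk(\mirror{\qpol})$ on the arrow Lagrangians for this to be a literal $\cA_\infty$-\emph{isomorphism} (bijection on objects), as in the abstract; your alternative of passing to $\H\Tw$ is sound but changes the left-hand side to $\H\Tw\mf(\qpol)$.
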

Note that the above result applies to all zigzag consistent dimers with a perfect matching, not only those on the torus. This means that the Jacobi algebra of these dimers is not necessarily Noetherian. So in some cases this result does not derive from a commutative
instance of mirror symmetry in the background.

\section{Recovering the commutative version: an example}

In this section we relate our work back to the work by Abouzaid et al. in \cite{Auroux}. In particular we will look at the $4$-punctured sphere in full detail and
indicate the differences between the two approaches if one increases the number of punctures. 

According to \cite{Auroux} the mirror of the $4$-punctured sphere is a hypersurface in a crepant resolution of the conifold,
which is by definition the spectrum of the ring 
$$R = \C[x_{11},x_{12},x_{21},x_{22}]/(x_{11}x_{22}-x_{12}x_{21}).$$
This ring is equal to the degree zero part of the graded ring
\[
S = \C[a_1,a_2,b_1,b_2] \text{ with }\deg a_i=1\text{ and }\deg b_i=-1.
\]
The conifold $X=\Spec R$ has a crepant resolution $\tilde X = \Proj S_{\ge 0}$ and the graded $S_{\ge 0}$-module
$S_{\ge 0}\oplus S_{\ge 0}(1)$ corresponds to a tilting bundle $\ccT=\ccO(0)\oplus \ccO(1)$ on $\tilde X$. 
This means that the category of finitely generated left modules of 
\[
\End(\ccT)= \End_{\mathtt{Grmod}S_{\ge 0}}(S_{\ge 0}\oplus S_{\ge 0}(1))
\]
is derived equivalent to the category of coherent sheaves on $\tilde X$.

The algebra $\End(\ccT)$ can be seen as the path algebra of a quiver with relations: it has two vertices corresponding to the two summands of $\ccT$
and two arrows in both directions corresponding to multiplication with the $a_i$ and the $b_i$. 
The relations between these arrows come from
the commuting relations between the $a_i$ and $b_i$ in $S$. It is well-known that $\End(\ccT)$ is the Jacobi algebra of the following dimer on the torus.
\begin{center}
\begin{tikzpicture} 
\begin{scope}\clip (0pt,0pt) rectangle (50pt,50pt);
\draw[loosely dotted] (0pt,0pt) rectangle (50pt,50pt);
\draw [-latex,shorten >=5pt] (50pt,0pt) to node [fill=white,minimum size=10pt,inner sep=1pt]{{$a_1$}} (25pt,25pt); 
\draw [-latex,shorten >=5pt] (0pt,50pt) to node [fill=white,minimum size=10pt,inner sep=1pt]{{$a_2$}} (25pt,25pt); 
\draw [-latex,shorten >=5pt] (25pt,25pt) to node [fill=white,minimum size=10pt,inner sep=1pt]{{$b_1$}} (0pt,0pt); 
\draw [-latex,shorten >=5pt] (25pt,25pt) to node [fill=white,minimum size=10pt,inner sep=1pt]{{$b_2$}} (50pt,50pt); 
\end{scope}
\node at (0pt,0pt) [circle,draw,fill=white,minimum size=10pt,inner sep=1pt] {\mbox{\tiny $1$}}; 
\node at (50pt,0pt) [circle,draw,fill=white,minimum size=10pt,inner sep=1pt] {\mbox{\tiny $1$}}; 
\node at (0pt,50pt) [circle,draw,fill=white,minimum size=10pt,inner sep=1pt] {\mbox{\tiny $1$}}; 
\node at (50pt,50pt) [circle,draw,fill=white,minimum size=10pt,inner sep=1pt] {\mbox{\tiny $1$}}; 
\node at (25pt,25pt) [circle,draw,fill=white,minimum size=10pt,inner sep=1pt] {\mbox{\tiny $2$}}; 
\end{tikzpicture}
\end{center}
The center of $\cJA(\qpol)$ is isomorphic to $R$ and the special central element $\ell$ corresponds to $a_1b_1a_2b_2$. There are $4$ perfect matchings
each consisting of one arrow.

In the language of toric varieties $X=\Spec R$ comes from the cone
\[
 \sigma =\<(0,0,1),(1,0,1),(1,1,1),(0,1,1)\>
\]
and if  we refine $\sigma$ to the fan generated by 
\[
\<(0,0,1),(1,0,1),(1,1,1)\>\text{ and }\<(0,0,1),(1,1,1),(0,1,1)\>
\]
we get the crepant resolution $\tilde X \to X$. We can draw the intersection of the cones with the plane $Z=1$ for
a pictorial representation of the resolution.
\begin{center}
 \begin{tikzpicture}
\draw (0,0) -- (1,0) -- (1,1) -- (0,1) -- (0,0) -- (1,1);
\node at (.5,-.5) {$\tilde X$};
\node at (2,0.5) {$\to$};
\node at (0,0) {$\bullet$};
\node at (0,1) {$\bullet$};
\node at (1,0) {$\bullet$};
\node at (1,1) {$\bullet$};
\begin{scope}[xshift=3cm]
\draw (0,0) -- (1,0) -- (1,1) -- (0,1) -- (0,0);
\node at (.5,-.5) {$X$};
\node at (0,0) {$\bullet$};
\node at (0,1) {$\bullet$};
\node at (1,0) {$\bullet$};
\node at (1,1) {$\bullet$};
\end{scope}
\end{tikzpicture}
\end{center}
Each of the $4$ lattice points in the square corresponds to a divisor $\tilde X$ given by the graded ideals $a_i S_{\ge 0}$ and
$b_i S\cap S_{\ge 0}$. The former are affine planes and correspond to the upper left and lower right corners of the square, while
the latter are affine planes blown up in a point and correspond to the corners that meet the diagonals. The union of these divisors is
the zero locus of $\ell$ and we denote it by $\tilde X_0$.

In \cite{IU} Ishii and Ueda show that the equivalence
\[
 \ccT \stackrel{L}{\otimes}_A - : \cD^b \mathtt{mod} A \to \cD^b \mathtt{Coh} \tilde X
\]
also induces an equivalence between $\H^0\MF(A,\ell)$ and $\H^0\MF(\tilde X,\ell)$.
The images of the matrix factorizations $\bar P_{a_i}$, $\bar P_{b_j}$ become 
\se{
 \xymatrix{\ccO(0)\ar@<.5ex>[rr]^{a_i}&&\ccO(1)\ar@<.5ex>[ll]^{\ell/a_i}}\text{ and }
 \xymatrix{\ccO(1)\ar@<.5ex>[rr]^{b_i}&&\ccO(0)\ar@<.5ex>[ll]^{\ell/b_i}}.
}
In \cite{orlov1} Orlov shows that $\H^0\MF(\tilde X,\ell)$ is equivalent to $\cD^b\mathtt{coh} \tilde X_0/\mathtt{Perf} \tilde X_0$.
Under this equivalence a matrix factorization $P$ maps to $\Cok p_0$. In our example
each of the matrix factorizations above corresponds to the structure sheaf of one of the divisors.

These are precisely the generators Abouzaid et al. use to generate $\H^0\MF(\tilde X,Z)$. The dual dimer $\mirror \qpol$ in this case consists of 2 squares glued together 
to form a 4-punctured sphere.  This is also the generator that Abouzaid et al. use on the symplectic side. 
So in this particular case we get exactly the same result and $\mf(\qpol)$ 
and $\fuk(\mirror{\qpol})$ are $\cA_\infty$-isormorphic to the category $A$ in \cite{Auroux}. 

If we move beyond the $4$-punctured sphere things become more complicated. If $n=2k+2$ Abouzaid et al. use the toric crepant resolution given by the diagram
illustrated below for $k=5$.
\begin{center}
 \begin{tikzpicture}
\draw (0,0) -- (1,0) -- (1,1) -- (0,1) -- (0,0) -- (1,1);
\node at (0,0) {$\bullet$};
\node at (0,1) {$\bullet$};
\node at (1,0) {$\bullet$};
\node at (1,1) {$\bullet$};
\begin{scope}[xshift=1cm]
\draw (0,0) -- (1,0) -- (1,1) -- (0,1) -- (0,0) -- (1,1);
\node at (0,0) {$\bullet$};
\node at (0,1) {$\bullet$};
\node at (1,0) {$\bullet$};
\node at (1,1) {$\bullet$};
\end{scope}
\begin{scope}[xshift=2cm]
\draw (0,0) -- (1,0) -- (1,1) -- (0,1) -- (0,0) -- (1,1);
\node at (.5,-.5) {$\tilde X$};
\node at (0,0) {$\bullet$};
\node at (0,1) {$\bullet$};
\node at (1,0) {$\bullet$};
\node at (1,1) {$\bullet$};
\end{scope}
\begin{scope}[xshift=3cm]
\draw (0,0) -- (1,0) -- (1,1) -- (0,1) -- (0,0) -- (1,1);
\node at (0,0) {$\bullet$};
\node at (0,1) {$\bullet$};
\node at (1,0) {$\bullet$};
\node at (1,1) {$\bullet$};
\end{scope}
\begin{scope}[xshift=4cm]
\draw (0,0) -- (1,0) -- (1,1) -- (0,1) -- (0,0) -- (1,1);
\node at (2,.5) {$\to$};
\node at (0,0) {$\bullet$};
\node at (0,1) {$\bullet$};
\node at (1,0) {$\bullet$};
\node at (1,1) {$\bullet$};
\end{scope}
\begin{scope}[xshift=7cm]
\draw (0,0) -- (5,0) -- (5,1) -- (0,1) -- (0,0);
\node at (2.5,-.5) {$X$};
\node at (0,0) {$\bullet$};
\node at (0,1) {$\bullet$};
\node at (1,0) {$\bullet$};
\node at (1,1) {$\bullet$};
\node at (2,0) {$\bullet$};
\node at (2,1) {$\bullet$};
\node at (3,0) {$\bullet$};
\node at (3,1) {$\bullet$};
\node at (4,0) {$\bullet$};
\node at (4,1) {$\bullet$};
\node at (5,0) {$\bullet$};
\node at (5,1) {$\bullet$};
\end{scope}
\end{tikzpicture}
\end{center}
Again we can find a tilting bundle and its dimer will look like a $k$-fold cover of the previous dimer.
\begin{center}
\begin{tikzpicture} 
\begin{scope}\clip (0pt,0pt) rectangle (250pt,50pt);
\draw[loosely dotted] (0pt,0pt) rectangle (250pt,50pt);
\draw [-latex,shorten >=5pt] (0pt,0pt) to (25pt,-25pt); 
\draw [-latex,shorten >=5pt] (50pt,0pt) to (75pt,-25pt); 
\draw [-latex,shorten >=5pt] (100pt,0pt) to (125pt,-25pt); 
\draw [-latex,shorten >=5pt] (150pt,0pt) to (175pt,-25pt); 
\draw [-latex,shorten >=5pt] (200pt,0pt) to (225pt,-25pt); 
\draw [-latex,shorten >=5pt] (250pt,0pt) to (275pt,-25pt); 
\draw [-latex,shorten >=5pt] (0pt,0pt) to (-25pt,25pt); 
\draw [-latex,shorten >=5pt] (0pt,50pt) to (25pt,25pt); 
\draw [-latex,shorten >=5pt] (250pt,0pt) to (225pt,25pt); 
\draw [-latex,shorten >=5pt] (25pt,25pt) to (0pt,0pt); 
\draw [-latex,shorten >=5pt] (25pt,-25pt) to (50pt,0pt); 
\draw [-latex,shorten >=5pt] (50pt,50pt) to (75pt,25pt); 
\draw [-latex,shorten >=5pt] (50pt,0pt) to (25pt,25pt); 
\draw [-latex,shorten >=5pt] (75pt,25pt) to (50pt,0pt); 
\draw [-latex,shorten >=5pt] (75pt,-25pt) to (100pt,0pt); 
\draw [-latex,shorten >=5pt] (100pt,50pt) to (125pt,25pt); 
\draw [-latex,shorten >=5pt] (100pt,0pt) to (75pt,25pt); 
\draw [-latex,shorten >=5pt] (125pt,25pt) to (100pt,0pt); 
\draw [-latex,shorten >=5pt] (125pt,-25pt) to (150pt,0pt); 
\draw [-latex,shorten >=5pt] (150pt,50pt) to (175pt,25pt); 
\draw [-latex,shorten >=5pt] (150pt,0pt) to (125pt,25pt); 
\draw [-latex,shorten >=5pt] (175pt,25pt) to (150pt,0pt); 
\draw [-latex,shorten >=5pt] (175pt,-25pt) to (200pt,0pt); 
\draw [-latex,shorten >=5pt] (200pt,50pt) to (225pt,25pt); 
\draw [-latex,shorten >=5pt] (200pt,0pt) to (175pt,25pt); 
\draw [-latex,shorten >=5pt] (225pt,25pt) to (200pt,0pt); 
\draw [-latex,shorten >=5pt] (-25pt,-25pt) to (0pt,0pt); 
\draw [-latex,shorten >=5pt] (250pt,50pt) to (275pt,25pt); 
\draw [-latex,shorten >=5pt] (275pt,25pt) to (250pt,0pt); 
\draw [-latex,shorten >=5pt] (225pt,-25pt) to (250pt,0pt); 
\draw [-latex,shorten >=5pt] (0pt,50pt) to (-25pt,75pt); 
\draw [-latex,shorten >=5pt] (250pt,50pt) to (225pt,75pt); 
\draw [-latex,shorten >=5pt] (25pt,75pt) to (0pt,50pt); 
\draw [-latex,shorten >=5pt] (25pt,25pt) to (50pt,50pt); 
\draw [-latex,shorten >=5pt] (50pt,50pt) to (25pt,75pt); 
\draw [-latex,shorten >=5pt] (75pt,75pt) to (50pt,50pt); 
\draw [-latex,shorten >=5pt] (75pt,25pt) to (100pt,50pt); 
\draw [-latex,shorten >=5pt] (100pt,50pt) to (75pt,75pt); 
\draw [-latex,shorten >=5pt] (125pt,75pt) to (100pt,50pt); 
\draw [-latex,shorten >=5pt] (125pt,25pt) to (150pt,50pt); 
\draw [-latex,shorten >=5pt] (150pt,50pt) to (125pt,75pt); 
\draw [-latex,shorten >=5pt] (175pt,75pt) to (150pt,50pt); 
\draw [-latex,shorten >=5pt] (175pt,25pt) to (200pt,50pt); 
\draw [-latex,shorten >=5pt] (200pt,50pt) to (175pt,75pt); 
\draw [-latex,shorten >=5pt] (225pt,75pt) to (200pt,50pt); 
\draw [-latex,shorten >=5pt] (-25pt,25pt) to (0pt,50pt); 
\draw [-latex,shorten >=5pt] (275pt,75pt) to (250pt,50pt); 
\draw [-latex,shorten >=5pt] (225pt,25pt) to (250pt,50pt); 
\draw [-latex,shorten >=5pt] (0pt,0pt) to (25pt,-25pt); 
\draw [-latex,shorten >=5pt] (50pt,0pt) to (75pt,-25pt); 
\draw [-latex,shorten >=5pt] (100pt,0pt) to (125pt,-25pt); 
\draw [-latex,shorten >=5pt] (150pt,0pt) to (175pt,-25pt); 
\draw [-latex,shorten >=5pt] (200pt,0pt) to (225pt,-25pt); 
\draw [-latex,shorten >=5pt] (250pt,0pt) to (275pt,-25pt); 
\draw [-latex,shorten >=5pt] (0pt,0pt) to (-25pt,25pt); 
\draw [-latex,shorten >=5pt] (0pt,50pt) to (25pt,25pt); 
\draw [-latex,shorten >=5pt] (250pt,0pt) to (225pt,25pt); 
\draw [-latex,shorten >=5pt] (25pt,25pt) to (0pt,0pt); 
\draw [-latex,shorten >=5pt] (25pt,-25pt) to (50pt,0pt); 
\draw [-latex,shorten >=5pt] (50pt,50pt) to (75pt,25pt); 
\draw [-latex,shorten >=5pt] (50pt,0pt) to (25pt,25pt); 
\draw [-latex,shorten >=5pt] (75pt,25pt) to (50pt,0pt); 
\draw [-latex,shorten >=5pt] (75pt,-25pt) to (100pt,0pt); 
\draw [-latex,shorten >=5pt] (100pt,50pt) to (125pt,25pt); 
\draw [-latex,shorten >=5pt] (100pt,0pt) to (75pt,25pt); 
\draw [-latex,shorten >=5pt] (125pt,25pt) to (100pt,0pt); 
\draw [-latex,shorten >=5pt] (125pt,-25pt) to (150pt,0pt); 
\draw [-latex,shorten >=5pt] (150pt,50pt) to (175pt,25pt); 
\draw [-latex,shorten >=5pt] (150pt,0pt) to (125pt,25pt); 
\draw [-latex,shorten >=5pt] (175pt,25pt) to (150pt,0pt); 
\draw [-latex,shorten >=5pt] (175pt,-25pt) to (200pt,0pt); 
\draw [-latex,shorten >=5pt] (200pt,50pt) to (225pt,25pt); 
\draw [-latex,shorten >=5pt] (200pt,0pt) to (175pt,25pt); 
\draw [-latex,shorten >=5pt] (225pt,25pt) to (200pt,0pt); 
\draw [-latex,shorten >=5pt] (-25pt,-25pt) to (0pt,0pt); 
\draw [-latex,shorten >=5pt] (250pt,50pt) to (275pt,25pt); 
\draw [-latex,shorten >=5pt] (275pt,25pt) to (250pt,0pt); 
\draw [-latex,shorten >=5pt] (225pt,-25pt) to (250pt,0pt); 
\draw [-latex,shorten >=5pt] (0pt,50pt) to (-25pt,75pt); 
\draw [-latex,shorten >=5pt] (250pt,50pt) to (225pt,75pt); 
\draw [-latex,shorten >=5pt] (25pt,75pt) to (0pt,50pt); 
\draw [-latex,shorten >=5pt] (25pt,25pt) to (50pt,50pt); 
\draw [-latex,shorten >=5pt] (50pt,50pt) to (25pt,75pt); 
\draw [-latex,shorten >=5pt] (75pt,75pt) to (50pt,50pt); 
\draw [-latex,shorten >=5pt] (75pt,25pt) to (100pt,50pt); 
\draw [-latex,shorten >=5pt] (100pt,50pt) to (75pt,75pt); 
\draw [-latex,shorten >=5pt] (125pt,75pt) to (100pt,50pt); 
\draw [-latex,shorten >=5pt] (125pt,25pt) to (150pt,50pt); 
\draw [-latex,shorten >=5pt] (150pt,50pt) to (125pt,75pt); 
\draw [-latex,shorten >=5pt] (175pt,75pt) to (150pt,50pt); 
\draw [-latex,shorten >=5pt] (175pt,25pt) to (200pt,50pt); 
\draw [-latex,shorten >=5pt] (200pt,50pt) to (175pt,75pt); 
\draw [-latex,shorten >=5pt] (225pt,75pt) to (200pt,50pt); 
\draw [-latex,shorten >=5pt] (-25pt,25pt) to (0pt,50pt); 
\draw [-latex,shorten >=5pt] (275pt,75pt) to (250pt,50pt); 
\draw [-latex,shorten >=5pt] (225pt,25pt) to (250pt,50pt); 
\draw [-latex,shorten >=5pt] (0pt,0pt) to (25pt,-25pt); 
\draw [-latex,shorten >=5pt] (50pt,0pt) to (75pt,-25pt); 
\draw [-latex,shorten >=5pt] (100pt,0pt) to (125pt,-25pt); 
\draw [-latex,shorten >=5pt] (150pt,0pt) to (175pt,-25pt); 
\draw [-latex,shorten >=5pt] (200pt,0pt) to (225pt,-25pt); 
\draw [-latex,shorten >=5pt] (250pt,0pt) to (275pt,-25pt); 
\draw [-latex,shorten >=5pt] (0pt,0pt) to (-25pt,25pt); 
\draw [-latex,shorten >=5pt] (0pt,50pt) to (25pt,25pt); 
\draw [-latex,shorten >=5pt] (250pt,0pt) to (225pt,25pt); 
\draw [-latex,shorten >=5pt] (25pt,25pt) to (0pt,0pt); 
\draw [-latex,shorten >=5pt] (25pt,-25pt) to (50pt,0pt); 
\draw [-latex,shorten >=5pt] (50pt,50pt) to (75pt,25pt); 
\draw [-latex,shorten >=5pt] (50pt,0pt) to (25pt,25pt); 
\draw [-latex,shorten >=5pt] (75pt,25pt) to (50pt,0pt); 
\draw [-latex,shorten >=5pt] (75pt,-25pt) to (100pt,0pt); 
\draw [-latex,shorten >=5pt] (100pt,50pt) to (125pt,25pt); 
\draw [-latex,shorten >=5pt] (100pt,0pt) to (75pt,25pt); 
\draw [-latex,shorten >=5pt] (125pt,25pt) to (100pt,0pt); 
\draw [-latex,shorten >=5pt] (125pt,-25pt) to (150pt,0pt); 
\draw [-latex,shorten >=5pt] (150pt,50pt) to (175pt,25pt); 
\draw [-latex,shorten >=5pt] (150pt,0pt) to (125pt,25pt); 
\draw [-latex,shorten >=5pt] (175pt,25pt) to (150pt,0pt); 
\draw [-latex,shorten >=5pt] (175pt,-25pt) to (200pt,0pt); 
\draw [-latex,shorten >=5pt] (200pt,50pt) to (225pt,25pt); 
\draw [-latex,shorten >=5pt] (200pt,0pt) to (175pt,25pt); 
\draw [-latex,shorten >=5pt] (225pt,25pt) to (200pt,0pt); 
\draw [-latex,shorten >=5pt] (-25pt,-25pt) to (0pt,0pt); 
\draw [-latex,shorten >=5pt] (250pt,50pt) to (275pt,25pt); 
\draw [-latex,shorten >=5pt] (275pt,25pt) to (250pt,0pt); 
\draw [-latex,shorten >=5pt] (225pt,-25pt) to (250pt,0pt); 
\draw [-latex,shorten >=5pt] (0pt,50pt) to (-25pt,75pt); 
\draw [-latex,shorten >=5pt] (250pt,50pt) to (225pt,75pt); 
\draw [-latex,shorten >=5pt] (25pt,75pt) to (0pt,50pt); 
\draw [-latex,shorten >=5pt] (25pt,25pt) to (50pt,50pt); 
\draw [-latex,shorten >=5pt] (50pt,50pt) to (25pt,75pt); 
\draw [-latex,shorten >=5pt] (75pt,75pt) to (50pt,50pt); 
\draw [-latex,shorten >=5pt] (75pt,25pt) to (100pt,50pt); 
\draw [-latex,shorten >=5pt] (100pt,50pt) to (75pt,75pt); 
\draw [-latex,shorten >=5pt] (125pt,75pt) to (100pt,50pt); 
\draw [-latex,shorten >=5pt] (125pt,25pt) to (150pt,50pt); 
\draw [-latex,shorten >=5pt] (150pt,50pt) to (125pt,75pt); 
\draw [-latex,shorten >=5pt] (175pt,75pt) to (150pt,50pt); 
\draw [-latex,shorten >=5pt] (175pt,25pt) to (200pt,50pt); 
\draw [-latex,shorten >=5pt] (200pt,50pt) to (175pt,75pt); 
\draw [-latex,shorten >=5pt] (225pt,75pt) to (200pt,50pt); 
\draw [-latex,shorten >=5pt] (-25pt,25pt) to (0pt,50pt); 
\draw [-latex,shorten >=5pt] (275pt,75pt) to (250pt,50pt); 
\draw [-latex,shorten >=5pt] (225pt,25pt) to (250pt,50pt); 
\end{scope}\begin{scope}\clip (-20pt,-20pt) rectangle (270pt,70pt);
\node at (0pt,0pt) [circle,draw,fill=white,minimum size=10pt,inner sep=1pt] {\mbox{\tiny $1$}}; 
\node at (250pt,0pt) [circle,draw,fill=white,minimum size=10pt,inner sep=1pt] {\mbox{\tiny $1$}}; 
\node at (0pt,50pt) [circle,draw,fill=white,minimum size=10pt,inner sep=1pt] {\mbox{\tiny $1$}}; 
\node at (250pt,50pt) [circle,draw,fill=white,minimum size=10pt,inner sep=1pt] {\mbox{\tiny $1$}}; 
\node at (0pt,0pt) [circle,draw,fill=white,minimum size=10pt,inner sep=1pt] {\mbox{\tiny $1$}}; 
\node at (250pt,0pt) [circle,draw,fill=white,minimum size=10pt,inner sep=1pt] {\mbox{\tiny $1$}}; 
\node at (0pt,50pt) [circle,draw,fill=white,minimum size=10pt,inner sep=1pt] {\mbox{\tiny $1$}}; 
\node at (250pt,50pt) [circle,draw,fill=white,minimum size=10pt,inner sep=1pt] {\mbox{\tiny $1$}}; 
\node at (0pt,0pt) [circle,draw,fill=white,minimum size=10pt,inner sep=1pt] {\mbox{\tiny $1$}}; 
\node at (250pt,0pt) [circle,draw,fill=white,minimum size=10pt,inner sep=1pt] {\mbox{\tiny $1$}}; 
\node at (0pt,50pt) [circle,draw,fill=white,minimum size=10pt,inner sep=1pt] {\mbox{\tiny $1$}}; 
\node at (250pt,50pt) [circle,draw,fill=white,minimum size=10pt,inner sep=1pt] {\mbox{\tiny $1$}}; 
\node at (25pt,25pt) [circle,draw,fill=white,minimum size=10pt,inner sep=1pt] {\mbox{\tiny $2$}}; 
\node at (25pt,25pt) [circle,draw,fill=white,minimum size=10pt,inner sep=1pt] {\mbox{\tiny $2$}}; 
\node at (25pt,25pt) [circle,draw,fill=white,minimum size=10pt,inner sep=1pt] {\mbox{\tiny $2$}}; 
\node at (50pt,0pt) [circle,draw,fill=white,minimum size=10pt,inner sep=1pt] {\mbox{\tiny $3$}}; 
\node at (50pt,50pt) [circle,draw,fill=white,minimum size=10pt,inner sep=1pt] {\mbox{\tiny $3$}}; 
\node at (50pt,0pt) [circle,draw,fill=white,minimum size=10pt,inner sep=1pt] {\mbox{\tiny $3$}}; 
\node at (50pt,50pt) [circle,draw,fill=white,minimum size=10pt,inner sep=1pt] {\mbox{\tiny $3$}}; 
\node at (50pt,0pt) [circle,draw,fill=white,minimum size=10pt,inner sep=1pt] {\mbox{\tiny $3$}}; 
\node at (50pt,50pt) [circle,draw,fill=white,minimum size=10pt,inner sep=1pt] {\mbox{\tiny $3$}}; 
\node at (75pt,25pt) [circle,draw,fill=white,minimum size=10pt,inner sep=1pt] {\mbox{\tiny $4$}}; 
\node at (75pt,25pt) [circle,draw,fill=white,minimum size=10pt,inner sep=1pt] {\mbox{\tiny $4$}}; 
\node at (75pt,25pt) [circle,draw,fill=white,minimum size=10pt,inner sep=1pt] {\mbox{\tiny $4$}}; 
\node at (100pt,0pt) [circle,draw,fill=white,minimum size=10pt,inner sep=1pt] {\mbox{\tiny $5$}}; 
\node at (100pt,50pt) [circle,draw,fill=white,minimum size=10pt,inner sep=1pt] {\mbox{\tiny $5$}}; 
\node at (100pt,0pt) [circle,draw,fill=white,minimum size=10pt,inner sep=1pt] {\mbox{\tiny $5$}}; 
\node at (100pt,50pt) [circle,draw,fill=white,minimum size=10pt,inner sep=1pt] {\mbox{\tiny $5$}}; 
\node at (100pt,0pt) [circle,draw,fill=white,minimum size=10pt,inner sep=1pt] {\mbox{\tiny $5$}}; 
\node at (100pt,50pt) [circle,draw,fill=white,minimum size=10pt,inner sep=1pt] {\mbox{\tiny $5$}}; 
\node at (125pt,25pt) [circle,draw,fill=white,minimum size=10pt,inner sep=1pt] {\mbox{\tiny $6$}}; 
\node at (125pt,25pt) [circle,draw,fill=white,minimum size=10pt,inner sep=1pt] {\mbox{\tiny $6$}}; 
\node at (125pt,25pt) [circle,draw,fill=white,minimum size=10pt,inner sep=1pt] {\mbox{\tiny $6$}}; 
\node at (150pt,0pt) [circle,draw,fill=white,minimum size=10pt,inner sep=1pt] {\mbox{\tiny $7$}}; 
\node at (150pt,50pt) [circle,draw,fill=white,minimum size=10pt,inner sep=1pt] {\mbox{\tiny $7$}}; 
\node at (150pt,0pt) [circle,draw,fill=white,minimum size=10pt,inner sep=1pt] {\mbox{\tiny $7$}}; 
\node at (150pt,50pt) [circle,draw,fill=white,minimum size=10pt,inner sep=1pt] {\mbox{\tiny $7$}}; 
\node at (150pt,0pt) [circle,draw,fill=white,minimum size=10pt,inner sep=1pt] {\mbox{\tiny $7$}}; 
\node at (150pt,50pt) [circle,draw,fill=white,minimum size=10pt,inner sep=1pt] {\mbox{\tiny $7$}}; 
\node at (175pt,25pt) [circle,draw,fill=white,minimum size=10pt,inner sep=1pt] {\mbox{\tiny $8$}}; 
\node at (175pt,25pt) [circle,draw,fill=white,minimum size=10pt,inner sep=1pt] {\mbox{\tiny $8$}}; 
\node at (175pt,25pt) [circle,draw,fill=white,minimum size=10pt,inner sep=1pt] {\mbox{\tiny $8$}}; 
\node at (200pt,0pt) [circle,draw,fill=white,minimum size=10pt,inner sep=1pt] {\mbox{\tiny $9$}}; 
\node at (200pt,50pt) [circle,draw,fill=white,minimum size=10pt,inner sep=1pt] {\mbox{\tiny $9$}}; 
\node at (200pt,0pt) [circle,draw,fill=white,minimum size=10pt,inner sep=1pt] {\mbox{\tiny $9$}}; 
\node at (200pt,50pt) [circle,draw,fill=white,minimum size=10pt,inner sep=1pt] {\mbox{\tiny $9$}}; 
\node at (200pt,0pt) [circle,draw,fill=white,minimum size=10pt,inner sep=1pt] {\mbox{\tiny $9$}}; 
\node at (200pt,50pt) [circle,draw,fill=white,minimum size=10pt,inner sep=1pt] {\mbox{\tiny $9$}}; 
\node at (225pt,25pt) [circle,draw,fill=white,minimum size=10pt,inner sep=1pt] {\mbox{\tiny $10$}}; 
\node at (225pt,25pt) [circle,draw,fill=white,minimum size=10pt,inner sep=1pt] {\mbox{\tiny $10$}}; 
\node at (225pt,25pt) [circle,draw,fill=white,minimum size=10pt,inner sep=1pt] {\mbox{\tiny $10$}}; 
\end{scope}\end{tikzpicture}
\end{center}
The 4 sets of all arrows that point in the same direction will give us 4 perfect matchings (but these are not the only ones).

Unfortunately the matrix factorizations $\bar P_a$ will not correspond anymore to the divisors of the lattice points and hence there is no isomorphism
between $\mf(\qpol)$ and the category $A$ in \cite{Auroux}. This is to be expected because the dual dimer $\mirror{\qpol}$ tiles
the sphere with $2k$ squares, while $A=\fuk(\qpol')$ for a dimer $\qpol'$ that tiles the sphere with $2$ $2k+2$-gons.
\begin{center}
\begin{tikzpicture}
\draw [-latex,shorten >=5pt] (0cm,0cm) to node [rectangle,draw,fill=white,sloped,inner sep=1pt] {{\tiny x}} (2cm,0cm); 
\draw [-latex,shorten >=5pt] (0cm,2cm) to node [rectangle,draw,fill=white,sloped,inner sep=1pt] {{\tiny y}} (2cm,2cm); 
\draw [-latex,shorten >=5pt] (0cm,0cm) to node [rectangle,draw,fill=white,sloped,inner sep=1pt] {{\tiny x}} (0cm,2cm); 
\draw [-latex,shorten >=5pt] (2cm,0cm) to node [rectangle,draw,fill=white,sloped,inner sep=1pt] {{\tiny y}} (2cm,2cm); 
\draw [-latex,shorten >=5pt] (2cm,2cm) to (1.33cm,0.66 cm); 
\draw [-latex,shorten >=5pt] (0.66 cm,1.33cm) to (0cm,0cm); 
\draw [-latex,shorten >=5pt] (2cm,2cm) to (0.66cm,1.33cm); 
\draw [-latex,shorten >=5pt] (1.33cm,0.66 cm) to (0cm,0cm); 
\node at (-1cm,1cm) {\mbox{$\mirror{\qpol}:$}}; 
\node at (0cm,0cm) [circle,draw,fill=white,minimum size=10pt,inner sep=1pt] {\mbox{\tiny $1$}}; 
\node at (0cm,2cm) [circle,draw,fill=white,minimum size=10pt,inner sep=1pt] {\mbox{\tiny $n$}}; 
\node at (2cm,0cm) [circle,draw,fill=white,minimum size=10pt,inner sep=1pt] {\mbox{\tiny $n$}}; 
\node at (2cm,2cm) [circle,draw,fill=white,minimum size=10pt,inner sep=1pt] {\mbox{\tiny $2$}}; 
\node at (1.33cm,0.66cm) [circle,draw,fill=white,minimum size=10pt,inner sep=1pt] {}; 
\node at (0.66cm,1.33cm) [circle,draw,fill=white,minimum size=10pt,inner sep=1pt] {}; 
\node at (1cm,1.1cm) {\mbox{\tiny $\ddots$}}; 
\end{tikzpicture}
\hspace{2cm}
\begin{tikzpicture}
\node at (-1cm,1cm) {\mbox{$\qpol':$}}; 
\draw [-latex,shorten >=5pt] (0cm,0cm) to node [rectangle,draw,fill=white,sloped,inner sep=1pt] {{\tiny x}} (2cm,0cm); 
\draw [-latex,shorten >=5pt] (0cm,2cm) to node [rectangle,draw,fill=white,sloped,inner sep=1pt] {{\tiny y}} (2cm,2cm); 
\draw [-latex,shorten >=5pt] (0cm,0cm) to node [rectangle,draw,fill=white,sloped,inner sep=1pt] {{\tiny x}} (0cm,2cm); 
\draw [-latex,shorten >=5pt] (2cm,0cm) to node [rectangle,draw,fill=white,sloped,inner sep=1pt] {{\tiny y}} (2cm,2cm); 
\draw [-latex,shorten >=5pt] (2cm,2cm) to (1.33 cm,1.33 cm); 
\draw [-latex,shorten >=5pt] (0.66 cm,0.66cm) to (0cm,0cm); 
\node at (0cm,0cm) [circle,draw,fill=white,minimum size=10pt,inner sep=1pt] {\mbox{\tiny $n$}}; 
\node at (0cm,2cm) [circle,draw,fill=white,minimum size=10pt,inner sep=1pt] {\mbox{\tiny $1$}}; 
\node at (2cm,0cm) [circle,draw,fill=white,minimum size=10pt,inner sep=1pt] {\mbox{\tiny $1$}}; 
\node at (2cm,2cm) [circle,draw,fill=white,minimum size=10pt,inner sep=1pt] {\mbox{\tiny $2$}}; 
\node at (1cm,1.1cm) {\mbox{\tiny $\mirror{\ddots}$}}; 
\node at (1.33cm,1.33cm) [circle,draw,fill=white,minimum size=10pt,inner sep=1pt] {}; 
\node at (0.66cm,0.66cm) [circle,draw,fill=white,minimum size=10pt,inner sep=1pt] {}; 
\end{tikzpicture}
\end{center}
These two dimers have the same genus and number of punctures, so the twisted completions of $\fuk(\mirror{\qpol})$ and $A$ are the same.
One can construct the arrows in $\qpol'$ as complexes in $\fuk(\mirror{\qpol})$ and analogously one can check that the structure sheaves of the divisors
can be constructed by taking cones between the matrix factorizations coming from the arrows in $\qpol$.
\vspace{.3cm}

All this fits together in a broader framework which uses dimer models and GIT-quotients to construct
crepant resolutions \cite{Ishii,Moz,MB}. The main idea is that for each consistent dimer model on a torus and each generic stability condition
one can construct an equivalence between the Jacobi algebra and a commutative crepant resolution of a toric Gorenstein singularity. 
This also induces an equivalence between matrix factorizations of $\ell$ in both the dimer and the commutative crepant resolution.
In a follow-up paper we will explore this in more detail and construct an equivalence between the Karoubi completions of the category of singularities of the hypersurface defined by $\ell$ 
and the derived wrapped Fukaya category of the dual dimer.

\setcounter{section}{0}
\renewcommand\thesection{\Alph{section}}

\section{Appendix: Hochschild Cohomology and $\cA_\infty$-structures}\label{appendix}

In this appendix we look at the connection
between Hochschild cohomology and $\cA_{\infty}$-structures and calculate the Hochschild cohomology of the gentle categories coming from rectified quivers. 

We will define a certain $\cA_\infty$-structure $\genmu$ on these gentle categories and 
use our calculation to find criteria for when a given $\cA_{\infty}$-structure will be $\cA_\infty$-isomorphic to $\genmu$.

\subsection{Hochschild cohomology and $\cA_{\infty}$-structures}

The discussion in this section closely matches \cite{Auroux}.
If we are interested in $\cA_{\infty}$-structures on an ordinary category $\cB$ up to $\cA_\infty$-isomorphism, 
we need to study the Hochschild cohomology of $\cB$. 

A length $n$ multi-functor $\cF$ consists of linear maps for 
each sequence of $n+1$ objects $X_0,\dots, X_n\in \Ob\cB$. 
\[
 \cF : \Hom_\cB(X_{1},X_{0})\otimes\cdots \otimes \Hom_\cB(X_{n},X_{n-1})\to \Hom_\cC(X_{n},X_{0}). 
\] 
The set of all length $n$-multifunctors forms a $\Z$-graded vector space which we denote by $\cM^n$.

We can construct a differential $d:\cM^n\to \cM^{n+1}$
\se{
 d\cF(f_1,\dots,f_{n+1}) =& f_1\cF(f_2,\dots,f_{n+1})\\
-\cF(f_1f_2,\dots,f_{n+1})&+\cF(f_1,f_2f_3,\dots,f_{n+1})\dots \pm\cF(f_1,\dots, f_{n-1}f_{n})
\\&\mp \cF(f_1,\dots, f_{n-1})f_{n} 
}
The \emph{Hochschild cohomology} of $\cB$ is defined as the cohomology of the complex $\cM^\bullet$:
\[
 \Hoch(\cB) := \H(\cM^\bullet).
\]

A finite group $\grp G$ acting functorially on $\cB$ will give us a linear $\grp G$-action
on the space of multifunctors:
\[
 g : \cF \mapsto g^{-1}\circ \cF \circ g^{\otimes k}.
\]
This action commutes with the differential $d$ and the Hochschild cohomology splits as a direct sum
according to the different characters of $\grp G$.

The importance of the Hochschild cohomology of $\cB$ can be seen by the following $2$ lemmas.
\begin{lemma}\cite{Auroux}
Let $\mu_i$, $i\le k$ be a sequence of multifunctors with $\mu_1=0$ and $\mu_2=\cdot$ then we can rewrite $\mathrm{[M_k]}$ as
\[
 d\mu_k = \Phi
\]
where $\Phi$ is an expression calculated from the $\mu_i, i<k$ and 
$d\Phi=0$ if all $\mathrm{[M_i]}$ for $<k$ hold.

Moreover if the $\mu_i$ are invariant under the $\grp G$-action then $\Phi$ is invariant as well.
\end{lemma}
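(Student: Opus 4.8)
The statement to prove is essentially a reformulation of the $\cA_\infty$-associativity relations $\mathrm{[M_k]}$ in the language of the Hochschild differential $d:\cM^n\to\cM^{n+1}$ defined just above. I would proceed by directly unpacking $\mathrm{[M_k]}$ and separating off the terms involving $\mu_k$ and $\mu_1=0$ from the rest. Concretely, in the sum
\[
\sum_{s+l+t=k}(-1)^\sigma\mu(f_1,\dots,f_s,\mu(f_{s+1},\dots,f_{s+l}),f_{s+l+1},\dots,f_k)=0,
\]
the terms with $l=1$ vanish since $\mu_1=0$, and the terms with $l=k$ (so $s=t=0$) give $\mu_1(\mu_k(f_1,\dots,f_k))=0$, also vanishing. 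The remaining terms split into two families: those where the \emph{outer} multiplication is $\mu_k$ — i.e. $l$ of the inputs get grouped by some inner $\mu_l$ with $2\le l\le k-1$ but the outer map still has $k$ arguments, which forces the outer map to be $\mu_2=\cdot$ — wait, more carefully: the outer map has $s+1+t = k-l+1$ arguments, so it is $\mu_k$ precisely when $l=1$ (excluded) and it is $\mu_2$ precisely when $l=k-1$. So I need to be careful: the terms where $\mu_k$ appears as the \emph{inner} map are those with $l=k$, giving $\mu_1\mu_k=0$ (vanishes); the terms where $\mu_k$ appears as the \emph{outer} map need $k-l+1=k$, i.e. $l=1$, which vanishes by $\mu_1=0$.

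Hmm — so neither the inner nor outer $\mu_k$ survives directly. The resolution is the standard one: $\mu_k$ enters $\mathrm{[M_{k+1}]}$, not $\mathrm{[M_k]}$, or one reads $\mathrm{[M_k]}$ differently. Let me re-read: the lemma says "rewrite $\mathrm{[M_k]}$ as $d\mu_k=\Phi$". The point is that $d\mu_k\in\cM^{k+1}$, so this is an identity of $(k+1)$-multifunctors, and it is $\mathrm{[M_{k+1}]}$ that is being reorganized — or equivalently, in $\mathrm{[M_{k+1}]}$ the pieces where the inner or outer map is $\mu_2$ and the partner is $\mu_k$ assemble exactly into $d\mu_k(f_1,\dots,f_{k+1})$ by comparing with the explicit formula for $d\cF$ given in the excerpt. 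So the plan is: write out $\mathrm{[M_{k+1}]}$, isolate all terms in which $\mu_2$ is paired with $\mu_k$ (either $\mu_2$ outside grouping a $\mu_k$-block, or $\mu_2$ inside with the outer map being $\mu_k$), check that the signs $(-1)^\sigma$ with $\sigma = s+lt+(2-l)(\sum\deg f_i)$ match the alternating signs in the definition of $d\cF$ (keeping track of the degree-$(2-k)$ shift of $\mu_k$), and collect everything else into $\Phi$. Then $\Phi$ is manifestly built only from $\mu_i$ with $i<k$ (each surviving term pairs two multiplications whose total argument count is $k+1$ and neither of which is $\mu_{k+1}$ or $\mu_1$, hence both have index $\le k$, and the case where one is $\mu_k$ has been moved to the left-hand side).

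**The $d\Phi=0$ claim.** For the second assertion, that $d\Phi=0$ whenever all $\mathrm{[M_i]}$ with $i<k$ hold, I would use the fact that $d^2=0$ on $\cM^\bullet$ (a routine check, or cite it as standard) together with the already-established relation $d\mu_i=\Phi_i$ for each $i<k$, applied inductively. Since $\Phi=\Phi_k$ is a polynomial expression in the lower $\mu_i$, one computes $d\Phi$ using the Leibniz-type rule for $d$ on composite expressions; each application of $d$ to an inner $\mu_i$-block produces $d\mu_i=\Phi_i$, and the assumption that $\mathrm{[M_i]}$ holds for $i<k$ means precisely that $d\mu_i=\Phi_i$ with $\Phi_i$ already known to vanish on the nose — so the terms cancel in pairs. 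This is exactly the classical argument that the obstruction to extending an $\cA_\infty$-structure lives in Hochschild cohomology; I would phrase it as: $d\Phi$ is a sum of bracket-type terms $[\mu_i,\mu_j]$ with $i+j=k+2$, $i,j<k$, and the Jacobi-like identity among these (which is the reorganization of $d^2=0$) forces the sum to vanish once the lower relations are imposed.

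**Main obstacle.** The genuinely fiddly part — and the only real obstacle — is bookkeeping the signs: verifying that the sign $(-1)^\sigma$ from $\mathrm{[M_{k+1}]}$, when specialized to the terms with a $\mu_2$, reproduces exactly the signs $+,-,+,\dots$ in the stated formula for $d\cF$, and that the residual signs attached to the $\mu_i$-with-$\mu_j$ terms in $\Phi$ are consistent enough that the $d\Phi=0$ cancellation is sign-coherent. Since $\mu_k$ has degree $2-k$, moving it past homogeneous arguments and past other $\mu_j$'s introduces Koszul signs that must be tracked against the $(2-l)(\sum\deg f_i)$ factor in $\sigma$; this is where essentially all the work lies. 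I would handle it by the standard device of working with the bar complex / suspended maps (shifting all $\Hom$-spaces by $[1]$ so that $\mu_k$ becomes degree $+1$ and all signs become Koszul signs of the shifted degrees), in which both $\mathrm{[M_\bullet]}$ and the Hochschild differential take their cleanest form and the identification $d\mu_k=\Phi$ is the definitional splitting of the coderivation-square condition — but since the excerpt works with the unshifted conventions of \cite{Auroux}, I would either import their sign lemma directly or redo the shift-and-compare once, carefully, and then quote it.
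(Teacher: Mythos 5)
The paper does not prove this lemma; it cites it to \cite{Auroux} and moves on, so there is no proof here to compare against. Judged on its own, your sketch identifies the correct mechanism: the $(k+1)$-argument $\cA_\infty$-relation decomposes into the $k+2$ terms in which $\mu_k$ is paired with $\mu_2$ (one as inner/outer pairing each way), which assemble into the Hochschild differential $d\mu_k\in\cM^{k+1}$, plus a remainder $\Phi$ involving only $\mu_i$ with $3\le i\le k-1$; and $d\Phi=0$ then follows from $d^2=0$ together with the lower relations, most cleanly phrased via the Gerstenhaber bracket and its graded Jacobi identity. You were right to flag the index shift: with the paper's literal definition of $\mathrm{[M_k]}$ (a $k$-argument identity) the extracted piece is $d\mu_{k-1}$, not $d\mu_k$; the statement as written implicitly uses the indexing of \cite{Auroux}, and your reading (work with the $(k+1)$-argument identity) is the correct one. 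Two small cautions: your sentence "$\Phi_i$ already known to vanish on the nose" is imprecise --- the lower hypotheses give $d\mu_i=\Phi_i$, not $\Phi_i=0$, and the cancellation in $d\Phi$ comes from substituting these into the Jacobi-type reorganization of $d^2=0$ rather than from termwise vanishing; and the paper's displayed formula for $d\cF$ has an index typo in its last two terms (they should read $\cF(f_1,\dots,f_nf_{n+1})$ and $\cF(f_1,\dots,f_n)f_{n+1}$), which you implicitly corrected. Your proposal is essentially the standard argument and is sound.
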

\begin{lemma}\cite{Auroux}
Let $\mu$ and $\nu$ be two $\cA_{\infty}$-structures on $\cB$ and 
let
$\cF_i$, $i\le k$ be a sequence of multifunctors with $\cF_1=\Id$ then we can rewrite $\mathrm{[F_{k+1}]}$ as
\[
 d\cF_k = \Psi
\]
where $\Psi$ is an expression calculated from the $\mu_i$, the $\nu_i$ and the $\cF_i, i<k$. 
Moreover $d\Psi=0$ if all $\mathrm{[F_i]}$ for $i<k$ hold.

Moreover if the $\mu_i, \nu_i$ and $\cF_i$ are invariant under the $\grp G$-action then $\Psi$ is invariant as well.
\end{lemma}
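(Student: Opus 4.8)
The plan is to prove both assertions by expanding the functor relation $\mathrm{[F_{k+1}]}$ and sorting its terms, the cleanest bookkeeping being through the bar construction. Since $\mu_1=\nu_1=0$ (the structures are minimal), the families $(\mu_i)$ and $(\nu_i)$ assemble into square-zero coderivations $b,b'$ on the tensor coalgebra $\bigoplus_{n\ge 1}\cB^{\otimes n}$, the multifunctors $(\cF_i)$ with $\cF_1=\Id$ assemble into a counital coalgebra endomorphism $F=\Id+\cF_2+\cF_3+\cdots$, and the whole list of relations $\mathrm{[F_n]}$ is the single equation $b'F=Fb$ read off in each arity, $\mathrm{[F_n]}$ being its component on length-$n$ inputs. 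In this dictionary the Hochschild differential of the previous subsection is the graded bracket with the common composition $m=\mu_2=\nu_2$ of $\cB$, so $d$ only sees the arity-two parts of $b$ and $b'$.

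First I would read off $\mathrm{[F_{k+1}]}$ explicitly on a length-$(k+1)$ sequence $f_1,\dots,f_{k+1}$. On the $Fb$ side every term using $\mu_1$ vanishes, so the potential term involving $\cF_{k+1}$ disappears, and the only terms feeding $\cF_k$ are those that use $\mu_2$ to merge two consecutive inputs, producing $\sum_i\pm\cF_k(f_1,\dots,f_if_{i+1},\dots,f_{k+1})$. On the $b'F$ side $\nu_1$ again kills the $\cF_{k+1}$ term, and the only terms feeding $\cF_k$ are those in which $\nu_2$ is paired with one factor $\cF_1=\Id$, producing $\pm f_1\cF_k(f_2,\dots,f_{k+1})$ and $\pm\cF_k(f_1,\dots,f_k)f_{k+1}$. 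Collecting exactly these three families assembles into $\pm d\cF_k(f_1,\dots,f_{k+1})$; every remaining term — on the $Fb$ side the terms using some $\mu_j$ with $j\ge 3$, whose outer functor is then some $\cF_m$ with $m\le k-1$ (the extreme case $m=1$ being $\mu_{k+1}(f_1,\dots,f_{k+1})$), and on the $b'F$ side the terms using $\nu_r$ with $r\ge3$ or $\nu_2$ paired with two higher components $\cF_i$ of index $<k$ — involves only the $\mu_i$, the $\nu_i$ and the $\cF_i$ with $i<k$. Calling this remainder $\Psi$, the relation $\mathrm{[F_{k+1}]}$ becomes precisely $d\cF_k=\Psi$, which is the first assertion.

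For the second assertion set $G:=b'F-Fb$. From $b'^2=0$ and $b^2=0$ one gets immediately
\[
b'G+Gb=b'(b'F-Fb)+(b'F-Fb)b=-b'Fb+b'Fb=0 .
\]
By hypothesis $\mathrm{[F_i]}$ holds for every $i<k$, i.e.\ $G$ vanishes on all inputs of length $<k$. Reading $b'G+Gb=0$ on length-$(k+1)$ inputs and retaining, exactly as in the previous step, only the part built from $m=\mu_2=\nu_2$, every contribution of $G$ in length $<k$ drops out and what survives is $d(G_{k+1})=0$; since $G_{k+1}=\pm(d\cF_k-\Psi)$ and $d^2=0$ this yields $d\Psi=0$. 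Equivalently, without the coalgebra language, one differentiates the explicit formula for $\Psi$ and rewrites the $\mu$– and $\nu$–quadratic pieces using the identities $\mathrm{[M_i]}$ and the $\cF_{<k}$–pieces using $\mathrm{[F_i]}$, $i<k$; everything then cancels in telescoping pairs, which is the computation appearing in \cite{Auroux}.

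The only real obstacle is the sign bookkeeping. One must fix mutually compatible Koszul-sign conventions in $\mathrm{[M_n]}$, $\mathrm{[F_n]}$, the Hochschild differential $d$ and the bar coderivations, and then verify that the three families of terms isolated above reassemble into $\pm d\cF_k$ with a single global sign, and likewise that the surviving part of $b'G+Gb=0$ is exactly $\pm\,d(G_{k+1})$. All of this is routine and parallels \cite{Auroux}; the conceptual inputs — squared-zero-ness of $b$ and $b'$, minimality $\mu_1=\nu_1=0$, and the coincidence $\mu_2=\nu_2$ on $\cB$ — are already available.
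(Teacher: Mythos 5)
The paper itself cites this lemma from \cite{Auroux} and gives no proof, so there is no internal argument to compare against; your bar-construction argument is the standard one and is the argument implicitly behind the citation. The conceptual scaffolding is right and complete: packaging $\mu,\nu$ into square-zero coderivations $b,b'$ on the tensor coalgebra and the $\cF_i$ (with $\cF_1=\Id$) into a coalgebra morphism $F$, reading $\mathrm{[F_n]}$ as the arity-$n$ component of $b'F=Fb$, using $\mu_1=\nu_1=0$ to isolate the $\cF_k$-dependent part of $\mathrm{[F_{k+1}]}$ as $\pm d\cF_k$, and then deducing $d\Psi=0$ from the identity $b'G+Gb=0$ for $G=b'F-Fb$.

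There are two indexing slips worth flagging. First, the component of $b'G+Gb=0$ that produces $dG_{k+1}$ lives in arity $k+2$, not $k+1$: the Hochschild differential raises arity by one, so you should read the identity on $k+2$ inputs. Second, and more substantively, when you expand $(b'G+Gb)_{k+2}$ and discard the part assembling to $dG_{k+1}$, the leftover terms involve $G_j$ for all $j\le k$, not only $j<k$: for instance $\nu_2(\cF_2(f_1,f_2),G_k(f_3,\dots,f_{k+2}))$ and $G_k(f_1,\dots,\mu_3(f_i,f_{i+1},f_{i+2}),\dots,f_{k+2})$ survive and are not controlled by the hypothesis as you have stated it ($G$ vanishing in length $<k$). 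To conclude $dG_{k+1}=0$ one therefore needs $G_k=0$ as well, i.e.\ the hypothesis should be read as ``$\mathrm{[F_i]}$ for $i\le k$'' (equivalently $i<k+1$). This off-by-one is already present in the paper's statement of the lemma (and in its companion statement for $\mathrm{[M_k]}$, where $d\mu_k$ should really arise from $\mathrm{[M_{k+1}]}$), so you have simply inherited it; once the hypothesis is corrected, the argument you give goes through, modulo the sign bookkeeping you rightly defer.
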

These lemmas imply that we can find a solution for $\mu_k$ (or $\cF_k$) if and only if 
the homology class of $\Phi$ (or $\Psi$) is trivial. 

\begin{remark}
The $\grp G$-invariance is not discussed in \cite{Auroux} but follows easily from the expressions for $\Psi$ and $\Phi$.
\end{remark}

\begin{lemma}\label{gaction}
Let $\cB$ be an algebra, $\grp G$ a finite group of automorphisms of $\cB$, and $\mu$, $\nu$ two $\grp G$-invariant
$\cA_\infty$-structures.
If $\mu$ and $\nu$ are $\cA_\infty$-isomorphic then the corresponding $\cA_\infty$-structures on the ring of invariants
$\cB^\grp G$ are also $\cA_\infty$-isomorphic.
\end{lemma}
\begin{proof}
Let $\cF$ be the $\cA_\infty$-isomorphism between $\mu$ and $\nu$. We can split $\cF_k$ as a direct sum of a $\grp G$-invariant 
part and some other stuff $\cF^\grp G_k\oplus \text{rest}$.
The equation $d\cF_k=\Psi$ splits in $d\cF_k^{\grp G}=\Psi$ and $d\text{rest}=0$.
This shows that $\cF_k^{\grp G}$ is also an $\cA_\infty$-isomorphism between $\mu$ and $\nu$.
Clearly $\cF_k^{\grp G}|_{{\cB^\grp G}^\otimes k}$ maps to ${\cB^\grp G}$, so it gives us an
$\cA_\infty$-isomorphism between $\mu|_{\cB^\grp G}$ and $\nu|_{\cB^\grp G}$.
\end{proof}

\subsection{Hochschild cohomology of the gentle categories}

We will calculate the Hochschild cohomology of the gentle category $\cB = \cQA(\rect\qpol)$ where $\qpol$ is any embedded quiver.
We will view $\cB$ as an algebra in this section.
For the calculation 
we will use a minimal bimodule resolution for $\cB$, which can be obtained by a result by Bardzel \cite{bardzell}. 
\begin{theorem}[Bardzel]
Suppose $\cB$ is the path algebra of a quiver $Q$ modulo relations that are all paths of length two.
Let $Z_k \subset \C Q$ be the vector space spanned by paths of length $k$ of which all subpaths of length $2$ are zero in $\cB$.

The terms in minimal bimodule resolution $\mathtt{P}^\bullet$ 
are given by $\mathtt{P}_k = \cB\otimes Z_k \otimes \cB$ where the tensor product is taken over $\C Q_0$.
The maps between the terms have the following form
\[
 1\otimes b_1\dots b_k \otimes 1 \mapsto b_1\otimes b_{2}\dots b_k \otimes 1 - (-1)^k \otimes b_1\dots b_{k-1}\otimes b_k
\]
\end{theorem}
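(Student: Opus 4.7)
The plan is to exhibit $(\cP^\bullet, d)$, together with the multiplication augmentation $\mu : \cB \otimes_{\C Q_0} \cB \to \cB$, as a minimal projective bimodule resolution of $\cB$. Projectivity of each $\cP_k = \cB \otimes_{\C Q_0} Z_k \otimes_{\C Q_0} \cB$ is immediate because $\C Q_0$ is semisimple, so $\cB \otimes_{\C Q_0} \cB$ is a summand of a free bimodule, and tensoring over $\C Q_0$ with the finite-dimensional middle factor $Z_k$ preserves projectivity. The identity $d^2=0$ reduces to a direct calculation: applying $d$ twice to $1\otimes b_1\cdots b_k\otimes 1$ produces two cross terms whose middle tensor factors contain $b_1 b_2$ or $b_{k-1}b_k$, both of which vanish in $\cB$ since every length-two subpath of a generator of $Z_k$ is a relation, while the two remaining terms cancel thanks to the alternating sign $(-1)^k$.

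The main work is exactness. I would construct a contracting homotopy $s_k : \cP_k \to \cP_{k+1}$ as a map of left $\cB$-modules. Expanding in the monomial basis of $\cB$, every element of $\cP_k$ is a linear combination of terms $p \otimes w \otimes q$ with $w \in Z_k$ and $p, q$ monomials. Define $s$ by pulling the first arrow of $q$ backwards into the middle factor whenever the concatenated word $w\cdot q[0]$ still lies in $Z_{k+1}$, and setting it to zero otherwise. The verification that $ds+sd=\mathrm{id}$ on $\cP_k$ for $k>0$ and that $\mu s_0 + d s_{-1} = \mathrm{id}$ proceeds by a case split, according to whether the last arrow of $w$ and the first arrow of $q$ compose to a relation, extend to a longer element of $Z_{k+1}$, or fail to be composable at a common vertex. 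This is where the hypothesis that the relations are monomials of length exactly two enters decisively, since it guarantees that at each stage at most one of the two cases occurs and the homotopy relation closes up in one step rather than requiring an iterative reduction as in Anick's general construction.

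Minimality is then automatic: both summands of $d(1\otimes b_1\cdots b_k\otimes 1)$ carry an arrow $b_i \in \mathrm{rad}\,\cB$ across the boundary tensor, so after tensoring on either side with $\cB/\mathrm{rad}\,\cB$ the differential is zero. The main obstacle throughout is the combinatorial bookkeeping needed to define $s$ so that it is simultaneously well-defined modulo the quadratic relations, compatible with the tensor structure over $\C Q_0$, and satisfies the homotopy identity with matching signs. Once this is set up, exactness, projectivity, and minimality together give the claimed resolution, which in the next subsection will be used to identify the Hochschild complex of $\cB$ with $\mathrm{Hom}_{\cB\text{-}\cB}(\cP^\bullet, \cB)$ and to compute $\Hoch(\cB)$ explicitly.
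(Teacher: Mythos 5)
The paper does not prove this statement: it is imported as a known result with the citation \cite{bardzell}, so there is no internal proof to compare against. Your argument is correct and is essentially the standard proof for the quadratic monomial case. The $d^2=0$ check via vanishing of the extremal length-two subwords, the one-sided (left-module) contracting homotopy that absorbs the leading arrow of the right tensor factor into the middle factor precisely when it extends the current element of $Z_k$, and the observation that the differential lands in $\mathrm{rad}\,\cB\cdot\cP_{k-1}+\cP_{k-1}\cdot\mathrm{rad}\,\cB$ are all exactly the right ingredients. Two points worth making explicit if you write this out in full. First, the cited result of Bardzell is actually proved in greater generality, for arbitrary monomial algebras, using ``associated sequences of paths''; in the quadratic case those reduce precisely to your $Z_k$, so your argument is a clean self-contained specialization of a more general theorem. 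Second, the sign in the homotopy is not free: taking $s(1\otimes b_1\cdots b_k\otimes a q')=(-1)^k\,1\otimes b_1\cdots b_k a\otimes q'$ when $b_k a$ is a relation (and $0$ otherwise) is forced by the $(-1)^k$ in the stated differential, and with that convention the three-way case split (relation, no relation, trivial $q$) closes up to give $ds+sd=\mathrm{id}$ in a single step exactly as you claim; you are also right that this one-step closure relies on the relations being monomials of length exactly two, which is why the quadratic hypothesis is essential and why no Anick-style iterative reduction is needed.
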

As is well known the Hochschild cohomology of $\cB$ is the cohomology of the complex
\[
 \Hom_{\cB\otimes \cB^{opp}}(\mathtt{P}^\bullet,\cB).
\]
In the case gentle algebras $\cB = \cQA(\rect\qpol)$,
the complex $\mathtt{P}^{\bullet>1}$ splits as a direct sum where each summand $\mathtt{P}_\beta^\bullet$ 
only contains the paths that go around one cycle
$\beta \in \rect\qpol_2^+$. So let us focus on one such cycle $\beta=\beta_1\dots \beta_l$ and 
set $\beta_k$ to be $\beta_{k \text{ mod } l}$ for all $k \in \Z$. By construction $\beta_{k}=\beta_{k'}$ if and only
if $k=k' \text{ mod } l$. Now
\[
 \Hom_{\cB\otimes \cB^{opp}}(\cB\otimes \beta_i\dots \beta_j\otimes \cB,\cB) \cong  h(\beta_i)\cB t(\beta_j)
\]
and we will use the notation $[\beta_i\dots \beta_j\to \xi]$ for the morphism that maps
$\beta_i\dots \beta_j$ to $\xi \in h(\beta_i)\cB t(\beta_j)$. 

\begin{proposition}
For a gentle algebra $\cQA(\rect \qpol)$ the homology of $\Hom(\mathtt{P}^{\bullet}_\beta,\cB)$ 
in degree $nl$ with $n\ge 1$ is one and spanned by 
\[
\Omega_0^{\beta,n} = \sum_{i=1}^{l} (-1)^{i(nl-1)}[\beta_{i}\dots \beta_{i+nl-1}\to h(\beta_i)],  
\]
and in degree $nl+1$ with $n\ge 1$ it is one and spanned by
\[
\Omega_1^{\beta,n} = \sum_{i=1}^{l} (-1)^{i(nl+1)}[\beta_i \dots \beta_{i+nl} \to \beta_i]. 
\]
in all other degrees $>1$ it is zero.
\end{proposition}
\begin{proof}
By lemma \ref{basispaths}, every element in $h(\beta_i)\cB t(\beta_j)$ is a linear combination of
paths $\xi=\alpha_u\dots \alpha_{v}$ where the arrows are consecutive arrows in a negative cycle $\alpha$.
If $h(\beta_i)=t(\beta_j)$ we also have the degenerate case $h(\beta_i) \in h(\beta_i)\cB t(\beta_j)$.

With the notation above, the differential on $\Hom_{\cB\otimes \cB^{opp}}(\mathtt{P}^\bullet,\cB)$
becomes
\[
 d[\beta_i\dots \beta_j \to \xi] = [\beta_{i-1}\dots \beta_j \to \beta_{i-1}\xi] -(-1)^{j-i} [\beta_{i}\dots \beta_{j+1} \to  \xi\beta_{j+1}] 
\]
and both $\Ker d$ and $\Image d$ split as a direct sum of spaces $(\Ker d)_{r,s}$ and $(\Image d)_{r,s}$ generated by elements
$$[\underbrace{\beta_i\dots \beta_j}_{r=j-i+1} \to \underbrace{\alpha_u\dots \alpha_{v}}_{s=v-u+1}]$$ 
with fixed lengths $r,s$.

Observe that if $r,s>0$ we have that either $\alpha_u=\beta_{i}$ or $\alpha_{u-1}=\beta_{i-1}$ (by $\alpha_{u-1}$ we mean the arrow
proceeding $\alpha_u$ in the negative cycle). This follows from the fact that $\rect\qpol$ is a rectified quiver: in every vertex 
each of the two positive cycles shares an arrow with each of the two negative cycles.
Similarly either $\alpha_v=\beta_{j}$ or $\alpha_{v+1}=\beta_{j+1}$.

The first term of the differential $d[\beta_i\dots \beta_j \to \xi]$ is zero if and only if $\alpha_u=\beta_{i}$ and the second term
if and only if $\alpha_v=\beta_{j}$. Note also that if a certain term appears as the left term of
$d[\beta_i\dots \beta_j \to \xi]$ then it can only appear a second time as the right term of $d[\beta_{i-1}\dots \beta_{j-1} \to \beta_{i-1}\xi\beta_{j}^{-1}]$.
Similarly, if a term appears as a right term of $d[\beta_i\dots \beta_j \to \xi]$ 
then it can only appear a second time as the left term of $d[\beta_{i+1}\dots \beta_{j+1} \to \beta_{i}^{-1}\xi\beta_{j+1}]$.  

To calculate $(\Ker d)_{r,s}/(\Image d)_{r,s}$ with $r>1$, we consider 3 cases:
\begin{itemize}
 \item[\framebox{$s>1$}] 
If $\xi$ has length at least $2$ and the left term of $d[\beta_i\dots \beta_j \to \xi]$ also occurs in another $d[\dots]$ 
then  $\beta_{i-1}\xi\beta_{j}^{-1}$ is nonzero. Therefore $\xi$ ends in $\beta_j$ and the right term of 
$d[\beta_i\dots \beta_j \to \xi]$ is zero. Similarly if the right term of $d[\beta_i\dots \beta_j \to \xi]$ occurs twice
then the left term is zero.

Therefore $(\Ker d)_{r,s}$ with $s>1$ is spanned by 
elements of the form $[\beta_i\dots \beta_j \to \xi] -(-1)^{j-i} [\beta_{i-1}\dots \beta_{j-1} \to \beta_{i-1}\xi\beta_{j}^{-1}]$
with $\alpha_v=\beta_j$ and $\alpha_u\ne \beta_i$
and elements of the form $[\beta_i\dots \beta_j \to \xi]$ with both $\alpha_v=\beta_j$ and $\alpha_u= \beta_i$.

Both types can be written as $d[\beta_i\dots \beta_{j-1} \to \xi\beta_{j}^{-1}]$ so 
$(\Ker d)_{r,s}/(\Image d)_{r,s}=0$. 
\item[\framebox{$s=1$}]
If $\xi$ has length $1$ then either $\xi$ is an arrow of the positive cycle $\beta$ or not.
In the latter case $d[\beta_i\dots \beta_j \to \xi]$ will have two nonzero terms, which each
do not appear in any other $d[\dots]$ so we cannot combine $[\beta_i\dots \beta_j \to \xi]$ with other things to create 
an element in $(\Ker d)_{r,1}$.

In the former case $d[\beta_i\dots \beta_j \to \xi]=0$ and we have that $\xi=\beta_i=\beta_j$ and $j-i=0 \mod l$. Now
$[\beta_i\dots \beta_j \to \beta_i]- (-1)^{j-i+1} [\beta_{i-1}\dots \beta_{j-1} \to \beta_{i-1}]$ is in $(\Image d)_{r,1}$ so
$(\Ker d)_{r,1}/(\Image d)_{r,1}$ is generated by
\[
\Omega_1^{\beta,n} =  \sum_{0\le u\le l-1}(-1)^{u(j-i+1)}[\beta_{i+u}\dots \beta_{j+u}\to \beta_{i+u}]
\] 
if $r=nl+1$ for some $n$ and zero otherwise.
\item[\framebox{$s=0$}] If $\xi$ has length zero and $\beta_i\dots \beta_j$ is not a power of the full cycle $\beta$ then
both terms of $d[\beta_i\dots \beta_j \to \xi]$ do not occur in another $d[\dots]$ so we cannot combine
such $[\beta_i\dots \beta_j \to \xi]$ to a cocycle. Therefore $(\Ker d)_{r,0}=0$ if $r\ne 0\text{ mod } l$.

If $j=i+nl-1$ then $d[\beta_i\dots \beta_j \to h(\beta_i)]=\beta_{i-1}\dots \beta_j \to \beta_{i-1} -(-1)^{j-i} \beta_{i}\dots \beta_{j+1} \to \beta_{j+1}$
and we can make a cocycle by adding all $l$ cyclic shifts together.
Hence, the kernel
is spanned by 
\[
\Omega_0^{\beta,n} = \sum_{0\le u< l}(-1)^{u(l-1)}[\beta_{i+u}\dots \beta_{j+u}\to h(\beta_{i+u})]
\]
while $(\Image d)_{r,0}=0$.
\end{itemize}
\end{proof}

\begin{remark}
In \ref{Xgrading} we explained that a vector field $X$ on the surface $|\qpol|\setminus \qpol_0$ induces a grading $\deg_X$ on $\cQA(\rect \qpol)$ and by consequence also a $\Z$-grading on the Hochschild cohomology. 
The degree of any positive cycle of length $l$ in $\cQA(\rect \qpol)$ is $l-2$, so
according to this grading we have
\se{
\deg_X \Omega^{\beta,n}_0 = \deg_X \Omega^{\beta,n}_1= n(2-l)
}
\end{remark}

Now we go back to the original viewpoint of the Hochschild cohomology. 
Given a multifunctor $\Psi$ of length $k$ with $d\Psi=0$, how can we detect whether its cohomology class is zero?
The idea is to look at the image of $k$-tuples of the form $(\beta_1,\dots,\beta_k)$ where the $\beta_i$ are consecutive arrows in a positive cycle:

\begin{lemma}\label{whenhomzero}
Let $\Psi$ be a multifunctor of length $u$ with $d\Psi=0$ then
$\Psi \in \Image d$ if $\Psi(\beta_1,\dots,\beta_u)$ has no length $0$ terms or length $1$ terms for
every $(\beta_1,\dots,\beta_u)$ where the $\beta_i$ are consecutive arrows in a positive cycle.
\end{lemma}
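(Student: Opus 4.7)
The plan is to prove the contrapositive reading of the statement, namely that under the stated hypothesis the cohomology class $[\Psi] \in \Hoch^u(\cB)$ vanishes, which is equivalent to saying $\Psi$ is a coboundary. Since the Bardzel resolution splits as $\cP^\bullet = \bigoplus_c \cP^\bullet_c$ over positive cycles $c$, the Hochschild cohomology decomposes accordingly, and by the preceding proposition $\Hoch^u(\cB)$ has a basis consisting of the classes $[\Omega_0^{c,n}]$ with $u = nl_c$ together with the classes $[\Omega_1^{c,n}]$ with $u = nl_c+1$. Writing $[\Psi] = \sum \lambda_{c,n}[\Omega_0^{c,n}] + \sum \mu_{c,n}[\Omega_1^{c,n}]$, the task reduces to showing every coefficient vanishes.

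For the length-$nl_c$ family, fix such a pair $(c,n)$ and evaluate $\Psi$ on a consecutive sequence $(b_i,\ldots,b_{i+nl-1})$ of arrows of $c$. Because every arrow of a rectified dimer sits in a unique positive cycle, this input lies only in the summand $(Z_u)_c$, so $\Omega_0^{c',n'}$ with $c'\neq c$ evaluates to zero on it, and $\Omega_1$-generators are in the wrong cochain degree. A coboundary $d\Phi$ simplifies drastically: the gentle relations $b_jb_{j+1}=0$ annihilate every internal term of the Hochschild differential, leaving only $b_i\Phi(b_{i+1},\ldots,b_{i+nl-1}) \pm \Phi(b_i,\ldots,b_{i+nl-2})b_{i+nl-1}$, each of path-length at least one. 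Hence $\Psi(b_i,\ldots,b_{i+nl-1})$ agrees with $\pm\lambda_{c,n}h(b_i)$ modulo length-$\geq 1$ terms, and the hypothesis of no length-$0$ term immediately yields $\lambda_{c,n}=0$.

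The same reduction in the length-$(nl_c{+}1)$ case gives $\Psi(b_i,\ldots,b_{i+nl}) = \pm\mu_{c,n}b_i + d\Phi(b_i,\ldots,b_{i+nl})$, but here $d\Phi$ can contribute genuine length-$1$ terms: the length-$0$ components of $\Phi$ on the two sub-inputs of length $nl$ are multiplied on the left by $b_i$ and on the right by $b_{i+nl}=b_i$, producing scalar multiples of $b_i$. To eliminate this ambiguity one forms the signed cyclic sum $\sum_{i=1}^{l_c} (-1)^{i(nl+1)} \cdot (\text{coefficient of } b_i \text{ in } \Psi(b_i,\ldots,b_{i+nl}))$, weighted by exactly the signs appearing in $\Omega_1^{c,n}$. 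The coboundary contribution telescopes around the cyclic indexing (the contribution from $\Phi(b_i,\ldots,b_{i+nl-1})$ at index $i$ cancels that from $\Phi(b_{i+1},\ldots,b_{i+nl+1})$ at index $i{+}1$), so this signed sum is a cohomological invariant equal, up to the nonzero scalar $l_c$, to $\mu_{c,n}$. The hypothesis forces each summand to be zero, hence $\mu_{c,n}=0$.

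The main obstacle is this telescoping step: unlike the length-$nl$ case, coboundaries are not length-$1$-free on consecutive-arrow inputs, so the naive "read off the coefficient of $b_i$" is not well-defined on cohomology classes. One must align the alternating signs $(-1)^{i(nl+1)}$ inherited from $\Omega_1^{c,n}$ with the sign in the outer term of the Hochschild differential so that the boundary contributions cancel when summed cyclically in $i$; the rest of the argument is essentially automatic once this invariant has been identified.
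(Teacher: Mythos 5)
Your proposal is correct and takes essentially the same route as the paper: you expand $[\Psi]$ in the cohomology basis $\{\Omega_0^{c,n},\Omega_1^{c,n}\}$ from the preceding proposition, extract the $\Omega_0$-coefficients from the length-$0$ part of $\Psi$ on consecutive arrows (since coboundaries there contribute only terms of length $\geq 2$), and extract the $\Omega_1$-coefficients via the signed cyclic sum of length-$1$ coefficients, which vanishes on coboundaries by telescoping. The only slips are cosmetic indexing issues — the outer sum should run $i=1,\dots,nl$ to match the definition of $\Omega_1^{c,n}$, and the cancelling partner in the telescope is $\Phi(b_{i+1},\dots,b_{i+nl})$ rather than $\Phi(b_{i+1},\dots,b_{i+nl+1})$, which has one argument too many for a cochain of length $nl$ — and these do not affect the argument.
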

\begin{proof}
For $u=0\mod l$ the homology class $\Omega_0^{\beta,n}(\beta_i,\dots, \beta_{i+nl-1})=h(\beta_i)$ contains a length $0$ term.
Now if $\Pi$ is any multifunctor of length $nl-1$ then one can check that
\[
 d\Pi(\beta_i,\dots, \beta_{i+nl-1})=
\beta_i \Pi(\beta_{i+1},\dots, \beta_{i+nl-1}) -0 +\dots \mp 0 \pm \Pi(\beta_i,\dots, \beta_{i+nl-2})\beta_{i+nl-1}
\]
has no terms of length $\le 1$ because neither  $\Pi(\beta_{i+1},\dots, \beta_{i+nl-1})$ nor $\Pi(\beta_i,\dots, \beta_{i+nl-2})$ 
can be a vertex. So if $\Psi= \kappa \Omega_0^{\beta,n} +d\Pi$ has no length zero terms then $\kappa=0$ and $\Psi \in \Image d$. 

Similarly for $u=1\mod l$, if $\Pi$ is any multifunctor of length $nl$ then
we can write $d\Pi(\beta_i,\dots, \beta_{i+nl})=\lambda^{\Pi}_i \beta_i + \dots$ where $\lambda^{\Pi}_i\in \C$ can be determined
from the constant terms in $\Pi(\beta_{i+1},\dots, \beta_{i+nl})$ and $\Pi(\beta_i,\dots, \beta_{i+nl-1})$.
Therefore one easily checks that $$\sum_{i=1}^{nl} (-1)^{(nl+1)i}\lambda_i=0.$$

On the other hand
$\Omega_1^{c,n}(\beta_i,\dots, \beta_{i+nl})=\lambda^{\Omega}_i \beta_i$  with $\lambda^{\Omega}_i=(-1)^{i(nl+1)}$
so in that case we get $\sum_{i=1}^{nl} (-1)^{(nl+1)i}\lambda^{\Omega}_i={nl}\ne 0$.
So if $\Psi= \kappa \Omega_0^{\beta,1} +d\Pi$ has no length 1 terms then all $\lambda^{\Pi}_i=0$ so
$$
0 = \sum_{i=1}^{nl} (-1)^{(nl+1)i}\lambda^{\Psi}_i = \sum_{i=1}^{nl} (-1)^{(nl+1)i}(\lambda^{\Pi}_i +\kappa \lambda^{\Omega}_i) 
$$
So $\kappa=0$ and $\Psi \in \Image d$. 
\end{proof}

\subsection{An $\cA_{\infty}$-structure on $\cQA(\rect\qpol)$}
We will now describe a specific $\cA_{\infty}$-structure on $\cQA(\rect\qpol)$, which can be constructed inductively. 
For any sequence of paths $\rho_1,\dots,\rho_k$ and any cycle $\beta_1\dots \beta_l \in \rect \qpol_2^+$ with $h(\beta_1)=t(\rho_i)$ 
we set 
\[
\genmu(\rho_1,\dots, \rho_i\beta_{1},\beta_{2},\dots, \beta_{l-1},\beta_{l}\rho_{i+1},\dots, \rho_k) := 
(-1)^s \genmu(\rho_1,\dots,\rho_k).
\]
with sign convention $s =l(\rho_1+\dots + \rho_{i} + k-i)$. This can be illustrated by the following diagram: 
\[
\genmu\left(
\vcenter{\xymatrix@=.3cm{&\ar[dr]&\\
\ar[ur]&&\ar[lldd]_(.8){\rho_{i}\beta_1}\\
&&\\
\dots&&\dots\ar[lluu]_(.2){\beta_l\rho_{i+1}}
}}
\right)=\pm 
\genmu\left(
\vcenter{\xymatrix@=.3cm{
&\ar[ld]_{\rho_{i}}&\\
\dots&&\dots\ar[lu]_{\rho_{i+1}}
}}
\right).
\]
For $k>2$ we set $\genmu(\sigma_1,\dots, \sigma_k)=0$ if we cannot perform any reduction of the form above and for 
$k=2$ we use the ordinary product on $\cQA(\rect \qpol)$.

\begin{lemma}
The rule above makes $\genmu_u$ well defined for all $u$.
\end{lemma}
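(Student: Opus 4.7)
The plan is to prove well-definedness by strong induction on the total arrow length $N := \sum_{j=1}^{u} |q_j|$ of the argument tuple $(q_1,\dots,q_u)$. For the base case (tuples so short that no reduction of the prescribed form can apply), the definition fixes $\mu^\kappa_u$ unambiguously as either $0$ (if $u>2$) or the ordinary product (if $u=2$), and there is nothing to check. For the inductive step, assuming $\mu^\kappa_v$ is well defined on every tuple of total length strictly less than $N$, I must verify that whenever $(q_1,\dots,q_u)$ admits two distinct valid reductions, they yield the same value. Since each reduction strictly decreases the total arrow length by $nl \ge 2$, the recursion terminates, and well-definedness amounts to a confluence statement.

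The first observation is that a valid reduction is completely specified by a window of consecutive argument positions together with a cycle $c=b_1\cdots b_l \in \rect\qpol_2^+$ and a power $n$, and that the single-arrow interior arguments $b_2,\dots,b_{nl-1}$ determine $c$ uniquely: every arrow of $\rect\qpol$ lies in exactly one positive cycle. The power $n$ and the alignment of $c$ within the window are then forced by the window's length and its first interior arrow. Hence the problem reduces to comparing two windows inside a single tuple.

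Given two distinct reductions $R,R'$, I would first handle the case where their windows are disjoint. Here the reductions commute at the combinatorial level, and performing them in either order yields evaluations of $\mu^\kappa$ on strictly shorter tuples to which the inductive hypothesis applies. What requires genuine care is the sign exponent $s = nl(|p_1|+\dots+|p_i|+k-i)$: after applying the first reduction, the indices and arguments seen by the second one shift, but the factor $nl$ from the first reduction affects the sum $|p_1|+\dots+|p_{i'}|+k'-i'$ of the second in a way that exactly cancels the change, producing the same total sign for either order. This is a direct but bookkeeping-heavy verification.

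The main obstacle is the overlapping case. Because the shared interior forces both reductions to use the same cycle $c$, the only freedom is whether $R'$ extends one step further to the left or right than $R$. I would rule out both possibilities using conditions (A) and (B). For a one-step left extension, $R'$ would absorb an additional arrow $b_0$ preceding $b_1$ into the cycle; either $p_i$ ends in $b_0$, in which case $p_i b_1$ contains the consecutive-cycle subpath $b_0 b_1$, which vanishes in $\cQA(\qpol)$ and so contradicts (A); or $p_i$ is the vertex $h(b_0)$ at the boundary $i=1$, which is forbidden by (B); or $b_0$ already appears as a standalone preceding argument, in which case the original $R$ itself was not of the stipulated form. A symmetric argument at position $i+1$ eliminates right-extensions using (A) on $b_l p_{i+1}$ and (B) when $i+1=k$. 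Hence overlapping reductions must coincide, and combined with the disjoint case this gives confluence, closing the induction and establishing well-definedness of $\mu^\kappa_u$.
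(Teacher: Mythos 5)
Your high-level strategy (strong induction on arrow length, then confluence of competing reductions) parallels the paper's, and your observation that the interior single-arrow entries determine the face cycle is correct and important. But the case division ``disjoint / overlapping'' misses the crucial \emph{adjacent} configuration, in which the two windows share exactly one boundary position carrying a composite path of the form $a_r\,p\,b_1$ that one reduction reads as $a_r\cdot(pb_1)$ and the other as $(a_rp)\cdot b_1$. This is one of only two cases the paper checks explicitly. It is not disjoint (the windows do intersect) and it is not covered by your ``shared interior'' argument (the intersection contains no interior arrows at all, so the forced-cycle reasoning does not kick in). Here the reductions do commute, but the sign bookkeeping is different from the disjoint case and must be verified separately.

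Your treatment of the genuinely overlapping case is also not quite right as stated. A valid window always has exactly $nl$ entries, so a ``one-step extension'' changes the window length by $1$, which is not a multiple of $l$ and hence is not a competing reduction to begin with. What you actually need to rule out is a shift by a multiple of $l$ (possibly cyclically rotating the face cycle and changing the power $n$); this requires the observation that the maximal run of consecutive single-arrow entries through a fixed interior arrow pins down both where $b_1$ sits and the power $n$ --- which is what the paper compresses into the phrase ``property B will determine the power.'' Finally, you acknowledge but defer the sign verification for the disjoint case, yet this is the most delicate part of the argument: it relies on the parity fact that the total degree of a positive cycle of length $r$ in $\rect\qpol$ is congruent to $r$ modulo $2$, a consequence of the grading scheme from the rectification construction, and without invoking that parity the signs from the two orders of reduction do not visibly match.
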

\begin{proof}
We have to check that if there are two positive cycles $\alpha_1\dots \alpha_r$ and $\beta_1\dots \beta_s$ that can be used to reduce 
the product, the end result will not depend on the order of the reduction. 
If we look at an entry that is equal to an arrow, we can uniquely identify the cycle we need to reduce this
arrow away. This is because each arrow sits in just one positive cycle. 

If we have two cycles that can be reduced, our big product either looks like
\[
  \genmu_{r+s+k-4}(\rho_1,\dots, \rho_{i}\alpha_1,\alpha_{2},\dots, \alpha_{r-1},\alpha_r\rho_{i+1},\dots,\rho_{j}\beta_1,\beta_{2},\dots \beta_{s-1},\beta_s\rho_{j+1},\dots, \rho_k) 
\]
or
\[
  \genmu_{r+s+l-4}(\rho_1,\dots, \rho_{i}\alpha_1,\alpha_{2},\dots, \alpha_{r-1},\alpha_r\rho_{i+1}\beta_1,\beta_{2},\dots \beta_{s-1},\beta_s\rho_{i+2},\dots, \rho_k) 
\]
In both cases it is clear that, up to a sign, first reducing $\alpha_1\dots \alpha_r$ and then $\beta_1\dots \beta_s$  
gives the same result as reducing in 
the opposite order. To show that the signs are the same, let us look at the first $\genmu_{r+s+l-4}$. 
If we first reduce $\alpha_1\dots \alpha_r$ and then $\beta_1\dots \beta_s$, we get a total sign 
\[
s_{tot} =  r(\rho_1+\dots + \rho_{i} + k-i-1+s-2) + s(\rho_1+\dots + \rho_{j} +k-j-1).
\]
If we do it the other way round we get
\[
s_{tot} =   s(\rho_1+\dots + \rho_{i} + \alpha_1+\dots + \alpha_r +k-j-1) + r(\rho_1+\dots + \rho_{i} + k-i-1).
\]
Because of the construction of the gradation in section \ref{rect}, $\alpha_1\dots \alpha_r$ 
has odd degree if and only if $r$ is odd. 
Therefore $s(\alpha_1+\dots +\alpha_r)+r(s-2)=0\mod 2$.
The second sign calculation is similar.
\end{proof}

\begin{lemma}\label{rules}
The $\genmu_u$, $u>2$ have the following properties: 
For any paths $\rho_1,\dots,\rho_u$ we have 
\begin{enumerate}
 \item $\genmu(\rho_1,\dots ,\rho_u)$ is homotopic to $\rho_1 \cdots \rho_u$ viewed as a path in $|\qpol|\setminus \qpol_0$, i.e. the surface in which the original quiver is embedded with the original vertices removed.
 \item If $\genmu(\rho_1,\dots, \rho_u)=\pm \sigma$ is not a trivial path then either $\rho_1:=\sigma\rho_1'$ or $\rho_u=\rho_u'\sigma$. 
In the first case for any 
$\sigma'$ such that $\sigma'\rho'_1\ne 0$ we have $\genmu(\sigma'\rho'_1,\dots, \rho_u)=\sigma'\genmu(\rho'_1,\dots, \rho_u)$
 in the second we have 
$\genmu(\rho_1,\dots, \rho'_u\sigma')=\mu(\rho_1,\dots, \rho'_u)\sigma'$ if $\rho_u'\sigma'\ne 0$.
 \item $\genmu(\rho_1,\dots ,\rho_u)=0$ if $\rho_{i}\rho_{i+1}\ne 0$ for some $i<u$ or in particular when $\rho_i$ is trivial.
\end{enumerate}
\end{lemma}
\begin{proof}
We prove this by induction on the number of cycles we need to reduce $\genmu_u$ to an ordinary product.
\begin{enumerate}
\item[1]
The first statement clearly holds for $\genmu_2$ and by induction for higher multiplications because any positive cycle 
$\beta_1\cdots \beta_l$ is contractible in 
$|\qpol|\setminus \qpol_0$.
\item[2] 
If there is just one cycle then $\genmu(\rho_1\beta_1,\dots,\beta_l\rho_2)=\rho_1\rho_2$ but $\beta_1$ and $\beta_l$ 
sit in 2 different negatives cycle so $\rho_1\rho_2$ is zero unless one of the 2 paths has length zero.
It is also clear that if $\rho_1$ has nonzero length then $\genmu(\sigma\rho_1\beta_1,\dots,\beta_l\rho_2)=\sigma \rho_1$ 
and a similar statement holds for nontrivial $\rho_2$.

If the statement holds up to $k$ reductions then it also holds for $k+1$ reductions because the first and last path
in $\genmu$ after a reduction are subpaths starting from the outer ends of the first and last paths of the unreduced $\genmu$.
\item[3]
If there is just one cycle and $\genmu(\rho_1\beta_1,\dots,\beta_l\rho_2)\ne 0$ then $\beta_i\beta_{i+1}=0$.  
If the statement holds up to $k$ reductions then it also holds for $k+1$ reductions because $\beta_{i}\beta_{i+1}=0$ for 
any positive cycle.
\end{enumerate}
\end{proof}

\begin{proposition}\label{welldef}
The products $\genmu$ turn $\cQA(\rect\qpol)$ into an $\cA_{\infty}$-category. 
\end{proposition}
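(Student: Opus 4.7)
The plan is to verify the $\cA_\infty$-identities $[\mathrm{M}_k]$ directly on composable inputs $f_1,\dots,f_k$. The cases $k=1$ (where $\mu_1=0$) and $k=2$ (ordinary associativity of $\cQA(\qpol)$) are immediate, so I focus on $k\ge 3$. Using Lemma \ref{rules}, I would first classify the nonzero contributions to the $[\mathrm{M}_k]$ sum: a nested summand $\mu(f_1,\dots,f_s,\mu(f_{s+1},\dots,f_{s+l}),f_{s+l+1},\dots,f_k)$ is nonzero only when both the inner and outer $\mu$ reduce according to the defining rule of $\mu^\kappa$. Each such reduction is determined by a choice of positive cycle and a power inserted at a compatible position in the input sequence, so nonzero summands correspond to pairs (outer cycle $c_{\rm out}$ at power $n$, inner cycle $c_{\rm in}$ at power $m$) together with their insertion positions; here one of the two ``cycles'' may be the trivial datum corresponding to $\mu_2$ acting as ordinary composition.

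The heart of the argument is to pair these nonzero summands via a sign-reversing involution. Two natural mechanisms produce such pairs: (i) a single unit of cycle consumption can be shifted between the inner and outer $\mu$ by widening or shrinking the inner bracket so as to include or exclude a full traversal of the cycle, and (ii) when the inner $\mu$ is $\mu_2$ and the outer consumes a cycle, reassociation effectively swaps the roles of inner and outer. I would check that these operations exhaust all nonzero summands and come in canonically paired configurations, with boundary cases (such as the inner bracket touching an endpoint or two cycle-consumption regions meeting) matching the associativity identity in $\cQA(\qpol)$.

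The main obstacle will be the sign bookkeeping. The Koszul sign $(-1)^\sigma$ with $\sigma = s+lt+(2-l)(\deg f_1 + \cdots + \deg f_k)$ must combine with the internal $(-1)^s$ signs in the definition of $\mu^\kappa$, which are governed by $nl$ and by $\deg p_1 + \cdots + \deg p_i + k-i$, so that each paired contribution cancels. The parity observation already used in the well-definedness lemma above --- that a length-$r$ subpath of a positive cycle has degree parity $r \pmod 2$, a feature of the rectified-dimer grading of Section \ref{rect} --- should be the essential input that makes all signs align, and I expect most of the technical work to lie precisely in this accounting.
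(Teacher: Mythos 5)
Your plan is a genuinely different route from the paper's. The paper does \emph{not} construct a sign-reversing involution on the terms of $[\mathrm{M}_K]$; instead it argues by induction on the length of the input sequence: given a sequence containing a reducible cycle $a_1,\dots,a_{nl}$, it shows that every nonzero summand of $[\mathrm{M}_{k+nl-2}]$ applied to the long sequence $p_1,\dots,p_ia_1,a_2,\dots,a_{nl}p_{i+1},\dots,p_k$ equals $\pm\kappa(a_1\dots a_l,n)$ times a corresponding summand of $[\mathrm{M}_k]$ applied to the shortened sequence $p_1,\dots,p_k$, and that this correspondence is a bijection. The identity for the long sequence is therefore a scalar multiple of the identity for the short one, which holds by the induction hypothesis, the base being associativity at $k=3$. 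This ``factor out the cycle'' argument buys you a clean reduction to associativity without ever having to exhibit the cancellation as a pairing.

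Your involution strategy has concrete gaps as stated. First, the two mechanisms you list do not obviously exhaust the pairings. Consider a sequence containing two disjoint reducible cycles $a$ and $b$; the two surviving summands have (inner reduces $a$, outer reduces $b$) versus (inner reduces $b$, outer reduces $a$). This is not obtained by ``widening or shrinking the inner bracket'' nor by reassociating a $\mu_2$ -- it is a swap of which cycle the inner $\mu$ handles, and the inner bracket actually moves to a disjoint position. Second, and more seriously, you are underestimating what ``classify the nonzero contributions'' requires. Most of the labour in the paper's proof is devoted to showing that summands where the inner $\mu$ sits strictly inside the cycle, or straddles it without containing it, are \emph{identically zero} (because the inner $\mu$ would have to produce a trivial path, which Lemma \ref{rules} then forces to kill the outer $\mu$, or because condition B in the definition of $\mu^\kappa$ is violated). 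Without that case analysis -- which is where the actual work is -- you cannot even write down the set on which your involution is supposed to act. Finally, a minor slip: the case $k=2$ is not ``ordinary associativity''; $[\mathrm{M}_2]$ is the Leibniz rule and holds because $\mu_1=0$, while associativity is $[\mathrm{M}_3]$, which is also the genuine base case of the paper's induction.
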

\begin{proof}
We will show that the identity $[M_k]$
\[
\sum_{s+r+t=k}(-1)^{s+rt+(2-r)(\rho_1+\dots +\rho_s)}\genmu(\rho_1,\dots \rho_{s},\genmu_r(\rho_{s+1},\dots,\rho_{s+r}),\rho_{s+r+1}\dots, \rho_k)=0
\]
holds for all possible collections of paths $\rho_1,\dots,\rho_k$ with $t(\rho_i)=h(\rho_{i+1})$.
We will do this using induction on $k$.

For $k\le 3$ the identities hold because $\cQA(\qpol)$ is an associative algebra with zero differential.
Suppose now that the identity $[M_j]$ holds for all $j<k$.

Because $k>3$, every term in $M_k$ will have at least one higher order multiplication in it. So if 
there is a nonzero term, there is at least one cycle we can reduce, so we can assume that the sequence of paths looks like
\[
 \rho_1,\dots,\rho_i\beta_1,\beta_2,\dots, \beta_{l}\rho_{i+1},\dots, \rho_k.
\]
Suppose now that we have a term that is nonzero, then the inner $\genmu$ must act on at least one entry containing a $\rho$.

If the inner $\genmu$ overlaps partially with the cycle then we can consider two situations:
\begin{itemize}
\item[A]
If the outer $\genmu$ is a higher multiplication we get expressions like
\[
 \genmu(\dots, \genmu(\dots,\beta_j),\beta_{j+1}, \dots ) \text{ or } \genmu(\dots, \beta_j,\genmu(\beta_{j+1},\dots), \dots ).
\]
In the first case $\genmu(\dots,\beta_j)$ must evaluate to something ending in $\beta_j$
otherwise we cannot reduce $\beta_{j+1}$. 

If $j\ge 2$, $\genmu(\dots,\beta_j)$ we first reduce $\genmu(\dots,\beta_j)$ until we
get to the reduction that will remove $\beta_{j-1}$. If this reduction does not remove $\beta_j$ then 
we end up with a higher order $\genmu$ that contains a trivial path which contradicts lemma \ref{rules} because we assumed the term was nonzero.
If this reduction does remove $\beta_j$ the only way we could get something nonzero is if the situation looked like
\[
 \genmu(\sigma\beta_{j+1},\beta_{j+2},\dots,\beta_j) \to \genmu(\sigma,t(\beta_j)) =\sigma.
\]
But as $\sigma$ must end in $\beta_j$ we have that $\sigma\beta_{j+1}=0$. 
Hence if the term is nonzero we must assume $j=1$ and 
then for the same reason no reduction for the inner $\genmu$ can reduce $\beta_1$. This implies we are in case C4 of the summary below.

Similarly for $\genmu(\dots, \beta_j,\genmu(\beta_{j+1},\dots), \dots )$ we have that $j>l-2$ 
and no inner reduction can reduce $\beta_{l}$. This implies we are in case C3 of the summary below.
\item[B]
If the outer $\genmu$ is the ordinary multiplication then it looks like
\[
 \genmu(\genmu(\dots,\beta_{l-2} ,\beta_{l-1}),\beta_{l}\rho_k)
\]
or a similar expression with the inner $\genmu$ on the right.
Now reduce the inner $\genmu$ until we get to the reduction that gets rid of $\beta_{l-2}$. It must also
remove $\beta_{l-1}$ because otherwise we get a higher multiplication with a trivial path. This implies that the nonzero term looks like
\[
 \genmu(\genmu(\sigma_1\beta_{l}\sigma_2,\dots,\sigma_u\beta_1,\dots,\beta_{l-2} ,\beta_{l-1}),\beta_{l}\rho_k)
\]
(The inner reduction first gets rid of $\sigma_2,\dots,\sigma_u$ and after that there can only be $l$ terms 
in the inner $\genmu$
because otherwise reducing $\beta_{l}\dots \beta_{l-1}$ will give a higher multiplication with a trivial path).

If $\sigma_2,\dots,\sigma_u$ is nontrivial, we pick instead of $\beta_{1}\dots \beta_{l}$ a cycle in there. 
For this cycle case B can never occur.

If $\sigma_2,\dots,\sigma_u$ is trivial our sequence of paths looks like
$$\rho_1\beta_{l},\beta_1,\beta_2,\dots,\beta_{l-1},\beta_l\rho_2 ~~(*).$$ 
It is easy to check that for such cases $[M_k]$ holds.
\end{itemize}

So, if our sequence is not of the form $(*)$ we can find a reducible cycle $\beta$ such that 
all nonzero terms fall in $4$ categories.
\begin{itemize}
 \item[C1] The inner $\genmu$ contains no part of the cycle $\beta_1,\dots,\beta_{l}$.
 \item[C2] The inner $\genmu$ contains the whole cycle $\beta_1,\dots,\beta_{l}$.
 \item[C3] The first entry of the inner $\genmu$ is $\beta_{l}\rho_{i+1}$ and the inner $\genmu$ evaluates to a right multiple of $\beta_{l}$.
 \item[C4] The last entry of the inner $\genmu$ is $\rho_{i}\beta_1$ and the inner $\genmu$ evaluates to a left multiple of $\beta_{1}$.
\end{itemize}
In each of these cases we can get rid of the $\beta's$ and the term equals
\[
\pm\genmu(\rho_1,\dots, \rho_s,\genmu(\rho_{s+1},\dots, \rho_{s+r}),\rho_{s+r+1},\dots,\rho_k)
\]
for the appropriate $s,r,t$. Different terms will give different simplified terms.
We get
that the expression $[M_{k+nl-2}]$ for $$ \rho_1,\dots,\rho_i\beta_1,\beta_2,\dots, \beta_{l}\rho_{i+1},\dots, \rho_k$$ is
$[M_k]$ for $\rho_1,\dots,\rho_k$ up to a sign.

The fact that the signs match up requires some computation. In particular we need to show that
the product of the sign in $[M_{k+nl-2}]$, the sign of before the outer $\genmu$ and the sign in $[M_k]$ do not
depend on $s,r,t$ and the $4$ different cases.

We will only do this calculation for the first case with $i<s$ (i.e. the cycle comes before $\genmu$)
\se{
&\underbrace{s+(l-2)+rt+(2-r)(\rho_1+\dots +\rho_s+\beta_1+\dots+\beta_{l})}_{[M_{k+l-2}]} \\
&+\underbrace{s+rt+(2-r)(\rho_1+\dots +\rho_s)}_{M_k} + \underbrace{l(\rho_1+\dots + \rho_i)+s+t+1-i}_{\kappa}
\\&=(l-\cancel 2) + (\cancel 2-r)(\underbrace{\beta_1+\dots+\beta_{l}}_{=l}) + l(s+t+1-i+\rho_1+\dots+\rho_i)
\\&= l(\underbrace{r+s+t}_{=k}+1 - i +\rho_1+\dots+\rho_i)\mod 2
}
and the second case (when the cycle is inside the inner $\genmu$: $s+t>i>s$)
\se{
&\underbrace{s+(r+l-\cancel 2)t+(\cancel 2-r-l+\cancel 2)(\rho_1+\dots +\rho_s)}_{[M_{k+l-2}]} \\
&+ \underbrace{s+rt+(\cancel 2-r)(\rho_1+\dots +\rho_s)}_{M_k} + \underbrace{l(\rho_{s+1}+\dots + \rho_i)+r-i+s}_{\kappa}
\\&=lt - l(\rho_1+\dots +\rho_s) + l(r-i+s+\rho_{s+1}+\dots+\rho_i)
\\&= l(\underbrace{r+s+t}_{=k}+1 - i +\rho_1+\dots+\rho_i)\mod 2
}
\end{proof}

\subsection{Identifying the $\cA_{\infty}$-structure on $\cQA(\rect\qpol)$}

In this section we will investigate how one can recognize $\genmu$ up to isomorpism or $\cA_\infty$-isomorphism.

\begin{lemma}\label{cyclic}
If $\mu$ is an $\cA_\infty$-structure on $\cQA(\rect\qpol)$ 
such that for every positive cycle $\beta_1\dots \beta_l$ we have
$$\mu(\beta_i,\dots,\beta_{j})=\begin{cases}
\kappa_{\beta,i} h(\beta_i)&j-i+1=l\\
0&\text{ otherwise,}                        
                        \end{cases}
$$
for some nonzero constants $\kappa_{\beta,i}\in \C$
then $\mu$ is isomorphic to an $\cA_\infty$-structure $\mu'$ satisfying the same equalities but with all $\kappa_{\beta,i}=1$.
\end{lemma}
\begin{proof}
From the identity $[M_{l+1}]$ applied to the paths $\beta_1,\dots,\beta_{l},\beta_1$:
\[
 \mu(\beta_1,\mu(\beta_2,\dots, \beta_{l},\beta_1)) - \mu(\mu(\beta_1,\dots, \beta_{l}),\beta_1)=0.
\]
we can conclude that $\kappa_{\beta,i}=\kappa_{\beta,j}$ for all $i,j \in \Z$.

Now for each positive cycle we rescale the arrow $\beta_1$ to $\beta_1'=\beta_1/\kappa_{\beta,1}$.
Because every arrow $\beta$ sits in just one positive cycle it is clear that
$\mu(\beta_1',\beta_2,\dots,\beta_{j}) = h(\beta_1')$. The rescaling algebra-isomorphism 
$\phi \in \Aut(\cQA(\rect \qpol))$ allows us to construct $\mu' := \phi^{-1}\mu\circ \phi^{\otimes k}$, which satisfies the required property. 
\end{proof}

\begin{theorem}\label{uptoiso}
Suppose that $\mu$ is an $\cA_\infty$-stucture on $\cQA(\rect \qpol)$ such that for each
sequence of paths $\rho_1,\dots,\rho_k$ we can find a $\lambda\in \C^*$ with 
\[
\mu(\rho_1,\dots,\rho_k)= \lambda\genmu(\rho_1,\dots,\rho_k),
\]
then $\mu$ is isomorphic to $\genmu$.
\end{theorem}
\begin{proof}
By the previous lemma we can assume that $$\mu(\beta_i,\dots,\beta_{i+l-1})=h(\beta_i)=\genmu(\beta_i,\dots,\beta_{i+l-1})$$ 
for each positive cycle.
In this case we will prove that $\mu_k=\genmu_k$ by induction on $k$.
If $k$ is $2$ the statement is true because $\genmu$ and $\mu$ are both $\cA_\infty$-structures on $\cQA(\rect \qpol)$.

If $k>2$ then we will distinguish 2 cases
\begin{itemize}
 \item If $\genmu(\beta_1,\dots,\beta_l\rho)\ne 0$ then the axiom $[M_{l+1}]$  implies
$$
\genmu(\beta_1,\dots,\beta_l\rho) \pm \genmu(\beta_1,\dots,\beta_l)\rho =0.
$$
For $\mu$ the axiom $[M_{l+1}]$ has the same nonzero terms as for $\genmu$ because terms are scalar multiples of each other.
The right $\genmu$-term is equal to its $\mu$-equivalent by assumption, hence we 
get equality of the left terms: so $\genmu(\beta_1,\dots,\beta_l\rho)=\mu(\beta_1,\dots,\beta_l\rho).$ 
Similarly we get $\genmu(\rho,\beta_1,\dots,\beta_l)=\mu(\rho,\beta_1,\dots,\beta_l)$. 
\item
If $\genmu(\rho_1,\dots,\rho_i\beta_1,\dots,\beta_l\rho_{i+1},\dots,\rho_k)\ne 0$, we can assume
that either $\rho_i$ or $\rho_{i+1}$ is not trivial, otherwise the reduced expression will contain two trivial paths
and will be zero unless it is an ordinary multiplication. In that case the unreduced expression is $\genmu(\beta_1,\dots,\beta_{l})$,
so our initial assumption implies the statement.

If $\rho_i$ is nontrivial we will apply $[M_{k+l-1}]$ to $\rho_1,\dots,\rho_i,\beta_1,\dots,\beta_l\rho_{i+1},\dots,\rho_k$.
The only terms in the $\genmu$-version of $[M_{k+l-1}]$  that do not fall under the induction hypothesis are 
$$\genmu_2(\genmu(\dots),\rho_k),~~ \genmu_2(\rho_1,\genmu(\dots)),~~ \genmu(\dots,\genmu_2(\dots), \dots).$$

By lemma \ref{rules}[3] the first term is zero because $\rho_i\beta_1\ne 0$.
The second term can only be nonzero if $i=1$ and $\genmu(\dots)=\genmu(\beta_1,\dots,\beta_l\rho_k)$ 
which is equal to its $\mu$-version by the previous case. For the last terms, lemma \ref{rules} implies that the only combination of 2 consecutive terms for which the ordinary product is nonzero is $\rho_i,\beta_1$.

All nonzero terms in $[M_{k+l-1}]$ except $$\genmu(\rho_1,\dots,\rho_i\beta_1,\dots,\beta_l\rho_{i+1},\dots,\rho_k)$$
are equal to their $\mu$-versions by the induction hypothesis 
and therefore this last term is also equal to its $\mu$-version.
\end{itemize}
\end{proof}

\begin{theorem}\label{uptoaiso}
Let $\qpol$ be a dimer and $\cQA(\rect\qpol)$ be the gentle category coming from its rectified quiver $\rect\qpol$. 
Suppose $\rect\qpol$ is well-behaved and $\deg_X$ is a $\Z$-grading on $\cQA(\rect\qpol)$ as in \ref{Xgrading}. If $\mu$ is a $\deg_X$-graded $\cA_\infty$-structure 
on $\cQA_X(\rect\qpol)$ such that for every positive cycle $\beta_1\dots \beta_l$ we have
$$\mu(\beta_i,\dots,\beta_{j})=\begin{cases}
h(\beta_i)&j-i+1=l\\
0&\text{ otherwise,}                        
                        \end{cases}
$$
then $\mu$ is $\cA_\infty$-isomorphic to $\genmu$.
\end{theorem}
\begin{proof}
Clearly the $\genmu$ is also homogeneous for $\deg$ because by the condition
above every positive cycle $\beta_1\dots \beta_l$ has degree $2-l$.

In order to prove that $\genmu$ and $\mu$ are $\cA_\infty$-isomorphic we construct a $\cA_{\infty}$-functor 
$\cF$ with $\cF_1=\id$.
We show that we can do this by constructing the $\cF_i$ one at a time.
Suppose we have constructed $\cF_i$ for $i<r$.

Now look at the identity $[F_n]$. We already know it is of the form
\[
 d\cF_n = \Psi
\]
with $d\Psi=0$ and $\Psi$. In order to show that we can find a $\cF_n$ we have 
to show that $\Psi\in \Image  d$ or equivalently that the homology
class of $\Psi$ is zero. If we prove that $\Psi(\beta_{i},\dots, \beta_{i+vl-1})$ and $\Psi(\beta_{i},\dots, \beta_{i+vl})$ 
contain no length $0$ or length $1$ terms, then by lemma \ref{whenhomzero} we are done. 

The $\deg_X$-degree of $\Psi$ is $1-n$ and for $n=vl-1$ we get that  
$$\deg_X \Psi(\beta_{i},\dots, \beta_{i+vl-1})= 1-(vl-1) +v(l-2)=2(v-1)$$
This can only contain a length $\le 1$ term if $v=1$. Note that we used the fact that the total degree of the cycle $\beta_1\dots\beta_{l}$ is $l-2$. 

Similarly if $n=vl$ then
$$\deg_X \Psi(\beta_{i},\dots, \beta_{i+vl})= 1-(vl) +v(l-2)+\deg_X \beta_i$$ which is even but all length $1$ terms have odd degree. Moreover because 
$h(\beta_i)\ne t(\beta_i)$ a length zero term is also impossible.

So we can assume $v=1$ and $n=l-1$.
The terms in the expression $\Psi(\beta_i,\dots,\beta_{i+l-1})$ are of the form
\begin{itemize}
\item $\mu(\beta_{i},\dots,\beta_{i+l-1})-\genmu(\beta_{i},\dots,\beta_{i+l-1})$.
This expression is zero by the assumptions on $\mu$.
\item
$\cF(\beta_i,\dots ,\beta_{j},\mu(\beta_{j+1},\dots,\beta_{u}),\beta_{u+1},\dots,\beta_{r})$. Such a term is zero because 
by the condition in the theorem $\mu(\beta_{j+1},\dots,\beta_{u})$ can only be nonzero if $u-j+1=l$.
\item
$\genmu(\cF(\beta_1,\dots, \beta_{i_1}),\dots, \cF(\beta_{i_j},\dots, \beta_{r}))$.  
If $\cF(\beta_{i},\dots, \beta_{i+s})$ is nonzero then by the fact that $\rect\qpol$ is well-behaved
$\cF(\beta_{i},\dots, \beta_{i+s})$ can only contain a length one term if $s=0 \mod l$.  
In other words no terms have length 1. By construction $\genmu$ of such an expression is zero,  because there is no position where we can start to reduce it.
\end{itemize}
\end{proof}

\section{Appendix on the construction of the wrapped Fukaya category\\~\\ Mohammed Abouzaid}\label{appendixabouzaid}
\vspace{.5cm}
\newcommand{\bZ}{{\mathbb Z}}
\newcommand{\Spin}{\operatorname{Spin}}
\def\co{\colon\thinspace}
\renewcommand{\th}{${}^{\mathrm th}$}

\subsection{The wrapped Fukaya category}

We recall the construction of the wrapped Fukaya category: given a Liouville domain $M$ such that all Reeb orbits on $\partial M$ are non-degenerate, and a collection $(L_1, \cdots, L_{N})$ of exact Lagrangians with Legendrian boundary so that there is no Reeb chord of integral length starting on $\partial L_i$ and ending on $\partial L_j$ for any pair $(i,j)$, we choose a Hamiltonian $H$ on $M$ which agrees with the linear coordinate near $\partial M$, so that the intersection between the starting points of integral $X_{H}$-chords and their endpoints is empty.

In \cite{Abou2}, the above conditions are proved to be generic if the real dimension of $M$ is $4$ or greater. We assume that all Lagrangians are equipped with fixed $\Spin$ structures, which ensures that all Floer complexes and operations are defined over $\bZ$. The case when $M$ is a Riemann surface is recovered by a stabilisation procedure where we take the product with $T* S^1$, as described in \cite{Abou2}.  We require moreover that $2c_{1}(M) = 0$, and that we have a fixed quadratic complex volume form on $M$, as well as \emph{gradings} on the Lagrangians $L_i$. This implies that all Floer complexes are $\bZ$ graded, and that the $d$\th operation in the $A_{\infty}$ structure is homogeneous of degree $2 - d$. Stabilisation does not change the degree of generators of Floer complexes, so the argument we give apply in the surface case as well.

 In this case, we define
\begin{equation}
CW^*(L_i, L_j) \equiv \bigoplus_{w = 1}^{\infty} CF^*(L_i, L_j; w H)[\theta],
\end{equation}
where $\theta$ is a formal variable of degree $-1$ such that $\theta^2 = 0$, and $ CF^*(L_i, L_j; w H)  $ is the Lagrangian Floer complex generated by time-$w$ Hamiltonian chords of $H$ with endpoints on $L_i$ and $L_j$. The differential is the sum of three terms: the internal Floer differential of each summand, the identity as a map
\begin{equation}
CF^*(L_i, L_j; w H) \cdot \theta\to CF^*(L_i, L_j; w H),
\end{equation}
and the continuation map
\begin{equation}
\kappa \co  CF^*(L_i, L_j; w H) \cdot \theta\to CF^*(L_i, L_j; (w+1) H).
\end{equation}
To describe the $A_{\infty}$ structure, consider the natural direct sum decomposition of tensor powers of the wrapped Floer complex by the $\theta$-order. The terms which preserve the $\theta$-order are given by the usual $A_{\infty}$ operations counting pseudo-holomorphic maps from a $d$-sided polygon to $M$:
\begin{multline}\label{eq:a_infty_operation}
\mu^{d}_{CF} \co    CF^*(L_{i_{d-1}}, L_{i_d}; w_{d} H) \otimes  \cdots \otimes  CF^*(L_{i_{0}}, L_{i_{1}}; w_{1} H)  \\ \to CF^*(L_{i_{0}}, L_{i_{d}}; (w_{1} + \cdots + w_{d}) H),
\end{multline}
with the convention that $\theta$ Koszul commutes with $\mu^{d}_{CF}$. The remaining terms of the $A_{\infty}$ operations are obtained by counting solutions to parametrised equations called \emph{popsicle maps;} the only thing that we need to recall about these is:
\begin{lemma} \label{lem:popsicles_vanish}
  All moduli spaces of popsicle maps with inputs $(x_1, \ldots, x_d)$ and output $x_0$ have negative virtual dimension if
\begin{equation} \label{eq:degree_condition_homogeneous}
  \deg(x_0) \geq 2 - d + \sum \deg(x_i).
\end{equation}
In particular, if this condition holds, these moduli spaces are generically empty and do not contribute to the $A_{\infty}$ structure. \qed
\end{lemma}
\begin{proof}
Since the $\theta$-order of the output is at most $1$,  these moduli spaces contribute terms to the $A_{\infty}$ operations which strictly decrease the $\theta$-order.  The result follows immediately from the fact that $\theta$ has degree $-1$,  and that the degree of the $d$\th higher product is $2-d$.
\end{proof}

\subsection{In the absence of breaking}
\label{sec:topol-cond}

Let us now assume that all the Lagrangians we are considering are contractible. This implies that there is a unique way to concatenate a pair of homotopy classes of paths from $L_i$ to $L_j$ and from $L_j$ to $L_k$, to obtain a homotopy class of paths from $L_i$ to $L_j$. This operation is associative, and we write
\begin{equation}
[x_1] \star \cdots \star [x_d], 
\end{equation}
for the homotopy class obtained by concatenating chords $(x_1, \ldots, x_d)$, where $x_k$ is a time-$w_{k}$ Hamiltonian chord starting on $L_{i_k - 1}$ and ending on $L_{i_k}$. We now assume:
\begin{equation}
  \label{eq:topological}
\parbox{35em}{for each integer $w$, and each $w_{0} X_{H}$ chords $x_0$ satisfying $[x_0] = [x_1] \star \cdots \star [x_d] $, Equation \eqref{eq:degree_condition_homogeneous} is satisfied.}
\end{equation}

The first consequence of this assumption is that the differential on $CF^*(L_i, L_j ; w H) $ vanishes for any triple $(i,jw)$. Next, we observe:
\begin{lemma}
  The $A_{\infty}$ operation in Equation \eqref{eq:a_infty_operation} is independent of the choice of Floer data.
\end{lemma}
\begin{proof}
  Any two choices can be connected by a $1$-parameter family if Floer data, and the homotopy between the two operations is obtained by counting the corresponding parametrised moduli spaces which have virtual dimension $0$. The assumption implies that all such parametrised moduli spaces are empty for homotopical reasons, and the equation for a homotopy therefore implies that the operations are independent of Floer data.
\end{proof}

The same argument shows that the continuation map $\kappa$ strictly commute with the $A_\infty$ operations on Floer cochains, which we identify with Floer cohomology. We therefore obtain an $A_\infty$ structure on
\begin{equation}
\lim_{\kappa}  HF^{*}(L_i, L_j; w H) \equiv HW^*(L_i, L_j),
\end{equation}
and, by the above Lemma the operations obtained is strictly independent of the choice of Floer data.

Returning to the definition of the wrapped Fukaya category, Condition \eqref{eq:topological} and Lemma \ref{lem:popsicles_vanish} imply that there are no moduli spaces of popsicles maps of vanishing virtual dimension. The natural map
\begin{equation}
p \co CW^{*}(L_i, L_j) \to HW^*(L_i, L_j),
\end{equation}
which sends $\theta$ to $0$ is a (strict) $A_{\infty}$ homomorphism.  We conclude
\begin{proposition}\label{propabouzaid}
  If Condition \eqref{eq:topological} holds, the $A_{\infty}$ structure on wrapped Floer cohomology descends to cohomology as follows: 
\begin{equation}
\mu^{d}_{HW}(p(x_d), \cdots, p(x_1)) = p( \mu^{d}_{CF}( x_d, \ldots, x_1) ).
  \end{equation} \qed
\end{proposition}

\bibliographystyle{amsplain}
\def\cprime{$'$}
\providecommand{\bysame}{\leavevmode\hbox to3em{\hrulefill}\thinspace}
\providecommand{\MR}{\relax\ifhmode\unskip\space\fi MR }
\providecommand{\MRhref}[2]{%
  \href{http://www.ams.org/mathscinet-getitem?mr=#1}{#2}
}
\providecommand{\href}[2]{#2}

\end{document}